\documentclass[11pt, a4paper, final]{amsart}
\usepackage[T1]{fontenc}
\usepackage{lmodern}
\usepackage{mathtools}
\usepackage[utf8]{inputenc}

\usepackage{amsfonts}
\usepackage{amsmath}
\usepackage{amssymb}
\usepackage{amsthm}
\usepackage{mathtools}
\usepackage{paralist, tabularx}
\usepackage[utf8]{inputenc}
\usepackage{mathabx,epsfig}
\usepackage{tikz-cd}
\usepackage{verbatim}

\usepackage{enumitem}
\setenumerate{left=0em}

\usepackage{etoolbox}

\usepackage{xcolor}
\definecolor{green}{RGB}{0,127,0}
\definecolor{redd}{RGB}{191,0,0}
\definecolor{red}{RGB}{105,89,205}
\usepackage[colorlinks=true]{hyperref}

\usepackage[notref, notcite]{showkeys}
\usepackage[cmtip,arrow]{xy}

\DeclareMathOperator{\Aut}{Aut}
\DeclareMathOperator{\Autf}{Autf}

\newcommand{\C}{\mathfrak{C}}

\newcommand{\nref}[2]{\hyperref[#1]{\ref*{#1}$_{#2}$}}

\newcommand{\CB}{{\mathcal B}}
\newcommand{\CF}{{\mathcal F}}
\newcommand{\CP}{{\mathcal P}}
\newcommand{\CA}{{\mathcal A}}

\DeclareMathOperator{\im}{{Im}}

\DeclareMathOperator{\Th}{{Th}}

\DeclareMathOperator{\tp}{{tp}}
\DeclareMathOperator{\cl}{{cl}}
\DeclareMathOperator{\dcl}{{dcl}}
\DeclareMathOperator{\Inv}{Inv}
\DeclareMathOperator{\id}{id}
\DeclareMathOperator{\Gal}{Gal}
\DeclareMathOperator{\ev}{ev}
\DeclareMathOperator{\Age}{Age}
\DeclareMathOperator{\Def}{Def}
\DeclareMathOperator{\Extdef}{Extdef}

\def\invlim{\underleftarrow{\lim}}

\newtheorem{theorem}{Theorem}
\numberwithin{theorem}{section}
\newtheorem{lemma}[theorem]{Lemma}
\newtheorem{fact}[theorem]{Fact}
\newtheorem{proposition}[theorem]{Proposition}
\newtheorem{problem}[theorem]{Problem}

\newtheorem{question}[theorem]{Question}
\newtheorem{corollary}[theorem]{Corollary}

\newtheorem{clm}{Claim}
\newtheorem*{clm*}{Claim}

\newenvironment{clmproof}{
    
    \begin{proof}
    
}{
    \end{proof}
}

\theoremstyle{definition}
\newtheorem{definition}[theorem]{Definition}

\newtheorem{example}[theorem]{Example}

\theoremstyle{remark}
\newtheorem{remark}[theorem]{Remark}

\AtEndEnvironment{theorem}{\setcounter{clm}{0}}
\AtEndEnvironment{proposition}{\setcounter{clm}{0}}
\AtEndEnvironment{lemma}{\setcounter{clm}{0}}
\AtEndEnvironment{corollary}{\setcounter{clm}{0}}
\AtEndEnvironment{remark}{\setcounter{clm}{0}}

\usepackage[hyphenbreaks]{breakurl}

\usepackage[backend=bibtex,  sorting=nyt, style=alphabetic]{biblatex}
\usepackage{orcidlink}

\title{Ramsey theory and topological dynamics of 0-dimensional flows}

\author{Krzysztof Krupi\'{n}ski\ \orcidlink{0000-0002-2243-4411}}
\address{\parbox{\linewidth}{Instytut Matematyczny, Uniwersytet Wroc{\l}awski\\
pl. Grunwaldzki 2, 50-384 Wroc{\l}aw, Poland}}
\email{Krzysztof.Krupinski@math.uni.wroc.pl}

\author{Junguk Lee\ \orcidlink{0000-0002-2150-6145}}
\address{\parbox{\linewidth}{Department of Mathematics, Changwon National University\\
20 Changwondaehak-ro, Uichang-gu, Changwon-si, Gyeongsangnam-do, 51140, South Korea}}
\email{ljwhayo@changwon.ac.kr}

\author{Slavko Moconja\ \orcidlink{0000-0003-4095-8830}}
\address{\parbox{\linewidth}{University of Belgrade, Faculty of Mathematics\\
Studenski trg 16, 11158 Belgrade, Serbia}}
\email{slavko@matf.bg.ac.rs}

\thanks{\noindent The first author was supported by the Narodowe Centrum Nauki grants no.\ 2016/22/E/ST1/00450. The second author was supported by the National Research Foundation of Korea (NRF) grant funded by the Korea government (MSIT) (RS-2024-00451678). The third author was partially supported by the Ministry of  Science, Technological Development and Innovation, Republic of Serbia, through the project 451-03-33/2026-03/200104.}

\keywords{Definable Ramsey property, separately finite definable Ramsey degree, topological dynamics, 0-dimensional ambit, Ellis semigroup, Ellis group}
\subjclass[2020]{5D10, 37B02, 54H11, 20E18, 03C98, 03C45}

\begin{document}

\maketitle

\begin{abstract}
We introduce several Ramsey-theoretic properties of $0$-dimensional ambits and obtain their dynamical characterizations. In consequence, we obtain Ramsey-theoretic criteria for triviality and for profiniteness of some important invariants (in particular, of the Ellis groups) of the ambits in question. This yields a criterion for the structural property that each distal minimal factor of the given ambit is a profinite flow. Another result is a criterion for metrizability of the minimal left ideals in the Ellis semigroup. 

Then we study three specializations of the above abstract context: to first order theories, to definable groups, and to the classical Kechris-Pestov-Todor\v{c}evi\'{c} theory, recovering known and obtaining new results in each of these contexts.
\end{abstract}

\section{Introduction}

Structural Ramsey theory is an important field of research in combinatorics originated in the 1970s with fundamental contributions of Ne\v{s}et\v{r}il and R\"{o}dl \cite{NeRo}, which concerns colorings of copies of a finite structure inside another structure (often from a given Fra\"{i}ss\'e class). Since the early 2000s structural Ramsey theory has been seeing a flaring surge in interest, ignited by the milestone contribution of Kechris, Pestov, and Todor\v{c}evi\'{c} \cite{KPT}, connecting structural Ramsey theory to topological dynamics by discovering correspondences between Ramsey-theoretic properties of Fra\"{i}ss\'{e} classes and dynamical properties of the automorphism groups of their Fra\"{i}ss\'{e} limits. This inspired a whole new school and made the subject interesting not only in combinatorics but also in topological dynamics and descriptive set theory. KPT will be an abbreviation for Kechris-Pestov-Todor\v{c}evi\'{c}.

Our paper \cite{KLM} brought to the picture also model theory, yielding several correspondences between definable Ramsey-theoretic and dynamical properties of first order theories, resulting in combinatorial criteria for triviality and [pro]finiteness of important invariants in model theory, such as the Kim-Pillay Galois groups of first order theories, which in turn yields such criteria for equality of Kim-Pillay strong types, Shelah strong types, and usual types.

The main idea and goal of the present paper is to generalize the concept of definable Ramsey properties from \cite{KLM} to arbitrary $0$-dimensional ambits, proving correspondences between those kinds of properties and appropriate dynamical properties of the underlying ambits, finding further consequences, and showing that some of our ``definable'' Ramsey properties follow from suitable general dynamical assumptions on the underlying flows.
Among the main contributions, we get a Ramsey-theoretic characterization of profiniteness of the Ellis group of a $0$-dimensional ambit; notably each distal minimal flow whose Ellis group is profinite is a profinite flow (i.e.\ an inverse limit of finite flows), so this yields a Ramsey-theoretic criterion for profiniteness of the distal minimal factors of the given $0$-dimensional ambit. We also find a Ramsey-theoretic criterion for metrizability of minimal left ideals in the Ellis semigroup of a $0$-dimensional ambit.

The general context of $0$-dimensional ambits specializes to the context of first order theories from \cite{KLM} (mostly recovering the various notions and results from \cite{KLM}, but also yielding new ones), to definable groups acting on associated spaces of types (yielding new notions, results, and questions), and to the classical KPT context (yielding several known and some new notions, results, and questions). 

There have been many natural questions concerning relationships between the various properties that we introduce, both in the abstract context of $0$-dimensional ambits and in the above three specializations. This leads to some implications or equivalences but also to some interesting counterexamples. Some of our basic questions remain open, most of them with expected negative answers which should be obtained by constructing suitable counterexamples. Some of these questions will be stated in this paper.



\subsection{Results and structure of the paper}
Section \ref{section: main} contains the main results of this paper. It concerns arbitrary $0$-dimensional ambits and does not involve any model theory.

Let $(G,X,x_0)$ be a $0$-dimensional ambit and $S$ be a subset of $X$. By $E(X)$ we denote the Ellis semigroup of $X$.

In Subsection \ref{subsection: 3.1}, we introduce the notion of {\em $S$-definable Ramsey property} ($S$-DRP) for the ambit $(G,X,x_0)$, and we obtain its dynamical characterization: there exists an element $\eta \in E(X)$ such that $\eta[S]$ consists of invariant elements (see Definition \ref{definition: S-DERP} and Proposition \ref{prop: characteriazation_S-derp}). This is an adaptation to the abstract context of a similar result in the context of first order theories from \cite{KLM}. The most interesting cases are when $S=X$, $S=Gx_0$, and $S=\{x_0\}$. The corresponding $S$-DRP are called the {\em externally definable  Ramsey property} (EDRP), {\em definable  Ramsey property} (DRP), and {\em weakly definable  Ramsey property} (WDRP), respectively. We note that WDRP is equivalent to the existence of a fixed point in $X$. It follows from the definition that EDRP implies DRP implies WDRP. By \cite[Example 5.7]{KLM} and explanations in Subsection \ref{subsection: first order theories}, we know that that EDRP is strictly stronger that DRP. In Subsection \ref{subsection: 3.1}, we give criteria for EDRP to be equivalent to DRP and also for DRP to be equivalent to WDRP. We give an example showing that in general WDRP does not imply DRP.

In Subsection \ref{subsubsection: Ramsey and profiniteness}, we introduce the notion of $(G,X,x_0)$ having {\em separately finite $S$-definable  Ramsey degree} (sep.\ fin.\ $S$-DRdeg). When $S=X$, we call it {\em separately finite externally definable  Ramsey degree} (sep.\ fin.\ EDRdeg); for $S=Gx_0$ we call it {\em sep.\ fin.\ DRdeg}. One of the main results is the dynamical characterization of this property obtained in Theorem \ref{theorem: characterization_sep_fin_S-DERdeg} (see Corollary \ref{corollary: characterization_sep_fin_EDERdeg} in the case $S=X$). This implies Corollary \ref{corollary: Ellis group is profinite}, which tells us that sep.\ fin.\ EDRdeg implies that the Ellis group of the ambit $(G,X,x_0)$ is profinite, which in turn implies, using some fundamental results in topological dynamics, that every distal minimal flow which is a factor of $(G,X,x_0)$ is a profinite flow (see Corollary \ref{corollary: sep. fin. EDERdeg implies profiniteness of distal factors}). This is a structural consequence of having sep.\ fin.\ EDRdeg. Then we state several natural conditions between sep.\ fin.\ EDRdeg and profiniteness of the {\em Bohr compactification of $G$ below the ambit $(G,X,x_0)$} (see Definition \ref{definition: Bohr compactification below ambit} and Proposition \ref{proposition: relationships}). Proposition \ref{proposition: relationships} is a straightforward adaption to the abstract context of an analogous fact for first order theories from \cite{KLM}, except the Bohr compactifications which will be dealt with in a forthcoming paper. We discuss the missing implications between the aforementioned conditions. Some of them are left as questions, but, notably, in Section \ref{section: questions and examples}, we give an example showing that in the general abstract context condition (A) in Proposition \ref{proposition: relationships} does not imply (B); in the context of both first order theories and definable groups, this implication remains an open problem.

While most of the notions and results in Subsections \ref{subsection: 3.1} and \ref{subsubsection: Ramsey and profiniteness} are motivated by the corresponding considerations in \cite{KLM}, Subsection \ref{subsection: joint. fin. EDERdeg} is motivated directly by the classical notion of finite Ramsey degree of a Fra\"{i}ss\'{e} class and Zucker's characterization from \cite[Theorem 8.7]{Zuc} of metrizability of the universal minimal $\Aut(M)$-flow for a  Fra\"{i}ss\'{e} structure $M$ (see also \cite[Theorem 6.3]{KrPi2}). We introduce the notion of $(G,X,x_0)$ having {\em jointly finite $S$-definable  Ramsey degree} (joint.\ fin.\ $S$-DRdeg) and we prove its dynamical characterization which says that some/every minimal left ideal in $E(X)$ is finite. Next, we study a related but more delicate issue. For a fixed collection $\Sigma$ of clopens in $X$, we introduce the notion of {\em joint.\ fin.\ EDRdeg with respect to $\Sigma$} (see Definition \ref{definition: joint. fin. EDERdeg with respect to Sigma}), and, using it, in Lemma \ref{lemma: main lemma for metrizability} we give a criterion for a natural presentation of some left ideal in $E(X)$ as an inverse limit of finite spaces. This yields a criterion for metrizability of the minimal left ideals in $E(X)$ stated in Corollary \ref{corollary: criterion for metrizability}.

In Subsection \ref{subsection: examples}, we start from Proposition \ref{proposition: WAP implies finite RP} saying that each weakly almost periodic (WAP) $0$-dimensional ambit has sep.\ fin.\ EDRdeg. Then we find a condition for a Boolean $G$-algebra (i.e.\ closed under the left translations) $\mathcal{A}$ of subsets of a group $G$ which guarantees that $(G,S(\mathcal{A}),p_e)$ does not have sep.\ fin.\ DRdeg (where $S(\mathcal{A})$ is the Stone space of $\mathcal{A}$ and $p_e$ is the principal ultrafilter at $e$). Using it, we obtain Proposition \ref{proposition: No sep. fin DERdeg} which says that whenever $G$ is a (discrete) infinite abelian group, the universal $G$-ambit $(G,\beta G, p_e)$ does not have sep.\ fin.\ DRdeg. We state as a question whether abelianity could be dropped in this result which would be a strengthening of the well-known result that all discrete groups are not extremely amenable. Another question is whether the notions of having sep.\ fin.\ EDRdeg and DRdeg are equivalent.

Section \ref{section: specializations} analyzes three specializations of the abstract context developed in Section \ref{section: main}.

In Subsection \ref{subsection: first order theories}, we apply the abstract context to first order theories, i.e.\ to the $0$-dimensional ambit $(\Aut(\C),S_{\bar{c}}(\C),\tp(\bar c/\C))$, where $\C$ is a monster model of a given first order theory $T$, $\bar c$ is its enumeration, and $S_{\bar{c}}(\C)$ is the space of all complete types over $\C$ in variables $\bar x$ corresponding to $\bar c$ which extend $\tp(\bar c/\emptyset)$. We discuss a natural correspondence between definable colorings of elements of $\Aut(\C)$ in the sense of this paper and definable colorings of copies of finite tuples in $\C$ in the sense of \cite{KLM}. This yields a correspondence between the relevant Ramsey properties (see Proposition \ref{proposition: specialization of EDERP to theories}). All of this justifies our terminology involving [external] definability, shows that the results from Subsections \ref{subsection: 3.1} and \ref{subsubsection: Ramsey and profiniteness} imply the corresponding main results in \cite{KLM}, and provides lots of examples of ambits with the various Ramsey properties introduced in Section \ref{section: main}. On the other hand, Subsection \ref{subsection: joint. fin. EDERdeg} specializes to new notions and results for first order theories. Moreover, in Proposition \ref{proposition: under NIP DERP iff EDERP for theories}, we show that if $T$ has NIP, then EDRP is equivalent to DRP, answering a question asked by Itay Kaplan. Finally, Theorem \ref{theorem: improved Meir Sullivan} generalizes \cite[Theorem 5.1]{MeSu} to any uncountable structure $M$: $M$ has the externally definable Ramsey property (see the paragraph before Fact \ref{fact: Meur and Sullivan}) iff every $\Aut(M)$-subflow of every $S_n(M)$ has a fixed point. This result is a simple consequence of Corollary \ref{cor: amenability_derp}(3).

In Subsection \ref{subsection: definable groups}, we apply the abstract context to definable groups, i.e.\  to the $0$-dimensional ambits $(G,S_G(M),\tp(e/M))$ and $(G,S_{G,ext}(M),\tp_{ext}(e/M))$, where $G$ is a group definable in a structure $M$, $S_G(M)$ is the space of complete types over $M$ concentrated on $G$, and $S_{G,ext}(M)$ is the space of complete external types over $M$ concentrated on $G$ (i.e.\ ultrafilters on the Boolean algebra of externally definable subsets of $G$). In this context, Section \ref{section: main} specializes to new notions and theorems. Besides that we observe that stability implies that those two ambits have sep.\ fin.\ EDRdeg (note that they coincide by stability). Next, in Corollary \ref{corollary: DEERP iff EDERP under NIP for definable groups}, we get that if the theory of $M$ has NIP, then for each of the above two ambits the properties EDRP and DRP are equivalent. In Corollary \ref{corollary: equivalences of RP for definable groups}, we observe that (without any assumptions on $M$) for the ambit $(G,S_{G,ext}(M),\tp_{ext}(e/M))$ all three properties WDRP, DRP, EDRP are equivalent. For the ambit $(G,S_G(M),\tp(e/M))$, WDRP is equivalent to DRP, but Example \ref{example: DERP but not EDERP fr definable groups} shows that DRP does not imply EDRP. We also give an example showing that EDRP for $(G,S_G(M),\tp(e/M))$ does not imply EDRP for $(G,S_{G,ext}(M),\tp_{ext}(e/M))$, and we ask if the implication is true under NIP.

In Subsection \ref{subsection: KPT context}, we apply the abstract context to the classical KPT context, i.e.\ to the universal $\Aut(M)$-ambit $(\Aut(M),\mathcal{U},p_0)$, where $M$ is any first order structure and $\Aut(M)$ is its group of automorphisms equipped with the pointwise convergence topology. We use a very convenient model-theoretic description of this ambit from \cite[Theorem 2.2]{KrPi2}. We observe that each of the properties WDRP, DRP, EDRP coincides with $M$ having the embedding Ramsey property in the classical sense (see Definition \ref{definition: Ramsey property}(1)). Remark \ref{remark: sep. fin. ERdeg vs joint. fin. EDErdeg} recovers the classical notion of $M$ having sep.\ fin.\ embedding Ramsey degree (see Definition \ref{definition: Ramsey property}(2)) in terms of joint.\ fin.\ EDRdeg of $(\Aut(M),\mathcal{U},p_0)$ with respect to $\Sigma_i$, for all $i \in I$ for a certain canonical collection $\{\Sigma_i\}_{i\in I}$ of families of clopens in $\mathcal{U}$. Using this remark, Zucker's criterion for metrizability of the universal minimal $\Aut(M)$-flow follows from Corollary \ref{corollary: criterion for metrizability}. The main new notion from Section \ref{section: main} is sep.\ fin.\ EDRdeg for a $0$-dimensional ambit. Applied to $(\Aut(M),\mathcal{U},p_0)$, it is weaker than $M$ having sep.\ fin.\ embedding Ramsey degree and it seems to be strictly weaker, although we do not know any example confirming strictness. The specializations of the dynamical characterization from Corollary \ref{corollary: characterization_sep_fin_EDERdeg} and of the main consequences obtained in Corollaries \ref{corollary: Ellis group is profinite}, \ref{corollary: sep. fin. EDERdeg implies profiniteness of distal factors} are new results in the KPT context. An interesting summary is contained in Corollary \ref{corollary: main new result in KPT context}: sep.\ fin.\ EDRdeg of the universal ambit  $(\Aut(M),\mathcal{U},p_0)$ implies that every distal minimal $\Aut(M)$-flow is a profinite flow; in particular, if $M$ has sep.\ fin.\ embedding Ramsey degree, then every distal minimal $\Aut(M)$-flow is a profinite flow.

Section \ref{section: questions and examples} summarizes some aspects and discusses various natural questions. It also provides a counterexample to one of the missing implications in Proposition \ref{proposition: relationships} which was already mentioned above.

\section{Preliminaries}


We recall some notions and facts from topological dynamics. Regarding Ramsey theory, all the relevant notions will be defined in suitable places in the main body of the paper.


A {\em flow} is a pair $(G,X)$ where $G$ is a topological group acting continuously on a compact space $X$. An {\em ambit} is a flow $(G,X,x_0)$ with a distinguished point $x _0 \in X$ with dense $G$-orbit. If $G$ is discrete, the action being continuous just means that it is an action by homeomorphisms. A flow $(G,Y)$ is said to be a {\em factor} of the flow $(G,X)$ if there exists a $G$-flow epimorphism from $(G,X)$ to $(G,Y)$.

We discuss fundamental notions and results from Ellis theory. For the proofs see e.g.\ \cite{Gla} or \cite[Appendix A]{Rze}.

Let $(G,X)$ be a flow. Working in the space $X^X$ with the product topology, 
by identifying $g\in G$ with the function $X\to X$ given by $x\mapsto gx$ (by abusing notation, also denoted by $g$), one sees $G$ as a subset of $X^X$. The closure of $G$ forms a semigroup with composition as the semigroup operation. 
This semigroup is called the {\em  Ellis semigroup} of $X$ and is denoted by $E(X)$. The semigroup operation on $E(X)$ is continuous in the left coordinate but in general not in the right coordinate. $E(X)$ has a natural structure of a $G$-ambit with $\id_X$ as the distinguished point with dense orbit.

A minimal left ideal in $E(X)$ always exists and is the same thing as a minimal $G$-subflow of $E(X)$. Each minimal left ideal $\mathcal{M}$ in $E(X)$ contains at least one idempotent (an element $u$ such that $u^2=u$). In fact, $\mathcal{M}$ is the disjoint union of the sets $u\mathcal{M}$ for $u$ ranging over the idempotents in $\mathcal{M}$. Each subset $u\mathcal{M}$ is a group with respect to the restriction of the semigroup operation on $E(X)$. All these groups $u\mathcal{M}$ are isomorphic and the common isomorphism type of them (or any of these groups itself) will be called the {\em Ellis group of $X$}. 

On any Ellis group $u\mathcal{M}$, there is the so-called {\em $\tau$-topology} which is weaker and much more useful than the topology induced from $E(X)$. First, for $a\in E(X)$ and $B\subseteq E(X)$ let $a\circ B$ be the set of all limits of the nets $(g_ib_i)_i$ such that $g_i\in G$, $b_i\in B$, and $\lim_ig_i=a$. For $B\subseteq u\mathcal{M}$ we define $\cl_\tau(B)= u\mathcal{M}\cap(u\circ B)$.
$\cl_\tau$ is a closure operator on $u\mathcal{M}$; the {\em $\tau$-topology} is the topology on $u\mathcal{M}$ induced by $\cl_\tau$. The group $u\mathcal{M}$ with the $\tau$-topology is a quasi-compact,  $T_1$ semitopological group (i.e., group operation is separately continuous). The isomorphism types of the Ellis groups (for all minimal $\mathcal{M}$ and idempotents $u \in \mathcal{M}$) as semitopological groups do not depend on the choice of $u$ and $\mathcal{M}$.  Put $H(u\mathcal{M})=\bigcap_U\cl_\tau(U)$, where the intersection is taken over all $\tau$-open neighborhoods of $u$ in $u\mathcal{M}$. This is a $\tau$-closed normal subgroup of $u\mathcal{M}$, and the quotient $u\mathcal{M}/H(u\mathcal{M})$ is a compact, Hausdorff topological group. Moreover, $H(u\mathcal{M})$ is the smallest $\tau$-closed normal subgroup of $u\mathcal{M}$  such that $u\mathcal{M}/H(u\mathcal{M})$ is Hausdorff.

Now, we will state a few more specific results which will be useful in this paper.

\begin{remark}\label{remark: charact. of Inv(X)}
Suppose that $X$ is in addition totally disconnected.
Let $\Inv(X)$ be the set of fixed points in $X$. Then, for any $x\in X$, $x\in \Inv(X)$ if and only if for every clopen subset $U$ of $X$: $$x\in U\Leftrightarrow x\in\bigcap_{g\in G} gU.$$
\end{remark}
\begin{proof}
It is enough to show ($\Leftarrow$). Take $x\in X$. Suppose there exists $g\in G$ such that $gx\neq x$. Since $X$ is a profinite space, there exists a clopen subset $U$ of $X$ such that $x\in U$ and $gx\in U^c$ so that $x\in U\cap g^{-1}U^c$, which completes the proof.
\end{proof}

Let $\mathcal{A}$ be a Boolean $G$-algebra (shortly, $G$-algebra) of subsets of $G$, i.e.\ a Boolean algebra closed under the left translations by the elements of $G$. 
By $S(\mathcal{A})$ we denote the Stone space of $\mathcal{A}$. The group $G$ acts naturally on $S(\mathcal{A})$ by left translations, turning $S(\mathcal{A})$ into a $G$-ambit with the principal ultrafilter $p_e:=\{A \in \mathcal{A}: e \in A\}$ as the distinguished point. 
For $p \in S(\mathcal{A})$ the operator $d_p \colon \mathcal{A} \to \mathcal{P}(G)$ is defined by: 
$$d_p(A):=\{ g\in G: g^{-1}A \in p\}.$$ 
Each $d_p$ is a homomorphism of Boolean $G$-algebras. The algebra $\mathcal{A}$ is called {\em d-closed} if $d_p[\mathcal{A}]=\mathcal{A}$. The next fundamental fact essentially comes from Newelski (cf. \cite[Proposition 2.4]{Ne}); a full proof of all equivalences appeared in Micha\l\  Baran's Master's thesis written under the supervision of the first author; we leave it as an exercise.

\begin{fact}\label{fact: d-closed iff semigroup}
Let $\mathcal{A}$ be a $G$-algebra of subsets of $G$. The following conditions are equivalent:
\begin{enumerate}
\item $\mathcal{A}$ is d-closed.
\item There exists a left-continuous semigroup operation $*$ on $S(\mathcal{A})$ extending the action of $G$ (in the sense that for every $g \in G$ and $q \in S(\mathcal{A})$ we have $p_g * q=gq$, where $p_g:=\{A \in \mathcal{A}: g \in A\}$).
\item There exists a semigroup operation $*$ on $S(\mathcal{A})$ such that the map $l \colon S(\mathcal{A}) \to E(S(\mathcal{A}))$ given by $l(p)(q):=p*q$ is a topological isomorphism of semigroups which satisfies $l(p_g)(q)=gq$ for all $g \in G$ and $q \in S(\mathcal{A})$. (Then $*$ is left-continuous, extends the action of $G$, and $l$ is also an isomorphism of $G$-flows.)
\end{enumerate}
Moreover, in each of the items (1) and (2) we have uniqueness of $*$ with the required properties, and $*$ in (1) and (2) coincide.
\end{fact}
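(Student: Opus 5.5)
We plan to prove the cycle (1)$\Rightarrow$(2)$\Rightarrow$(3)$\Rightarrow$(1), together with the uniqueness clauses. Throughout, for $A\in\mathcal{A}$ we write $[A]$ for the corresponding basic clopen subset of $S(\mathcal{A})$. We also use that the action of $g$ on $S(\mathcal{A})$ is $gq=\{gA: A\in q\}$.

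\emph{(1)$\Rightarrow$(2).} Assuming $d_q[\mathcal{A}]\subseteq\mathcal{A}$ for all $q$, define
$$p*q:=\{A\in\mathcal{A}: d_q(A)\in p\}.$$
Since $d_q$ is a Boolean algebra homomorphism into $\mathcal{A}$, $p*q$ is an ultrafilter on $\mathcal{A}$.
\begin{itemize}
\item Extension of the action: $A\in p_g*q$ iff $g\in d_q(A)$ iff $g^{-1}A\in q$ iff $A\in gq$.
\item Left continuity: the preimage of $[A]$ under $p\mapsto p*q$ is $[d_q(A)]$, which is clopen.
\item Associativity: it reduces to the identity $d_{p*q}(A)=d_p(d_q(A))$. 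To check it, note that $g\in d_{p*q}(A)$ iff $d_q(g^{-1}A)\in p$. Since $d_q(g^{-1}A)=g^{-1}d_q(A)$ ($d_q$ commutes with translations), this holds iff $g^{-1}d_q(A)\in p$, i.e.\ iff $g\in d_p(d_q(A))$.
\end{itemize}

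\emph{Uniqueness in (2).} A left-continuous $*$ extending the action is determined by its values on the dense set $\{p_g:g\in G\}$ in the left variable, so it is unique. This also shows that any $*$ in (2) coincides with the one built from (1).

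\emph{(2)$\Rightarrow$(3).} Put $l(p)(q)=p*q$. By left continuity, $p\mapsto l(p)\in S(\mathcal{A})^{S(\mathcal{A})}$ is continuous. Its image contains $G$, and by compactness the image is closed, so it equals $E(S(\mathcal{A}))$. Injectivity follows by evaluating at $p_e$: we have $p*p_e=p$, because $p\mapsto p*p_e$ is continuous and agrees with the identity on the dense set of $p_g$'s (as $p_g*p_e=gp_e=p_g$). Hence $l$ is a continuous bijection from a compact space onto a Hausdorff space, so a homeomorphism. Associativity gives $l(p*q)=l(p)\circ l(q)$. Finally, $l(p_g)=g$ by the extension property.

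\emph{(3)$\Rightarrow$(1).} This is the main step. Given $l$ as in (3), we first note that $l(p)(p_e)=p$. Indeed, both sides are continuous in $p$, since $l$ is a homeomorphism and evaluation is continuous, and they agree at each $p_g$.

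Now fix $q$ and $A$. We compute $d_q(A)=\{g: g^{-1}A\in q\}=\{g: A\in gq\}$, and since $gq=l(p_g)(q)=p_g*q$, this equals $\{g:A\in p_g*q\}$. The map $p\mapsto p*q=l(p)(q)$ is continuous, so $U=\{p: A\in p*q\}$ is clopen, hence $U=[B]$ for some $B\in\mathcal{A}$. Then $d_q(A)=\{g: p_g\in[B]\}=B\in\mathcal{A}$, so $\mathcal{A}$ is d-closed.

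\emph{Remaining claims.} The same computation shows $A\in p*q$ iff $d_q(A)\in p$. This gives the required uniqueness in (1) and shows that the operation from (3) coincides with those in (1) and (2). That $l$ is a flow isomorphism follows from $l(gp)=g\circ l(p)$, which is immediate from the extension property and associativity.

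We expect the main subtlety to be verifying associativity in (1) via $d_{p*q}=d_p\circ d_q$, and extracting clopen-ness from left continuity in (3)$\Rightarrow$(1).
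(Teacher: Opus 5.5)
The paper does not prove this Fact; it credits Newelski and Baran's thesis and leaves it as an exercise, so there is no argument to compare against. Your proof is correct and is the standard one. It rests on three points: the definition $p*q=\{A\in\mathcal{A}: d_q(A)\in p\}$; the identity $d_{p*q}=d_p\circ d_q$, which follows from $G$-equivariance of $d_q$; and, for (3)$\Rightarrow$(1), the observation that $\{p: A\in p*q\}$ is a clopen $[B]$ with $B=\{g: A\in p_g*q\}=d_q(A)$.

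There is one small omission in (2)$\Rightarrow$(3). Closedness of $l[S(\mathcal{A})]$ together with $G\subseteq l[S(\mathcal{A})]$ only gives $E(S(\mathcal{A}))\subseteq l[S(\mathcal{A})]$. The reverse inclusion needs continuity of $l$ and density of $\{p_g: g\in G\}$, namely $l[S(\mathcal{A})]=l[\cl(\{p_g: g\in G\})]\subseteq\cl(G)=E(S(\mathcal{A}))$. Without it, "continuous bijection onto $E(S(\mathcal{A}))$" is not yet justified.

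You read d-closedness as $d_q[\mathcal{A}]\subseteq\mathcal{A}$ for all $q\in S(\mathcal{A})$, equivalently $\bigcup_q d_q[\mathcal{A}]=\mathcal{A}$ since $d_{p_e}=\id$. That is the right reading. The literal condition "$d_q[\mathcal{A}]=\mathcal{A}$ for every $q$" would fail in the following case: take $\mathcal{A}$ to be the finite/cofinite subsets of an infinite $G$ and $q$ the $G$-invariant point at infinity. Then $d_q[\mathcal{A}]=\{\emptyset,G\}$, although (2) holds there.

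Likewise, the uniqueness clause can only concern the operation $*$ in items (2) and (3). Your density argument handles (2), and the formula $A\in p*q\iff d_q(A)\in p$ handles (3).
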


The next fact was observed by Newelski, but appeared in Adam Malinowski's thesis \cite{Mal}.

\begin{fact}\cite[Fact 2.22 and Theorem 2.24]{Mal}\label{fact: A^d}
Let $\mathcal{A}$ be a Boolean $G$-algebra of subsets of $G$. There exists a smallest d-closed $G$-algebra of subsets of $G$ containing $\mathcal{A}$ which is denoted by $\mathcal{A}^d$. The d-closure $\mathcal{A}^d$ is realized in one step, i.e.\ $\mathcal{A}^d$ is precisely the Boolean algebra generated by $\{d_p(A): A \in \mathcal{A}, p \in S(\mathcal{A})\}$. Moreover, $E(S(\mathcal{A})) \cong S(\mathcal{A}^d)$ as semigroups and as $G$-flows (with respect to the unique left-continuous semigroup operation $*$ on $S(\mathcal{A}^d)$ extending the action of $G$ provided by Fact \ref{fact: d-closed iff semigroup}).
\end{fact}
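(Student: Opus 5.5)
The plan is to build $\mathcal{A}^d$ explicitly as the Boolean algebra $\mathcal{B}$ generated by $\{d_p(A): A\in\mathcal{A},\ p\in S(\mathcal{A})\}$. We show that $\mathcal{B}$ is a d-closed $G$-algebra containing $\mathcal{A}$ and contained in every d-closed $G$-algebra containing $\mathcal{A}$. Then we identify $S(\mathcal{B})$ with $E(S(\mathcal{A}))$ via Fact \ref{fact: d-closed iff semigroup}.

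\textbf{Step 1: $\mathcal{B}$ is a $G$-algebra containing $\mathcal{A}$, and minimality.} We have $d_{p_e}(A)=\{g: g^{-1}A\ni e\}=A$, so $\mathcal{A}\subseteq\mathcal{B}$. Moreover $h\,d_p(A)=\{hg: g^{-1}A\in p\}=d_p(hA)$, so the generators are closed under translation, and hence $\mathcal{B}$ is a $G$-algebra.

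For minimality, let $\mathcal{C}\supseteq\mathcal{A}$ be d-closed and $p\in S(\mathcal{A})$. Extend $p$ to some $q\in S(\mathcal{C})$. For $A\in\mathcal{A}$ we get $g^{-1}A\in q \iff g^{-1}A\in p$, so $d_p(A)=d_q(A)\in\mathcal{C}$. Thus $\mathcal{B}\subseteq\mathcal{C}$, and the intersection of all such $\mathcal{C}$ exists as well.

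\textbf{Step 2: $\mathcal{B}$ is d-closed (the main obstacle).} Fix $r\in S(\mathcal{B})$. Since $d_r$ is a Boolean homomorphism, it suffices to show that $d_r(d_p(A))\in\mathcal{B}$. Unwinding the definition,
\[
g\in d_r(d_p(A)) \iff g^{-1}d_p(A)=d_p(g^{-1}A)\in r .
\]
The idea is to realize this as $d_{s}(A)$ for a suitable $s\in S(\mathcal{A})$, namely a ``product'' $s=r*p$. Define
\[
s:=\{A\in\mathcal{A}: d_p(A)\in r\}.
\]
This is an ultrafilter on $\mathcal{A}$, because $d_p$ is a Boolean homomorphism and $r$ is an ultrafilter on $\mathcal{B}\ni d_p(A)$. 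It then follows that $g\in d_s(A)\iff g^{-1}A\in s\iff d_p(g^{-1}A)\in r$, so $d_r(d_p(A))=d_s(A)\in\mathcal{B}$. The delicate point is precisely that $d_p$ commutes with translations; this was checked in Step 1.

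\textbf{Step 3: $E(S(\mathcal{A}))\cong S(\mathcal{B})$.} By Fact \ref{fact: d-closed iff semigroup}(3), $S(\mathcal{B})\cong E(S(\mathcal{B}))$. It remains to show that the restriction map $\pi\colon S(\mathcal{B})\to S(\mathcal{A})$ induces an isomorphism $E(S(\mathcal{B}))\to E(S(\mathcal{A}))$.

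The natural map $E(S(\mathcal{B}))\to E(S(\mathcal{A}))$, $\eta\mapsto\bar\eta$ with $\bar\eta(\pi x)=\pi(\eta x)$, is a continuous surjective semigroup and $G$-flow homomorphism, since $\pi$ is a factor map. For injectivity, identify $\eta$ with $r\in S(\mathcal{B})$ via $r*x$. We use $d_r(d_p(A))=d_{r*p}(A)$ from Step 2 together with the formula $B\in r*q\iff d_q(B)\in r$. This shows that $r$ is determined by the values $r*p$ for $p\in S(\mathcal{A})$, i.e.\ by $\bar\eta$: each generator $d_p(A)$ lies in $r$ iff $A\in r*p$ (evaluated in $S(\mathcal{A})$).

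The map is therefore a continuous bijection from a compact space to a Hausdorff space, hence a homeomorphism, and it respects $*$ and the $G$-action. Uniqueness of $*$ is given by Fact \ref{fact: d-closed iff semigroup}.
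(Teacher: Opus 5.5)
Your proof is correct. The paper gives no proof of this statement: it attributes the fact to Newelski and cites Malinowski's thesis \cite[Fact 2.22 and Theorem 2.24]{Mal}. So there is no in-paper route to compare against, and your argument is a self-contained proof relative to Fact \ref{fact: d-closed iff semigroup}, which the paper also leaves unproved.

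Steps 1 and 2 are complete as written. You correctly read d-closedness as $d_p[\mathcal{A}]\subseteq\mathcal{A}$ for all $p\in S(\mathcal{A})$. The identities you use give existence, minimality, and the one-step description:
\begin{itemize}
\item $d_{p_e}(A)=A$ and $h\,d_p(A)=d_p(hA)$;
\item $d_q(A)=d_p(A)$ for $A\in\mathcal{A}$ and $q\in S(\mathcal{C})$ extending $p$;
\item $d_r(d_p(A))=d_s(A)$ with $s=d_p^{-1}[r]$.
\end{itemize}

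The one thing to tighten is in Step 3. The formula $B\in r*q\iff d_q(B)\in r$ is not part of Fact \ref{fact: d-closed iff semigroup} as stated, so it should be derived:
\begin{itemize}
\item Principal ultrafilters are dense in $S(\mathcal{B})$, so $r=\lim_i p_{g_i}$ for some net.
\item Left continuity and $p_g*q=gq$ give $r*q=\lim_i g_iq$.
\item $B\in g_iq$ iff $g_i\in d_q(B)$, and $d_q(B)\in\mathcal{B}$ by Step 2, so $[d_q(B)]$ is clopen.
\end{itemize}
With this, for any lift $q$ of $p$ you get $\bar\eta(p)=\pi(r*q)=\{A\in\mathcal{A}: d_p(A)\in r\}$. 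This determines your $r$ on the generators of your $\mathcal{B}$, and your compact-to-Hausdorff argument finishes.

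A slightly shorter alternative for the last part is to apply Fact \ref{fact: 0-dim ambit is S(A)} to the $0$-dimensional ambit $(G,E(S(\mathcal{A})),\id)$. Its clopens are finite Boolean combinations of the sets $\{\eta: A\in\eta(p)\}$, and the trace of such a set on $G$ is exactly $d_p(A)$. Hence $E(S(\mathcal{A}))\cong S(\mathcal{B})$ as ambits immediately. Transporting composition then gives a left-continuous operation extending the action, and d-closedness of $\mathcal{B}$ follows from (2)$\Rightarrow$(1) of Fact \ref{fact: d-closed iff semigroup}. Your direct computation in Step 2 avoids that direction of the Fact and makes the one-step closure fully explicit. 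The alternative obtains d-closedness and the isomorphism in a single stroke.
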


The following fact is folklore. It shows that while considering 0-dimensional ambits, without loss of generality we can just study the ambits of the form $(G,S(\mathcal{A}),p_e)$ where $\mathcal{A}$ is a $G$-algebra of subsets of $G$ and $p_e:=\{A \in \mathcal{A}: e \in A\}$.

\begin{fact}\label{fact: 0-dim ambit is S(A)}
Let $(G,X,x_0)$ be a $0$-dimensional ambit. Let $\mathcal{A}$ be the Boolean $G$-algebra of subsets $G$ consisting of the sets of the form $U_G:=\{g \in G: gx_0 \in U\}$ for $U$ ranging over the clopen subsets of $X$, and $p_e:=\{A \in \mathcal{A}: e \in A\}$. Then the map $\Phi \colon X \to S(\mathcal{A})$ given by $\Phi(x):=q_x$ where $q_x:=\{U_G: x \in U \subseteq X,\, U \textrm{ clopen}\}$ is an isomorphism between the ambits $(G,X,x_0)$ and $(G,S(\mathcal{A}),p_e)$.
\end{fact}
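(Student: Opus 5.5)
We have to verify that $\mathcal{A}$ is a Boolean $G$-algebra and that $\Phi$ is a well-defined homeomorphism which is $G$-equivariant and sends $x_0$ to $p_e$. The plan is to first study the map $U \mapsto U_G$ from the Boolean algebra $\mathrm{Clop}(X)$ of clopen subsets of $X$ to $\mathcal{P}(G)$, then transfer everything through Stone duality.

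\textbf{Step 1: the map $U \mapsto U_G$.} Clearly $(U\cap V)_G=U_G\cap V_G$, $(X\setminus U)_G=G\setminus U_G$, $X_G = G$ and $\emptyset_G=\emptyset$. So $U\mapsto U_G$ is a Boolean homomorphism $\mathrm{Clop}(X)\to\mathcal{P}(G)$, and its image $\mathcal{A}$ is a Boolean algebra. It is closed under left translations, since
$$hU_G=\{hg: gx_0\in U\}=\{g': g'x_0\in hU\}=(hU)_G,$$
and $hU$ is clopen.

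\textbf{Step 2: injectivity, the key point.} The homomorphism is injective because of density of the orbit. If $U\neq\emptyset$ is clopen, then $U$ is open, so some $gx_0\in U$, i.e.\ $U_G\neq\emptyset$. Hence $U\mapsto U_G$ is an isomorphism of Boolean algebras $\mathrm{Clop}(X)\cong\mathcal{A}$ which commutes with the $G$-actions by Step 1.

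\textbf{Step 3: transfer via Stone duality.} Since $X$ is compact, Hausdorff and $0$-dimensional, the map $x\mapsto\{U\in\mathrm{Clop}(X): x\in U\}$ is a homeomorphism $X\to S(\mathrm{Clop}(X))$; this is Stone duality. Composing with the isomorphism of Step 2 gives exactly $\Phi$. Thus $\Phi$ is well defined, i.e.\ $q_x$ is an ultrafilter of $\mathcal{A}$, and $\Phi$ is a homeomorphism.

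\textbf{Step 4: equivariance and base point.} For equivariance we compute
$$hq_x=\{hU_G: x\in U\}=\{(hU)_G: hx\in hU\}=q_{hx},$$
using that $U\mapsto hU$ is a bijection of $\mathrm{Clop}(X)$. Finally, $q_{x_0}=\{U_G: x_0\in U\}=\{A\in\mathcal{A}: e\in A\}=p_e$, since $e\in U_G$ iff $x_0\in U$. This completes the argument; there is no real obstacle beyond noting injectivity via density of the orbit in Step 2.
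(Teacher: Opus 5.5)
Your proof is correct and complete. The paper calls this fact folklore and gives no proof; your route is the standard argument it has in mind: the Boolean isomorphism $\mathrm{Clop}(X)\cong\mathcal{A}$, with injectivity coming from density of $Gx_0$, then Stone duality, then the direct equivariance and base-point checks.

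One small point is worth adding. When $G$ carries a non-discrete topology, continuity of the translation action on $S(\mathcal{A})$ is not automatic for an arbitrary $G$-algebra. Here it comes for free, because $\Phi$ conjugates that action to the continuous action on $X$.
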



\section{The main results in the abstract context}\label{section: main}

In this section, we introduce various notions of definable Ramsey properties and having finite Ramsey degrees for $0$-dimensional ambits, and we find their dynamical characterizations and consequences.

From now on, let $(G,X,x_0)$ be a $0$-dimensional ambit. 
By $\CB$ we denote the Boolean $G$-algebra of all clopen subsets of $X$.

\subsection{Definable colorings and Ramsey properties}\label{subsection: 3.1}

The goal of this subsection is to introduce variants of definable Ramsey properties, prove their dynamical characterizations (yielding, in particular, a characterization of triviality of all minimal subflows of $(G,X)$), and study the relationships between the introduced variants of Ramsey properties.

 
\begin{definition}\label{definition: definable colorings}
A {\em coloring} is a function $c \colon G \to 2^n$ for some natural number $n \geq 1$. Let $c \colon G \to 2^n$ be a coloring.
\begin{enumerate}
	\item Let $S$ be a subset of $X$. Then $c$ is called {\em $S$-definable} if there exist $U_0,\ldots, U_{n-1}\in \CB$ and $y_0,\ldots,y_{n-1}\in S$ such that for any $g\in G$ and $i<n$,
$$c(g)(i)=
\begin{cases}
1 & \mbox{ if }y_i\in gU_i\\
0 & \mbox{ if } y_i\notin gU_i
\end{cases}.$$

	\item $c$ is called {\em strongly definable} if it is $\{x_0\}$-definable (shortly, $x_0$-definable).

	\item $c$ is called {\em definable} if it is $Gx_0$-definable.

	\item $c$ is is called {\em externally definable} if it is $X$-definable.
\end{enumerate}
\end{definition}

The above terminology is adapted from \cite{KLM}. We will see in Subsection \ref{subsection: first order theories} that in the context of theories 
 the above notions boil down to the usual notions of [externally] definable map from a definable or rather type-definable set to a compact (in fact, finite) space.

\begin{definition}\phantomsection\label{definition: S-DERP}
\begin{enumerate}
	\item Let $S$ be a subset of $X$. The $G$-ambit $X$ has {\em $S$-definable  Ramsey property} ($S$-DRP) if for any $S$-definable coloring $c\colon G\rightarrow 2^n$, for all $k\ge 1$, and for all $A\in [G]^k$, there exists $h\in G$ such that $|c[hA]|=1$.

\item The $G$-ambit $X$ has {\em weakly definable  Ramsey property} (WDRP), {\em definable  Ramsey property} (DRP), {\em externally definable  Ramsey property} (EDRP) if it has $\{x_0\}$-, $Gx_0$-, $X$-DRP, respectively.
\end{enumerate}
\end{definition}

The next proposition is a generalization of Theorem 3.14 from \cite{KLM} (see Section \ref{section: specializations}) and the proof below is an adaptation of the idea of the proof of that theorem to the abstract context.

\begin{proposition}\label{prop: characteriazation_S-derp}
Let $S\subseteq X$. The following are equivalent:
\begin{enumerate}
	\item The $G$-ambit $X$ has $S$-DRP.
	\item There exists $\eta\in E(X)$ such that $\eta[S]\subseteq \Inv(X)$.
\end{enumerate}
\end{proposition}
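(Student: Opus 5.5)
We prove both directions by unwinding the definition of an $S$-definable coloring. The one fact used throughout is this: for $g\in G$, $y\in X$ and $U\in\CB$, we have $y\in gU$ iff $g^{-1}y\in U$. Hence $c(g)(i)$ records whether $g^{-1}y_i\in U_i$, so the colour of $g$ is determined by the clopen-membership pattern of the tuple $(g^{-1}y_0,\dots,g^{-1}y_{n-1})$.

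\emph{(2)$\Rightarrow$(1).} Let $\eta\in E(X)$ with $\eta[S]\subseteq\Inv(X)$. Take an $S$-definable coloring $c$ given by $U_i\in\CB$ and $y_i\in S$, and take $A=\{a_1,\dots,a_k\}\in[G]^k$.
\begin{itemize}
\item Write $\eta=\lim_j g_j$ for a net in $G$, pointwise on $X$.
\item For each $i$, $\eta y_i$ is a fixed point, so $a_l^{-1}\eta y_i=\eta y_i$ for all $l$.
\item By pointwise convergence, $a_l^{-1}g_j y_i\to \eta y_i$ for every $l$ and $i$.
\item Since the $U_i$ are clopen and there are only finitely many pairs $(l,i)$, there is a single $j$ such that for all $l,i$: $a_l^{-1}g_jy_i\in U_i$ iff $\eta y_i\in U_i$.
\end{itemize}
Put $h:=g_j^{-1}$. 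Then $(ha_l)^{-1}y_i=a_l^{-1}g_jy_i$, so $c(ha_l)(i)$ does not depend on $l$. Thus $|c[hA]|=1$.

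\emph{(1)$\Rightarrow$(2).} Here we use a compactness argument in $E(X)$.
\begin{itemize}
\item For a finite set $F\subseteq S$, a finite set $\mathcal U\subseteq\CB$ and a finite set $A\subseteq G$, let $C_{F,\mathcal U,A}$ be the set of $\eta\in E(X)$ such that for all $y\in F$, $U\in\mathcal U$ and $a\in A$: $\eta y\in U$ iff $a\eta y\in U$.
\item Each $C_{F,\mathcal U,A}$ is closed. Indeed, evaluation $\eta\mapsto\eta y$ is continuous, $x\mapsto ax$ is a homeomorphism, and $U$ is clopen.
\item These sets have the finite intersection property, since $C_{F,\mathcal U,A}\cap C_{F',\mathcal U',A'}\supseteq C_{F\cup F',\mathcal U\cup\mathcal U',A\cup A'}$.
\item So it suffices to show each $C_{F,\mathcal U,A}$ is nonempty. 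Then compactness of $E(X)$ gives some $\eta$ in all of them.
\end{itemize}
For such an $\eta$ and any $y\in S$, $a\in G$, $U\in\CB$, we get $\eta y\in U$ iff $a\eta y\in U$. If $a\eta y\neq\eta y$, the points would be separated by a clopen set, since $X$ is $0$-dimensional (as in Remark \ref{remark: charact. of Inv(X)}). Hence $\eta y\in\Inv(X)$.

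\emph{Nonemptiness.} This is where the Ramsey property is used. We may assume $e\in A$.
\begin{itemize}
\item List all pairs $(y,U)\in F\times\mathcal U$ as $(y_i,U_i)_{i<n}$; repetitions of the $y_i$ are allowed, so this is a legitimate $S$-definable coloring $c$.
\item Apply $S$-DRP to the set $A^{-1}$, obtaining $h\in G$ with $c$ constant on $hA^{-1}$.
\item Constancy means: for each $i$, the truth value of $(ha^{-1})^{-1}y_i=ah^{-1}y_i\in U_i$ is the same for all $a\in A$.
\item Since $e\in A$, this common value equals the truth value of $h^{-1}y_i\in U_i$.
\end{itemize}
So $\eta:=h^{-1}\in G\subseteq E(X)$ lies in $C_{F,\mathcal U,A}$.

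The only delicate point is getting the inverses and sides right: $c(g)$ is read at $g^{-1}y_i$, so DRP must be applied to $A^{-1}$, and the witness in $E(X)$ is $h^{-1}$. Otherwise the argument is routine compactness.
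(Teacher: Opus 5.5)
Your proof is correct and follows essentially the same route as the paper. For (2)$\Rightarrow$(1) you use the same ``open condition on $\eta$'' argument, and in (1)$\Rightarrow$(2) your nonemptiness step is the paper's Claim~1 applied to $A^{-1}$; the only difference is cosmetic: you run the compactness step through the finite intersection property of the closed sets $C_{F,\mathcal U,A}\subseteq E(X)$, whereas the paper takes the limit of a convergent subnet of the net $(g_{\bar U,\bar y,A})$ and derives $\eta[S]\subseteq\Inv(X)$ by contradiction.
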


\begin{proof}
($\Rightarrow$) Assume that $X$ has $S$-DRP.

\begin{clm}\label{clm: Claim 1 in S-DERP}
For $\bar U=(U_0,\ldots,U_{n-1})\in \CB^n$, $\bar y=(y_0,\ldots,y_{n-1})\in S^n$, and $A=\{g_0,\ldots,g_{k-1}\}\in [G]^k$, there exists $g^*=g_{\bar U,\bar y,A}\in G$ such that for all $i<n$ and $j<k$, $$g^*y_i\in g_0U_i\Leftrightarrow g^*y_i\in g_jU_i.$$
\end{clm}

\begin{clmproof}
Consider the $S$-definable coloring $c=c_{\bar U,\bar y} \colon G\rightarrow 2^n$ defined as follows: For $g\in G$ and $i<n$, $$c(g)(i):=
\begin{cases}
1 & \mbox{ if } y_i\in gU_i\\
0 & \mbox{ if } y_i\notin gU_i
\end{cases}.$$
By $S$-DRP, there exists $h\in G$ such that $|c[hA]|=1$. Then, $g_{\bar U,\bar y,A}:=h^{-1}$ satisfies the requirements.
\end{clmproof}
Define a pre-order $\le$ on $\CF:=\bigcup_{n,k} \CB^n\times S^n\times [G]^k$ as follows: For $(\bar U,\bar y, A)\in \CB^n\times S^n\times [G]^k$ and $(\bar U',\bar y',A') \in \CB^{n'}\times S^{n'}\times [G]^{k'}$, 
$$(\bar U,\bar y, A)\le (\bar U',\bar y',A')$$
iff $n\leq n'$, $A \subseteq A'$,  and $\{(U_i,y_i)\}_{i<n} \subseteq \{(U'_j,y'_j)\}_{j<n'}$. Then, $\CF$ is directed by $\le$. 

Consider a net $(g_{\bar U,\bar y,A})_{(\bar U,\bar y,A)\in \CF}$ of elements of $G$ obtained by Claim \ref{clm: Claim 1 in S-DERP}. 
Let $\eta \in E(X)$ be the limit of a convergent subnet.

\begin{clm}
$\eta[S]\subseteq \Inv(X)$.
\end{clm}

\begin{clmproof}
Suppose there exists $y\in S$ such that $\eta(y)\notin \Inv(X)$. Then, by Remark \ref{remark: charact. of Inv(X)}, there exist a clopen subset $U$ of $X$ and $g\in G$ such that $\eta(y)\in U\cap gU^c$. Since $U\cap gU^c$ is open, there exists $(\bar U, \bar y, A)\in \CF$ such that:
\begin{enumerate} 
\item $U_i=U$ and $y_i=y$ for some $i$,
\item $e,g\in A$,
\item for $g^*:=g_{\bar U,\bar y, A}$ we have $g^*y\in U\cap gU^c$. 
\end{enumerate}
By (1), (2), and the fact that $g^*=g_{\bar U,\bar y, A}$ satisfies the conclusion of Claim \ref{clm: Claim 1 in S-DERP}, we get 
$$g^*y\in U\Leftrightarrow g^*y\in gU,$$ 
which contradicts item (3).
\end{clmproof}

($\Leftarrow$) Suppose there exists $\eta\in E(X)$ such that $\eta[S]\subseteq \Inv(X)$. Let $c\colon G\rightarrow 2^n$ be an $S$-definable coloring given by $U_0,\ldots,U_{n-1}\in \CB$ and $y_0,\ldots,y_{n-1}\in S$. Consider any $A\in [G]^k$. Since $\eta(y_i)\in \Inv(X)$, by Remark \ref{remark: charact. of Inv(X)}, we have that $$\bigwedge_i \left( \eta(y_i)\in \bigcap_{g\in A}gU_i \cup \bigcap_{g\in A}gU_i^c\right),$$ which is an open condition for $\eta$. So, there exists $h\in G$ such that $$\bigwedge_i \left( h^{-1}y_i\in \bigcap_{g\in A}gU_i \cup \bigcap_{g\in A}gU_i^c\right),$$ which implies that $|c[hA]|=1$.
\end{proof}

\begin{corollary}\label{cor: characterization_wderp}
The following are equivalent:
\begin{enumerate}
	\item The ambit $X$ has WDRP.
	\item $\Inv(X)$ is non-empty.
\end{enumerate}
\end{corollary}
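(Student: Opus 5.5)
The plan is to specialize Proposition \ref{prop: characteriazation_S-derp} to $S=\{x_0\}$, since WDRP is by definition $\{x_0\}$-DRP. That proposition turns (1) into the statement that there is some $\eta\in E(X)$ with $\eta(x_0)\in \Inv(X)$. So it suffices to show that this condition is equivalent to $\Inv(X)\neq\emptyset$.

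The direction from the existence of $\eta$ to non-emptiness is immediate: $\eta(x_0)$ is itself a point of $\Inv(X)$.

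For the converse, take any $x\in \Inv(X)$ and produce $\eta\in E(X)$ with $\eta(x_0)=x$. Since $(G,X,x_0)$ is an ambit, the orbit $Gx_0$ is dense, so there is a net $(g_i)_i$ in $G$ with $g_ix_0\to x$. The net $(g_i)_i$ lives in $E(X)\subseteq X^X$, which is compact, so we may pass to a subnet converging to some $\eta\in E(X)$. Convergence in $X^X$ is pointwise, hence $\eta(x_0)=\lim_i g_ix_0=x\in\Inv(X)$. Equivalently, the evaluation map $E(X)\to X$, $\eta\mapsto\eta(x_0)$, is continuous with image a compact set containing the dense set $Gx_0$, so it is onto.

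There is no real obstacle here. The only point requiring care is the surjectivity of evaluation at $x_0$, which follows from compactness of $E(X)$ together with density of the orbit of $x_0$.
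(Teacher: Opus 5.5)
Your proof is correct. For (1)$\Rightarrow$(2) it is exactly the paper's argument: specialize the Proposition to $S=\{x_0\}$.

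For (2)$\Rightarrow$(1) the paper does not go back through the Proposition. It takes $y\in\Inv(X)$ and a strongly definable coloring given by $U_0,\dots,U_{n-1}$. Using the clopen characterization of invariant points, it notes that $y$ lies in the open set $\bigcap_{i<n}\bigl(\bigcap_{g\in A}gU_i\cup\bigcap_{g\in A}gU_i^c\bigr)$. Density of $Gx_0$ then gives $h$ with $h^{-1}x_0$ in this set, which is exactly $|c[hA]|=1$.

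You instead observe that evaluation $\eta\mapsto\eta(x_0)$ maps $E(X)$ onto $X$, and then invoke the ($\Leftarrow$) direction of the Proposition. The two arguments are the same in substance: the open condition on $\eta(x_0)$ used in the Proposition's proof is precisely what density of $Gx_0$ realizes. Your version is more modular and reuses the Proposition in both directions. The paper's version avoids the Ellis semigroup and the compactness/subnet step entirely for this direction.
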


\begin{proof}
The implication $(1)\Rightarrow (2)$ follows from Proposition \ref{prop: characteriazation_S-derp}. It remains to prove $(2)\Rightarrow (1)$. Suppose $\Inv(X)\neq \emptyset$. Let $c\colon G\rightarrow 2^n$ be a strongly definable coloring given by $U_0,\ldots,U_{n-1}\in \CB$. Consider any $A\in [G]^k$. Let $y\in \Inv(X)$. Then, by Remark \ref{remark: charact. of Inv(X)}, we have that 
$$y\in \bigcap_{g\in A}gU_i\cup \bigcap_{g\in A}gU_i^c,\ \text{for every }i<n.$$ Since $Gx_0$ is dense in $X$, there exists $h\in G$ such that $$h^{-1}x_0\in \bigcap_{g\in A}gU_i\cup \bigcap_{g\in A}gU_i^c,\ \text{for every }i<n.$$ and so $$x_0\in \bigcap_{g\in A}hgU_i\cup \bigcap_{g\in A}hgU_i^c,\ \text{for every }i<n.$$ which is equivalent to $|c[hA]|=1$.
\end{proof}

\begin{remark}\label{remark: equivalent dynamical characterizations of EDERP}
The following conditions are equivalent:
\begin{enumerate}
	\item There exists $\eta\in E(X)$ such that $\im(\eta)\subseteq \Inv(X)$.
	\item There exists a $G$-invariant element $\eta \in E(X)$.
	\item Every/some minimal left ideal in $E(X)$ is trivial (i.e.\ singleton).
	\item Every minimal subflow of $X$  is trivial.
\end{enumerate}
\end{remark}

\begin{proof}
The equivalence (1) $\Leftrightarrow$ (2) is trivial with the same $\eta$ in (1) and (2). 

The implication (3) $\Rightarrow$ (2) is trivial. For the converse consider any minimal left ideal $\mathcal{M}$ in $E(X)$, take $\eta_0 \in \mathcal{M}$ and any $G$-invariant $\eta \in E(X)$. Then $\eta \eta_0 \in \mathcal{M}$ is $G$-invariant, so $\mathcal{M}$ is trivial by minimality.

(1) $\Rightarrow$ (4). Take $\eta \in E(X)$ with $\im(\eta)\subseteq \Inv(X)$. Consider any $x \in X$ in any minimal subflow $M$ of $X$; note that $M=\cl(Gx)$. Then $\eta(x) \in \cl(Gx)$ and $G\eta(x)=\{\eta(x)\}$, so $M=\cl(Gx) =\{\eta(x)\} =\{x\}$ by minimality of $M$.

(4) $\Rightarrow$ (1) Assume that every minimal subflow of $X$ is trivial. Consider any $\eta \in E(X)$ which belongs to a minimal left ideal. Then for every $x \in X$ the element $\eta(x) \in \cl(Gx)$ belongs to a minimal subflow of $X$ and so $\eta(x) \in \Inv(X)$.
\end{proof}

Summarizing, we have:

\begin{corollary}\label{cor: amenability_derp}
Let $(G,X,x_0)$ be a $0$-dimensional ambit.
\begin{enumerate}
	\item The following are equivalent:
	\begin{enumerate}
		\item The $G$-ambit $X$ has WDRP.
		\item $\Inv(X)\neq \emptyset$.
	\end{enumerate}
	
	\item The following are equivalent:
	\begin{enumerate}
		\item The $G$-ambit $X$ has DRP.
		\item There exists $\eta\in E(X)$ such that $\eta[Gx_0]\subseteq \Inv(X)$.
	\end{enumerate}

	\item The following are equivalent:
	\begin{enumerate}
		\item The $G$-ambit $X$ has EDRP.
		\item There exists $\eta\in E(X)$ such that $\im(\eta)\subseteq \Inv(X)$.
		\item Every minimal subflow of $X$ is trivial.
	\end{enumerate}
\end{enumerate}
\end{corollary}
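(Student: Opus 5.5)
This corollary is a summary of results already established, so the plan is to assemble it from Proposition \ref{prop: characteriazation_S-derp}, Corollary \ref{cor: characterization_wderp}, and Remark \ref{remark: equivalent dynamical characterizations of EDERP}, specialising $S$ appropriately.

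For item (1), WDRP is by definition $\{x_0\}$-DRP. The equivalence (a)$\Leftrightarrow$(b) is exactly Corollary \ref{cor: characterization_wderp}, so we cite it directly. It is worth recording how the two directions arise. The forward direction comes from Proposition \ref{prop: characteriazation_S-derp} with $S=\{x_0\}$, which yields $\eta$ with $\eta(x_0)\in\Inv(X)$. The backward direction uses density of $Gx_0$ instead of an element of $E(X)$. This matters because an arbitrary $y\in\Inv(X)$ need not lie in the form $\eta(x_0)$ a priori. In fact it does, since $X=\cl(Gx_0)$ gives $y=\lim g_ix_0$, and a subnet of $(g_i)$ converges in $E(X)$ to some $\eta$ with $\eta(x_0)=y$. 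Either route suffices.

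For item (2), DRP is $Gx_0$-DRP by definition. We apply Proposition \ref{prop: characteriazation_S-derp} with $S=Gx_0$, and (a)$\Leftrightarrow$(b) is then literally that proposition.

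For item (3), EDRP is $X$-DRP. Proposition \ref{prop: characteriazation_S-derp} with $S=X$ gives (a)$\Leftrightarrow$(b), since $\eta[X]=\im(\eta)$. Then (b)$\Leftrightarrow$(c) is the equivalence (1)$\Leftrightarrow$(4) of Remark \ref{remark: equivalent dynamical characterizations of EDERP}.

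No step is a genuine obstacle. The only point needing care is item (1)(b)$\Rightarrow$(a), where nonemptiness of $\Inv(X)$ must be converted into the Ramsey statement; this is handled by Corollary \ref{cor: characterization_wderp}.
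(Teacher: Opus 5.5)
Your proposal is correct and follows the paper: the paper states this corollary with no separate proof, as a summary of Proposition~\ref{prop: characteriazation_S-derp} (with $S=\{x_0\}$, $Gx_0$, $X$), Corollary~\ref{cor: characterization_wderp}, and the equivalence (1)$\Leftrightarrow$(4) of Remark~\ref{remark: equivalent dynamical characterizations of EDERP}. Your side observation is also valid: every $y\in\Inv(X)$ equals $\eta(x_0)$ for some $\eta\in E(X)$, by density of $Gx_0$ and compactness of $E(X)$, and this gives a second route to (1)(b)$\Rightarrow$(a) directly from the proposition.
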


We state a few more conditions equivalent to EDRP.

\begin{remark}\label{remark: fixed points in powers}
For any flow $(G,X)$ the following conditions are equivalent:
\begin{enumerate}
\item $(G,X)$ has the property that any subflow of any finite Cartesian power
$X^n$ has a fixed point.

\item There is an element $\eta$ in $E(X)$ with $\im(\eta)\subseteq \Inv(X)$.
\end{enumerate}
\end{remark}

\begin{proof} 
(1) $\Rightarrow$ (2). By (1), for any finite tuple $\bar x$ of
elements of $X$ there is a fixed point $\bar y$ in the closure of the
orbit $G\bar x$. On the other hand, there is an element $\eta_{\bar x} \in
E(X)$ mapping $\bar x$ to $\bar y$. Then all the $\eta_{\bar x}$'s (for
varying $\bar x$) form a net, whose accumulation point $\eta \in E(X)$
has image contained in $\Inv(X)$ (because $\Inv(X)$ is closed). So (2)
holds.

(2) $\Rightarrow$ (1). Take $\eta \in E(X)$ satisfying (2). Since $\eta \in E(X)$, it
preserves each subflow $Y$ of $(G,X^n)$ as a set, so any point in
$\eta[Y]$ is in $Y$ and is also a fixed point (by the choice of $\eta$). So
(1) holds.
\end{proof}

\begin{remark}\label{remark: EDERP iff s-EDERP for all s} 
The ambit $(G,X,x_0)$ has EDRP iff for every $s\in X$ the ambit $(G,X,x_0)$ has $\{s\}$-DRP.
\end{remark}

\begin{proof}
The implication ($\Rightarrow$) is obvious. For the converse, first we show

\begin{clm}
For every finite $S \subseteq X$ the ambit $(G,X,x_0)$ has $S$-DRP.
\end{clm}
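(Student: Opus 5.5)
Given finite $S=\{s_0,\ldots,s_{m-1}\}\subseteq X$, I will argue by induction on $m$, using the dynamical characterization from Proposition \ref{prop: characteriazation_S-derp}. The goal at each stage is an $\eta\in E(X)$ with $\eta[S]\subseteq \Inv(X)$. The case $m=0$ is trivial (take $\eta=\id_X$), and $m=1$ is exactly the hypothesis, again via Proposition \ref{prop: characteriazation_S-derp}.

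For the inductive step, assume $\eta_1\in E(X)$ satisfies $\eta_1(s_i)\in\Inv(X)$ for all $i<m-1$. Put $s':=\eta_1(s_{m-1})\in X$. By hypothesis $(G,X,x_0)$ has $\{s'\}$-DRP, so Proposition \ref{prop: characteriazation_S-derp} gives $\eta_2\in E(X)$ with $\eta_2(s')\in \Inv(X)$. Set $\eta:=\eta_2\eta_1\in E(X)$; this lies in $E(X)$ because $E(X)$ is a semigroup. Then $\eta(s_{m-1})=\eta_2(s')\in\Inv(X)$.

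The one point to check is that $\eta_2$ preserves the fixed points already obtained: for $i<m-1$ we need $\eta_2(\eta_1(s_i))=\eta_1(s_i)$. This holds because every element of $E(X)$ fixes every $y\in\Inv(X)$. Indeed, $\eta_2$ is a limit of a net $(g_j)$ in $G$, so $\eta_2(y)=\lim g_jy=\lim y=y$. Hence $\eta[S]\subseteq\Inv(X)$, and Proposition \ref{prop: characteriazation_S-derp} yields $S$-DRP. No step here is a genuine obstacle; the only subtlety is applying the hypothesis to the moved point $\eta_1(s_{m-1})$ rather than to $s_{m-1}$ itself.

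After the claim, the remark follows by passing from finite $S$ to $S=X$. For every finite $F\subseteq X$ choose $\eta_F\in E(X)$ with $\eta_F[F]\subseteq\Inv(X)$. These form a net indexed by finite subsets of $X$ ordered by inclusion. Let $\eta\in E(X)$ be a limit of a convergent subnet, which exists by compactness of $E(X)$. Fix $x\in X$. Eventually $x\in F$, so eventually $\eta_F(x)\in\Inv(X)$. Since convergence in $E(X)$ is pointwise, $\eta(x)$ is a limit of such values, and $\Inv(X)$ is closed, so $\eta(x)\in\Inv(X)$. Thus $\im(\eta)\subseteq\Inv(X)$, and Corollary \ref{cor: amenability_derp}(3) gives EDRP.
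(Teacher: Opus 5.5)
Your proof is correct and follows the paper's argument. Both compose elements of $E(X)$ recursively, applying the single-point hypothesis to the image of the next point under the composite built so far. Both then pass to $S=X$ via an accumulation point of the net $(\eta_F)_F$. Your explicit check that every element of $E(X)$ fixes each point of $\Inv(X)$ is left implicit in the paper, and it is exactly what ensures the later factors do not move the points already sent into $\Inv(X)$.
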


\begin{clmproof}
Write $S=\{s_0,\dots,s_{n-1}\}$, and construct recursively $\eta_0,\dots,\eta_{n-1} \in E(X)$ as follows. By the assumption applied to $s_0$ and Proposition \ref{prop: characteriazation_S-derp}, we can choose $\eta_0 \in E(X)$ with $\eta_0(s_0) \in \Inv(X)$. Again by assumption, but this time applied to $\eta_0(s_1)$, and Proposition \ref{prop: characteriazation_S-derp}, we can choose $\eta_1 \in E(X)$ with $\eta_1(\eta_0(s_1)) \in \Inv(X)$.  We continue with this procedure $n$ times. Then $\eta:=\eta_{n-1} \circ \dots \circ \eta_0 \in E(X)$ has the property that $\eta[S] \subseteq \Inv(X)$. Therefore, $(G,X,x_0)$ has $S$-DRP by Proposition \ref{prop: characteriazation_S-derp}.
\end{clmproof}

By Claim 1 and Proposition \ref{prop: characteriazation_S-derp}, for every finite $S \subseteq X$, there is $\eta_S \in E(X)$ with $\eta[S] \subseteq \Inv(X)$. Let $\eta$ be an accumulation point of the net $(\eta_S)_S$, where $S$ ranges over the finite subsets of $X$ ordered by inclusion. Then $\im(\eta) \subseteq \Inv(X)$, so $(G,X,x_0)$ has $S$-DRP by Proposition \ref{prop: characteriazation_S-derp}.
\end{proof}

Let us now briefly explain that the above notions WDRP and EDRP are strongly related (in fact, equivalent) to suitable notions of finite oscillation stability from \cite{The}. This is only for information, so we will not give full justifications. A quick (not informative) justification is that the dynamical characterizations in Corollary \ref{cor: amenability_derp} (1b) and (3c) are the same as those in in Propositions 2.3 and 2.4 in \cite{The}. But let us describe a more direct connection between the definitions.

Let $\mathcal{A}$ be the $C^*$-subalgebra of the algebra $C_b(G)$ of bounded complex-valued functions on $G$ which are of the form $h \circ p_{x_0}$ for some $h \in C(X)$, where $C(X)$ is the $C^*$-algebra of complex-valued functions on $X$ and $p_{x_0} \colon G \to X$ is given by $p_{x_0}(g):=gx_0$; note that since $Gx_0$ is dense in $X$, the assignment $h\circ p_{x_0} \mapsto h$ yields a well-defined isomorphism between $\mathcal{A}$ and $C(X)$. Let $G^{\mathcal{A}}$ be the Gelfand space of $\mathcal{A}$, i.e.\ the space of $C^*$-algebra homomorphisms from $\mathcal{A}$ to $\mathbb{C}$ with the weak $*$-topology, and $\tilde{e}_G \in G^\mathcal{A}$ be given by $\tilde{e}_G(f):=f(e_G)$. A classical fact based on Gelfand duality tells us that the map $\Phi \colon X \to G^{\mathcal{A}}$ given by $\Phi(x)(f):=h(x)$, where $h \in C(X)$ is unique such that $f=h \circ p_{x_0}$, is an isomorphism between the $G$-ambits $(G,X,x_0)$ and $(G,G^{\mathcal{A}},\tilde{e}_G)$.

Let us call a coloring $c \colon X \to 2^n$ {\em opp-strongly definable} if $c'$ given by $c'(g):=c(g^{-1})$ is strongly definable. By $0$-dimensionality of $X$ and Stone-Weierstrass theorem, the algebra $\mathcal{A}$ is the closure of the (unital) $*$-subalgebra of $C_b(G)$ generated by the opp-strongly definable colorings.

Having the above background, the property studied in \cite{The} (see Definition 2.1  and Proposition 2.3 in there) that each finite subfamily of $\mathcal{A}$ is finite oscillation stable is easily seen to be equivalent to WDRP. Similarly, using also Remark \ref{remark: EDERP iff s-EDERP for all s}, the property from \cite[Proposition 2.4]{The} that for every $x \in X$ and every finite family $\CF \subseteq C(X)$ the family $\{f \circ p_{x}: f \in \CF\}$ (where $p_x \colon G \to X$ is given by $p_x(g):=gx$) is finitely oscillation stable is easily seen to be equivalent to EDRP.


Now, we will investigate the relationships between WDRP, DRP, and EDRP. We clearly have:
$$\textrm{EDRP} \Rightarrow \textrm{DRP} \Rightarrow \textrm{WDRP}.$$

How about $\textrm{EDRP} \Leftarrow \textrm{DRP}$? In Subsection \ref{subsection: first order theories}, we will see that in the specialization of the general abstract context to first order theories, the properties EDRP and DRP are precisely the properties EDEERP and DEERP from \cite{KLM}, and so Example 5.7 from \cite{KLM} shows that $\textrm{EDRP} \nLeftarrow \textrm{DRP}$. A counterexample in the specialization to definable groups will appear later in Example \ref{example: DERP but not EDERP fr definable groups}. However, we also have a positive observation which in particular implies that in the specialization to the classical KPT context  we have $\textrm{EDRP} \Leftrightarrow \textrm{DRP} \Leftrightarrow \textrm{WDRP}$ (see Remark \ref{remark: all RP agree in Sigma^M}).

\begin{corollary}\label{corollary: equivalence of all RP for d-closed algebras}
Assume the $0$-dimensional ambit $(G,X,x_0)$ possess a left-continuous semigroup operation $*$ extending the action of $G$ (in the sense that $(gx_0)*x=gx$ for all $g \in G$ and $x \in X$). Equivalently,  $(G,X,x_0) \cong (G,S(\mathcal{A}), p_e)$ for some d-closed Boolean $G$-algebra of subsets of $G$. Then $\textrm{EDRP} \Leftrightarrow \textrm{DRP} \Leftrightarrow \textrm{WDRP}$.
\end{corollary}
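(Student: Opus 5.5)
Since EDRP $\Rightarrow$ DRP $\Rightarrow$ WDRP always holds, it suffices to prove WDRP $\Rightarrow$ EDRP under the assumption that $X$ carries a left-continuous semigroup operation $*$ extending the action of $G$. By Corollary \ref{cor: amenability_derp}(1), WDRP gives a fixed point $p \in \Inv(X)$. By Corollary \ref{cor: amenability_derp}(3), EDRP amounts to finding $\eta\in E(X)$ with $\im(\eta)\subseteq\Inv(X)$. The natural candidate is left multiplication by $p$, i.e.\ $\eta:=l(p)$, where $l(p)(x):=p*x$.

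The first step is to check that $l(p)\in E(X)$. Since $Gx_0$ is dense in $X$, choose a net $g_i x_0\to p$. By left continuity of $*$ (continuity in the left coordinate), for every $x\in X$ we get
$$g_i x=(g_ix_0)*x\to p*x.$$
Thus the net $(g_i)_i$, viewed in $X^X$, converges pointwise to $l(p)$, so $l(p)\in \cl(G)=E(X)$. Alternatively, in the d-closed case one can simply cite Fact \ref{fact: d-closed iff semigroup}(3), which says $l$ maps onto $E(X)$.

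The second step is to show $p*x\in\Inv(X)$ for every $x\in X$. For $g\in G$ we have $g(p*x)=(gx_0)*(p*x)=((gx_0)*p)*x$ by associativity. Also $(gx_0)*p=gp=p$, because $*$ extends the action and $p$ is $G$-invariant. Hence $g(p*x)=p*x$, so $\im(l(p))\subseteq \Inv(X)$, and Corollary \ref{cor: amenability_derp}(3) yields EDRP.

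The only delicate point is that $*$ extends the action in the sense $(gx_0)*x=gx$, and this identity is used both for the membership in $E(X)$ and for the invariance computation. The equivalence with d-closed algebras stated in the corollary is handled by Fact \ref{fact: d-closed iff semigroup} together with Fact \ref{fact: 0-dim ambit is S(A)}: under the isomorphism $X\cong S(\mathcal{A})$, the point $gx_0$ corresponds to $p_g$. I do not expect any real obstacle beyond this bookkeeping.
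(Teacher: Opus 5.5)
Your proof is correct and is essentially the paper's argument. The paper also takes a fixed point $x$ and shows that $l(x)$ is a $G$-invariant element of $E(X)$ via $g\,l(x)=l(gx)=l(x)$, which is exactly your associativity computation $g(p*x)=((gx_0)*p)*x=p*x$; the only difference is that you verify $l(p)\in E(X)$ directly from left continuity and density of $Gx_0$, whereas the paper gets this from the cited Facts and remarks that an easy direct argument also works.
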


\begin{proof}
The equivalence between the first two sentences follows from Facts \ref{fact: d-closed iff semigroup} and \ref{fact: 0-dim ambit is S(A)}. By the same facts (or by an easy direct argument), we get that the function $l\colon X \to E(X)$ given by $l(x)(y):=x*y$ is an isomorphism of $G$-flows.

Assume WDRP. By Corollary \ref{cor: amenability_derp}(1), there exists $x \in \Inv(X)$. Then, for any $g\in G$ and $y \in X$ we have $(gl(x))(y)= (l(gx))(y)=l(x)(y)=x*y$, and so $gl(x)=l(x)$. Thus, we get EDRP by Corollary \ref{cor: amenability_derp}(3).
\end{proof}

The question whether $\textrm{DRP} \Leftarrow \textrm{WDRP}$ was more difficult for us, because (as we will see) in all three specializations considered in Section \ref{section: specializations} this implication holds. To understand better the relationship between WDRP and DRP and to give a counterexample, we first make some general observations.

For a clopen subset $U$ of $X$, $y \in X$, and a finite subset $A$ of $G$, we will be considering the set: 
$$S(U,y,A):=\{h \in G: A^{-1}h^{-1}y \subseteq U \textrm{ or } A^{-1}h^{-1}y \subseteq U^c\}.$$
The following remark, which follows directly from the definition of $S$-DRP, is very useful.

\begin{remark}\label{remark: S-DERP in terms of S}
The ambit $(G,X,x_0)$ has $S$-DRP if and only if the family 
$$\{S(U,y,A): U \subseteq X \textrm{ clopen}, y \in S, A \subseteq G \textrm{ finite}\}$$
has the finite intersection property.
\end{remark}

When $\mathcal{A}$ is a Boolean $G$-algebra of subsets of $G$, $U \in \mathcal{A}$, $g \in G$, and $A \subseteq G$, we denote
$$S(U,g,A):=S([U],p_g,A)=\{h \in G: A^{-1}h^{-1}g \subseteq U \textrm{ or } A^{-1}h^{-1}g \subseteq U^c\},$$
where $[U]$ is the basic clopen in $S(\mathcal{A})$ given by $U$, $p_g \in S(\mathcal{A})$ is the principal ultrafilter at $g$, and $S([U],p_g,A)$ is computed with respect to the ambit $(G,S(\mathcal{A}),p_e)$.

The following corollary will imply that in all three specializations $\textrm{DRP} \Leftrightarrow \textrm{WDRP}$.

\begin{corollary}\label{corollary: right invariant G-algebras}
Let $\mathcal{A}$ be a Boolean (left) $G$-algebra of subsets of $G$ which is also a right $G$-algebra (i.e.\ closed under the right translations by the elements of $G$). Then for the ambit $(G,S(\mathcal{A}),p_e)$ the properties DRP and WDRP are equivalent.
\end{corollary}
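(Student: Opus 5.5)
The implication DRP $\Rightarrow$ WDRP is immediate, since every strongly definable coloring is definable. For the converse I will use Remark \ref{remark: S-DERP in terms of S}. It reduces DRP for $(G,S(\mathcal{A}),p_e)$ to the finite intersection property of the family
$$\{S(U,g,A): U\in\mathcal{A},\ g\in G,\ A\subseteq G \textrm{ finite}\}.$$
WDRP is the finite intersection property of the subfamily with $g=e$. The plan is to show that every member of the larger family is a right translate of a member of the subfamily. Since the translations by $g$ stay inside the subfamily, a finite intersection of members of the larger family will then equal a right translate of a finite intersection of members of the subfamily, and the latter is nonempty by WDRP.

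The key computation is for $U\in\mathcal{A}$, $g\in G$ and finite $A$. We have $h\in S(U,g,A)$ iff $A^{-1}h^{-1}g\subseteq U$ or $A^{-1}h^{-1}g\subseteq U^c$. Right-multiplying by $g^{-1}$, this holds iff $A^{-1}h^{-1}\subseteq Ug^{-1}$ or $A^{-1}h^{-1}\subseteq (Ug^{-1})^c$, using $(U^c)g^{-1}=(Ug^{-1})^c$. That is exactly $h\in S(Ug^{-1},e,A)$. Because $\mathcal{A}$ is closed under right translations, $Ug^{-1}\in\mathcal{A}$, and so
$$S(U,g,A)=S(Ug^{-1},e,A).$$

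This identity shows that the family used for DRP is literally the same family as the one used for WDRP, not just a translate of it. Hence the finite intersection property of one is the finite intersection property of the other, and Remark \ref{remark: S-DERP in terms of S} gives DRP $\Leftrightarrow$ WDRP. The only point to double-check is the bookkeeping linking the definition of $S([U],p_g,A)$ in $S(\mathcal{A})$ to the group-level formula; the paper already records this as the displayed definition of $S(U,g,A)$, so no real obstacle is expected.
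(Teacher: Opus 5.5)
Your proof is correct and is the paper's argument: both apply Remark~\ref{remark: S-DERP in terms of S} and use the identity $S(U,g,A)=S(Ug^{-1},e,A)$ with $Ug^{-1}\in\mathcal{A}$, so the DRP family and the WDRP family coincide. The ``right translate'' plan in your first paragraph is not used and can be dropped. In fact $S(U,g,A)=g\,S(U,e,A)$ is a left translate, and intersections of translates by different $g$'s need not be translates of intersections; the equality of the two families is all you need.
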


\begin{proof}
By Remark \ref{remark: S-DERP in terms of S}: 
\begin{eqnarray*}
\textrm{DRP} & \text{iff} & \{S(U,g,A): U \in \mathcal{A}, g \in G, A \subseteq G \textrm{ finite}\} \textrm{ has the finite intersection property},\\
\textrm{WDRP} & \text{iff} & \{S(U,e,A): U \in \mathcal{A}, A \subseteq G \textrm{ finite}\} \textrm{ has the finite intersection property}.
\end{eqnarray*}
The right hand sides are equivalent, because,
by assumption, for $U \in \mathcal{A}$ and $g \in G$ we have $Ug^{-1} \in \mathcal{A}$ and we easily compute that $S(U,g,A)=S(Ug^{-1},e,A)$ for any finite $A \subseteq G$.
\end{proof}

Recall that a subset $U$ of a group $G$ is said to be {\em left [right] syndetic} (or {\em left [right] generic}) if finitely many left [right] translates of $U$ cover $G$.

\begin{lemma}\label{lemma: WDERP and not DERP}
Let $G$ be a group and $U \subseteq G$ a subset which is not left syndetic but for some finite $A \subseteq G$ the subset $AU \cap AU^c$ is right syndetic. Let $\mathcal{A}$ be the Boolean (left) $G$-algebra of subsets of $G$ generated by $U$. Then, the ambit $(G,S(\mathcal{A}),p_e)$ has WDRP but does not have DRP.
\end{lemma}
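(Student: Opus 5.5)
The plan is to prove the two halves separately: WDRP via the fixed-point characterization in Corollary \ref{cor: amenability_derp}(1), and the failure of DRP via the finite-intersection characterization in Remark \ref{remark: S-DERP in terms of S} with $S=Gp_e$.

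\emph{WDRP.} By Corollary \ref{cor: amenability_derp}(1) it suffices to find a $G$-fixed point of $S(\mathcal{A})$.
\begin{itemize}
\item Since $U$ is not left syndetic, no finite union $g_0U\cup\dots\cup g_{m-1}U$ covers $G$. Equivalently, every finite intersection $g_0U^c\cap\dots\cap g_{m-1}U^c$ is nonempty.
\item So the family $\{gU^c : g\in G\}\subseteq\mathcal{A}$ has the finite intersection property. It therefore extends to an ultrafilter $p\in S(\mathcal{A})$.
\item $\mathcal{A}$ is generated as a Boolean algebra by the translates $gU$, $g\in G$. Hence an ultrafilter on $\mathcal{A}$ is determined by which generators it contains, and $p$ is the unique ultrafilter containing no $gU$.
\item For $h\in G$, the ultrafilter $hp=\{hB: B\in p\}$ contains $hgU^c$ for every $g$, i.e.\ again all $g'U^c$. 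By uniqueness $hp=p$.
\end{itemize}
Thus $p\in\Inv(S(\mathcal{A}))$, and WDRP follows.

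\emph{Failure of DRP.} By Remark \ref{remark: S-DERP in terms of S}, it suffices to find finitely many sets $S(U,g,A)$ with $g\in G$ whose intersection is empty. Take $U$ itself and the finite set $A$ from the hypothesis, which is nonempty because $AU\cap AU^c$ is right syndetic. Put $W:=AU\cap AU^c$.

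First compute $S(U,g,A)$. We have $h\notin S(U,g,A)$ iff there are $a,a'\in A$ with $a^{-1}h^{-1}g\in U$ and $a'^{-1}h^{-1}g\in U^c$. This says exactly that $h^{-1}g\in AU\cap AU^c=W$. Hence
\[
S(U,g,A)=\{h\in G : h^{-1}g\notin W\}.
\]

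Now use right syndeticity: choose a finite $F\subseteq G$ with $G=WF$. Given any $h\in G$, write $h^{-1}=wf$ with $w\in W$ and $f\in F$. Then for $g:=f^{-1}$ we get $h^{-1}g=w\in W$, so $h\notin S(U,f^{-1},A)$. Therefore
\[
\bigcap_{f\in F} S(U,f^{-1},A)=\emptyset,
\]
which violates the finite intersection property required for DRP.

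I expect the only delicate step to be the bookkeeping of inverses and of left versus right in identifying the complement of $S(U,g,A)$ with $\{h : h^{-1}g\in W\}$. This is also what forces $W$ to be right syndetic rather than left syndetic. The remaining steps are routine.
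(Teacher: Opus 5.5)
Your proof is correct. For the failure of DRP you argue exactly as the paper does: the complement of $S(U,g,A)$ is $\{h: h^{-1}g\in AU\cap AU^c\}$, and a finite right cover of $G$ by translates of $W=AU\cap AU^c$ gives finitely many sets $S(U,g,A)$ with empty intersection. The paper writes this cover as $\bigcup_i Wg_i^{-1}=G$, which is your $G=WF$ with $F=\{g_i^{-1}\}$.

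For WDRP you take a different route.
\begin{itemize}
\item \emph{Your route.} You do not check the finite intersection property of the sets $S(U_i,e,A_i)$ directly. Instead you exhibit a fixed point and invoke Corollary~\ref{cor: amenability_derp}(1). The fixed point is the unique ultrafilter on $\mathcal{A}$ containing every $gU^c$. It exists because $U$ is not left syndetic, and it is $G$-invariant because $G$ permutes the generators $gU$.
\item \emph{The paper's route.} It stays inside Remark~\ref{remark: S-DERP in terms of S} and rewrites WDRP as $\bigcup_i (A_iU_i\cap A_iU_i^c)\neq G$. A normal-form argument gives a single finite $B$ such that, for each $U_i\in\mathcal{A}$, either $U_i$ or $U_i^c$ is contained in $BU$. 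Hence the whole union lies in some $B'U$, and $B'U\neq G$.
\end{itemize}

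Your version is more conceptual: it names the invariant type explicitly and avoids the Boolean normal-form bookkeeping, at the cost of relying on the dynamical characterization of WDRP. The paper's version is purely combinatorial and runs parallel to the DRP half. Both rest on the same fact, that no finite union of left translates of $U$ covers $G$.
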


\begin{proof}
By Remark \ref{remark: S-DERP in terms of S}, $(G,S(\mathcal{A}),p_e)$ has WDRP if and only if for every $n \geq 1$, for every $U_1,\dots,U_n \in \mathcal{A}$, and for every finite $A_1,\dots,A_n \subseteq G$, we have 
$$(A_1U_1 \cap A_1U_1^c) \cup \dots \cup (A_nU_n \cap A_nU_n^c) \ne G.$$

Since each $U_i$ is a Boolean combination of left translates of $U$, there is a finite $B \subseteq G$ such that for every $i$, either $U_i \subseteq BU$  or $U_i^c \subseteq BU$. Thus, there is a finite $B' \subseteq G$ such that for every $i$, $A_iU_i \cap A_iU_i^c \subseteq B'U$. Hence, since $U$ is not left syndetic, we conclude that $(A_1U_1 \cap A_1U_1^c) \cup \dots \cup (A_nU_n \cap A_nU_n^c) \ne G$, and so $(G,S(\mathcal{A}),p_e)$ has WDRP.

On the other hand, since $AU \cap AU^c$ is right syndetic, there are $g_1,\dots,g_n \in G$ such that 
$$(AU \cap AU^c)g_1^{-1} \cup \dots \cup (AU \cap AU^c)g_n^{-1} =G.$$ 
But this precisely means that 
$$S(U,g_1,A) \cap \dots \cap S(U,g_n,A) = \emptyset,$$
which by Remark \ref{remark: S-DERP in terms of S} implies that $(G,S(\mathcal{A}),p_e)$ does not have DRP.
\end{proof}

So in order to find a counterexample to the implication $\textrm{DRP} \Leftarrow \textrm{WDRP}$, it remains to find an example satisfying the assumptions of Lemma \ref{lemma: WDERP and not DERP}. It is easy to see that if there is such an example, then there is one for $G$ being a free group. We will find it in the free group $F_2$.

\begin{example}\label{example: WDERP does not imply DERP}
Let $G:=F_{a,b}$ be the free group in free generators $a,b$. Define $U$ to be the set of all $x \in F_2$ such that the reduced word of $x$ ends in $a$ and the parity of the total exponent of $a$ in this word is $0$ (i.e.\ the total number of positive and negative occurrences of $a$ in this word is even). We claim that the assumptions of Lemma \ref{lemma: WDERP and not DERP} are satisfied for $A:=\{e,a\}$. 
\end{example}

\begin{proof}
Let us first check that $U$ is not left syndetic. Consider any finite $C \subseteq G$. Let $N \in \mathbb{N}$ be greater than the lengths of the reduced words of all elements in $C$. Then for every $c \in C$ we have that $c^{-1}b^N \notin U$ (as the reduced word of $c^{-1}b^N$ ends in $b$), so $b^N \notin cU$. Thus, $b^N \notin CU$, hence $CU \ne G$.

Now, we show that $AU \cap AU^c$ is right syndetic. First, notice that $AU \cap AU^c \supseteq U \cap aU^c \supseteq U$. Indeed, the inclusion $U \subseteq aU^c$ holds, because for any $u \in U$ the total number of occurrences of $a^{\pm 1}$ in the reduced word of $a^{-1}u$ is odd, and so $a^{-1}u \in U^c$, i.e.\ $u \in aU^c$. 
Thus, it remains to show that $U$ is right syndetic. We will check that $UB=G$ for $B:=\{a^{-1},a^{-2}, (ba)^{-1},(ba^2)^{-1}\}$. 

Consider any $g \in G$. If the reduced word of $g$ does not end in $a^{-1}$, then the reduced words of $ga$ and $ga^2$ both end in $a$ and one of them has the parity of the total exponent of $a$ equal to $0$, so $ga \in U$ or $ga^2 \in U$,  hence $g \in UB$. If the reduced word of $g$ ends in $a^{-1}$, then the reduced words of $gba$ and $gba^2$ end in $a$ and one of them has the parity of the total exponent of $a$ equal to $0$, so $gba \in U$ or $gba^2 \in U$, hence $g \in UB$.
\end{proof}

The whole families of examples of $0$-dimensional ambits with DRP or EDRP will be provided by specializations to first order theories, definable groups, and classical KPT context in Section \ref{section: specializations}.

\subsection{Externally definable Ramsey degrees}\label{subsection: 3.2}

\subsubsection{Externally definable Ramsey degrees and profiniteness of the Ellis group}\label{subsubsection: Ramsey and profiniteness}

The goal of this subsection is to define variants of definable Ramsey degrees and use them to give a Ramsey-theoretic characterization of profiniteness of the Ellis group of the $0$-dimensional ambit $(G,X,x_0)$. We also show that the last property implies that all distal minimal factors of $(G,X)$ are profinite $G$-flows (i.e.\ inverse limits of finite $G$-flows).

Recall that $\CB$ denotes the Boolean $G$-algebra of all clopen subsets of $X$. We write $G\backslash \CB$ for the set of all $G$-orbits $GU$, where $U$ ranges over $\CB$.

Let $S(X)$ be the Stone space $S(\CB)$. For $\Delta\subseteq G\backslash \CB$, let $\CA(\Delta)$ be the Boolean $G$-algebra generated by $\bigcup \Delta$ and let $S_{\Delta}(X):=S(\CA(\Delta))$ be the Stone space of $\CA(\Delta)$. Then  $(G,S_{\Delta}(X),p(x_0)_{\Delta})$ is a $G$-ambit under the natural left action of $G$, where $p(x_0)_{\Delta}:=\{V\in \mathcal{A}(\Delta): x_0\in V\}$.

Let $\CP_{fin}(G\backslash \CB)$ be the set of finite subsets of $G\backslash \CB$, which is directed by inclusion. For each $\Delta_0\subseteq \Delta_1$ from $\CP_{fin}(G\backslash \CB)$, define a map $\pi^{\Delta_1}_{\Delta_0}\colon S_{\Delta_1}(X)\rightarrow S_{\Delta_0}(X)$ by $\pi^{\Delta_1}_{\Delta_0}(p):=p_{\Delta_0}$, where $p_{\Delta_0}:=p\cap \CA(\Delta_0)=\{ U \in \mathcal{A}(\Delta_0): U \in p\}$. Then, each $\pi^{\Delta_1}_{\Delta_0}$ is an epimorphism of $G$-flows so that $(S_{\Delta}(X),\pi^{\Delta_1}_{\Delta_0})_{\Delta_0\subseteq \Delta_1}$ forms an inverse system. The following important remark is obtained by a routine argument.

\begin{remark}\phantomsection\label{rem: identifying_S(X)}
\begin{enumerate}
	\item $S(X)\cong \varprojlim_{\Delta\in \CP_{fin}(G\backslash \CB)}\limits S_{\Delta}(X)$ via  $p \mapsto (p_{\Delta})_\Delta$, where $p_\Delta:= p\cap \CA(\Delta)$.
	\item $X\cong S(X)$ via $x\mapsto p(x)$, where $p(x):=\{U\in \CB : x\in U\}$.
\end{enumerate}
\end{remark}

For each $\Delta \in \CP_{fin}(G\backslash \CB)$ we also have the $G$-flow epimorphism $\pi_\Delta \colon X \to S_{\Delta}(X)$ given by $\pi_\Delta(x): = p(x)_\Delta:= p(x) \cap \CA(\Delta)=\{U \in \CA(\Delta): x \in U\}$.  For an arbitrary $S \subseteq X$ define $S_{\Delta}(S):=\pi_\Delta[S]$.

\begin{definition}\label{definition: Sigma-colorings}
Let $S\subseteq X$ and $\Sigma\subseteq \CB$.
A coloring $c\colon G\rightarrow 2^n$ is called an {\em $S$-definable $\Sigma$-coloring} if $c$ is an $S$-definable coloring given by $U_0,\ldots,U_{n-1}\in \Sigma$ (see Definition \ref{definition: definable colorings}). When $S=Gx_0$, we call it a {\em definable $\Sigma$-coloring}; when $S=X$, we call it an {\em externally definable $\Sigma$-coloring}.
\end{definition}	

\begin{definition}\phantomsection\label{definition: sep. fin EDERdeg}
\begin{enumerate}
\item Let $S$ be a subset of $X$. The $G$-ambit $X$ has {\em separately finite $S$-definable  Ramsey degree} (sep.\ fin.\ $S$-DRdeg) if for any finite $\Sigma\subseteq \CB$, there exists $l(=l(S,\Sigma))\ge 1$ such that for every $S$-definable $\Sigma$-coloring $c\colon G\rightarrow 2^n$, and for every $k\ge 1$ and $A\in [G]^k$, there exists $h\in G$ such that $|c[hA]|\le l$.
\item The $G$-ambit $X$ has  {\em separately finite definable  Ramsey degree} (sep.\ fin.\ DRdeg) if it has sep.\ fin.\ $Gx_0$-DRdeg; it has {\em separately finite externally definable  Ramsey degree} (sep.\ fin.\ EDRdeg) if it has sep.\ fin.\ $X$-DRdeg.
\end{enumerate}
\end{definition}

By analogy with Definition \ref{definition: S-DERP}, we could also define {\em separately finite weakly definable  Ramsey degree} as sep.\ fin.\ $\{x_0\}$-DRdeg, but this property trivially holds in every 0-dimensional ambit, so it does make sense to consider it.

Here is the main abstract result which generalizes Theorem 3.20 in \cite{KLM}. The proof uses the idea of the proof of that theorem, but in fact it is more transparent and simpler (by avoiding formulas, variables, etc.). The most important case of that theorem, which we will be using later, is stated as Corollary  \ref{corollary: characterization_sep_fin_EDERdeg} below.

\begin{theorem}\label{theorem: characterization_sep_fin_S-DERdeg}
Let $S \subseteq X$. The following conditions are equivalent:
\begin{enumerate}
	\item The $G$-ambit $X$ has sep.\ fin.\ $S$-DRdeg.
	\item For every finite subset $\Delta$ of $G\backslash \CB$ there exists $\eta\in E(S_{\Delta}(X))$ such that $\eta[S_{\Delta}(S)]$ is finite.
\end{enumerate}
\end{theorem}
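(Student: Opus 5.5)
The plan is to follow the proof of Proposition \ref{prop: characteriazation_S-derp}: build $\eta$ as a limit of a net for (1)$\Rightarrow$(2), and push the open condition from $\eta$ down to an element of $G$ for (2)$\Rightarrow$(1). The dictionary between the two sides is as follows. For finite $\Delta\in\CP_{fin}(G\backslash\CB)$ fix a finite $\Sigma\subseteq\CB$ containing one representative of each orbit in $\Delta$, closed under complements. Then the clopens of $S_\Delta(X)$ are Boolean combinations of sets $[gU]$ with $g\in G$, $U\in\Sigma$. Hence a point $p\in S_\Delta(X)$ is determined by the function $g\mapsto \mathrm{tp}_\Sigma(g^{-1}p)$, where $\mathrm{tp}_\Sigma(q):=\{U\in\Sigma: U\in q\}$; there are at most $2^{|\Sigma|}$ such $\Sigma$-types. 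Conversely, any finite $\Sigma$ is covered by $\Delta:=\{GU:U\in\Sigma\}$. For an $S$-definable $\Sigma$-coloring given by $\bar U,\bar y$ we have $c(hg)(i)=1$ iff $g^{-1}h^{-1}y_i\in U_i$. So $|c[hA]|$ is the number of distinct tuples $\bigl(\mathrm{tp}_\Sigma(g^{-1}h^{-1}\pi_\Delta y_i)\bigr)_{i<n}$ as $g$ ranges over $A$.

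For (1)$\Rightarrow$(2) I fix $\Delta$, take $\Sigma$ as above and $l:=l(S,\Sigma)$. As in Claim 1 of Proposition \ref{prop: characteriazation_S-derp}, for every finite tuple $\bar y$ from $S$ (with all $U\in\Sigma$ attached to each $y_m$) and every finite $A$, I choose $g^*=g_{\bar y,A}$. It has the property that the $\Sigma$-type of the whole tuple $g^{-1}g^*\bar y$ takes at most $l$ values as $g$ ranges over $A$. I take $\eta\in E(S_\Delta(X))$ (or in $E(X)$, inducing one on $S_\Delta(X)$) to be a limit of a convergent subnet, indexed by pairs $(\bar y,A)$ ordered by inclusion. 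Since these are open conditions, in the limit every finite tuple $\bar q$ from $F:=\eta[S_\Delta(S)]$ has the following property: as $g$ ranges over all of $G$, the joint $\Sigma$-type of $g^{-1}\bar q$ takes at most $l$ values.

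The key finiteness step is then a counting argument. Fix an $L$-tuple of distinct points $q_0,\dots,q_{L-1}\in F$. For $g\in G$ let $\tau(g)$ be the joint $\Sigma$-type of $g^{-1}\bar q$. Then $\tau$ takes at most $l$ values, and each $q_m$ is determined by the function $g\mapsto\tau(g)$. I want to bound $L$ in terms of $l$ and $|\Sigma|$ alone. If the $q_m$ are distinct, they are separated by some pairs $(g,U)$. Picking one separating pair for each $m\neq m'$ and using that the vectors $\bigl(\mathrm{tp}_\Sigma(g^{-1}q_m)\bigr)_m$ take only $l$ values, one should get $L\le (2^{|\Sigma|})^{l}$. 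More precisely, the map $m\mapsto\bigl(\mathrm{tp}_\Sigma(g_j^{-1}q_m)\bigr)_{j<l}$, for $g_1,\dots,g_l$ representatives of the $l$ joint types, is injective. Checking this injectivity rigorously is the step I expect to need the most care; the orbit-counting must be done jointly for the whole tuple, not point by point. Since the bound is uniform in the tuple, $F$ itself is finite.

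For (2)$\Rightarrow$(1), given finite $\Sigma$ I take $\Delta$ as above and $\eta$ with $F=\eta[S_\Delta(S)]$ finite, say $|F|=N$. For a $\Sigma$-coloring with $\bar y$ and $A$, approximating $\eta$ by $h^{-1}$ on the finitely many relevant clopens yields $h$ with $|c[hA]|$ equal to the number of distinct tuples $\bigl(\mathrm{tp}_\Sigma(g^{-1}q_i)\bigr)_i$ for $g\in A$, where $q_i:=\eta(\pi_\Delta y_i)\in F$. The main obstacle here is that finiteness of $F$ does not by itself bound the number of these patterns: the tuple $(q_i)_i$ has at most $N$ distinct entries, but $g$ still varies. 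To handle this I would first improve $\eta$ by composing with a suitable $\zeta\in E(S_\Delta(X))$. One option is to take $\zeta$ in a minimal left ideal. Another is to take a limit point making the finite tuple enumerating $F$ land in a point of $S_\Delta(X)^N$ with orbit of size at most some $l$, and then pass to an accumulation point over all $g$. Either way, the joint $\Sigma$-type of $g^{-1}\bar q$ should take at most $l:=(2^{|\Sigma|})^{N}$-many values over $G$, which would give the required bound on $|c[hA]|$.
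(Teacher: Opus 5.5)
Your proof is correct in substance and uses the same two mechanisms as the paper: a limit $\eta$ of the witnesses $g_{\bar y,A}^{-1}$ for (1)$\Rightarrow$(2), and approximation of $\eta$ by an element of $G$ on an open condition for (2)$\Rightarrow$(1).

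\textbf{(1)$\Rightarrow$(2).} Here your counting is organized differently from the paper's. The paper assumes $\eta[S_\Delta(S)]$ is infinite. It uses Ramsey's theorem to get infinitely many of its points pairwise separated by translates of a single $U_{i_0}$. It then contradicts the bound $l$ directly at a finite stage $h=g_{\bar y,A}$, via the injection $t\mapsto\{S_u:t\in S_u\}$.

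You instead first transfer ``at most $l$ joint $\Sigma$-types over all of $G$'' to the limit $\eta$. This is legitimate: having $l+1$ distinct joint types at $g_0,\dots,g_l$ is a clopen condition on $\eta$. It is therefore inherited by some $g^*_{\bar y',A'}$ with $\bar y\subseteq\bar y'$ and $\{g_0,\dots,g_l\}\subseteq A'$. Your injectivity check is then immediate: if $g$ separates $q_m,q_{m'}$ and $\tau(g)=\tau(g_j)$, then $g_j$ separates them. This route avoids Ramsey's theorem and yields the explicit bound $|\eta[S_\Delta(S)]|\le 2^{|\Sigma| l}$.

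\textbf{(2)$\Rightarrow$(1).} Here the obstacle you describe is not there, and the proposed repair should be dropped. Every $q_i=\eta(\pi_\Delta(y_i))$ lies in $F$. Hence the tuple $(\mathrm{tp}_\Sigma(g^{-1}q_i))_{i<n}$ is determined by the map $F\to\mathcal{P}(\Sigma)$, $q\mapsto\mathrm{tp}_\Sigma(g^{-1}q)$. There are at most $2^{|\Sigma|N}$ such maps, whatever $g$ is. This is exactly the paper's count $|\epsilon[G]|\le 2^{kt}$, and it gives $|c[hA]|\le 2^{|\Sigma|N}$ at once. Note that $|c[hA]|$ is at most, not equal to, the number of these patterns, which is all you need.

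The detour itself does not hold up. Composing with $\zeta$ from a minimal left ideal changes nothing. Your second option is to move the tuple enumerating $F$ to a point of $S_\Delta(X)^N$ with orbit of size at most $l$, and this is impossible in general. For example, take an infinite minimal subshift with $S=\{x_0\}$ and $\Delta$ the orbit of a cylinder. Then $S_\Delta(X)=X$ has no finite orbits at all, although (1) and (2) hold. So the bound you finally state is correct, but for the trivial counting reason, not via the detour.
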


\begin{proof}
$(\Rightarrow)$ Consider $\Delta:=\{GU_0,\ldots,GU_{k-1}\}\subseteq G\backslash \CB$ finite. For $\Sigma:=\{U_0,\ldots,U_{k-1}\}\subseteq \CB$, let $l=l(S,\Sigma)$ be a witness for sep.\ fin.\ $S$-DRdeg with respect to $\Sigma$. For each $\bar y=(y_0,\ldots,y_{n-1})\in S^n$, let $c_{\bar y}:G\rightarrow 2^{kn}$ be an $S$-definable $\Sigma$-coloring defined as follows: for $g\in G$, $i <k$, $j<n$,  $$c_{\bar y}(g)(i,j):=
\begin{cases}
1 & \mbox{ if } y_j\in gU_i\\
0 & \mbox{ if } y_j\notin gU_i
\end{cases}.$$
By the choice of $l$, for each $A\in [G]^s$ there exists $g_{\bar y,A}\in G$ such that $|c_{\bar y}[g_{\bar y,A}A]|\le l$. Direct the set $\bigcup_{n,s} S^n \times [G]^s$ by: 
$$(\bar{y},A) \leq (\bar{y}',A')$$ 
iff $A \subseteq A'$, $|\bar y| \leq |\bar y'|$, and $\{y_i\}_{i} \subseteq \{y'_j\}_j$.
Treating elements of $G$ as elements of $E(S_{\Delta}(X))$, we may choose $\eta \in  E(S_{\Delta}(X))$ to be  the limit of a convergent subnet of the net $(g_{\bar y, A}^{-1})_{\bar{y}, A}$.

\begin{clm}
$\eta[S_{\Delta}(S)]$ is finite.
\end{clm}

\begin{clmproof}
Suppose $\eta[S_{\Delta}(S)]$ is infinite. There exist $z_0,z_1,\ldots\in S$ such that the $\eta(p(z_t)_{\Delta})$'s are pairwise distinct. For $t\neq t'$, $\eta(p(z_t)_{\Delta})\neq \eta(p(z_{t'})_{\Delta})$ is witnessed by some $g_{t,t'}U_i\in GU_i$ in the sense that
$$g_{t,t'}U_{i}\in\eta(p(z_t)_{\Delta})\Leftrightarrow g_{t,t'}U_{i}^c\in\eta(p(z_{t'})_{\Delta}).$$ 
By Ramsey theorem, we may assume that there exists $i_0<k$ such that for every $t<t'$ the inequality $\eta(p(z_t)_{\Delta})\neq\eta(p(z_{t'})_{\Delta})$ is witnessed by some $g_{t,t'}U_{i_0}\in GU_{i_0}$, i.e.\ 
$$g_{t,t'}U_{i_0}\in\eta(p(z_t)_{\Delta})\Leftrightarrow g_{t,t'}U_{i_0}^c\in\eta(p(z_{t'})_{\Delta}).$$ 

Take $n>2^l$. Then, we have that 
$$\bigwedge_{t<t'<n}\left( g_{t,t'}U_{i_0}\in\eta(p(z_t)_{\Delta})\Leftrightarrow g_{t,t'}U_{i_0}^c\in\eta(p(z_{t'})_{\Delta})\right),$$ 
which is an open condition on $\eta$. So there exists $(\bar y,A) \geq ((z_0,\ldots,z_{n-1}),\{g_{t,t'}:t<t'<n\})$ such that 
$$\bigwedge_{t<t'<n}\left( g_{t,t'}U_{i_0}\in g_{\bar y,A}^{-1}(p(z_t)_{\Delta})\Leftrightarrow g_{t,t'}U_{i_0}^c\in g_{\bar y,A}^{-1}(p(z_{t'})_{\Delta})\right),$$ 
which is equivalent to 
\begin{equation}\tag{$*$}
\bigwedge_{t<t'<n}\left( g_{\bar y,A}g_{t,t'}U_{i_0}\in p(z_t)_{\Delta}\Leftrightarrow g_{\bar y,A}g_{t,t'}U_{i_0}^c\in p(z_{t'})_{\Delta}\right).
\end{equation}



Put $h:=g_{\bar y,A}$; recall, $|c_{\bar y}[hA]|\le l$. Since $(\bar y,A) \geq ((z_0,\ldots,z_{n-1}),\{g_{t,t'}:t<t'<n\})$, $hg_{t,t'}\in hA$ for all $t<t'<n$, and for each $t<n$ we can find $i_t$ such that $z_t=y_{i_t}$.

For each $g\in hA$, put $S(g):=\{t<n: y_{i_t}\in gU_{i_0}\}$. Note that the condition $y_{i_t}\in gU_{i_0}$ is equivalent to $gU_{i_0}\in p(y_{i_t})_{\Delta}$. Since $|c_{\bar y}[hA]|\le l$, 
$$l':=|\{S(g):g\in hA\}|\le l,$$ 
and say $\{S(g):g\in hA\}=\{S_0,\ldots,S_{l'-1}\}$. Consider the function $$f\colon n\rightarrow \CP(\{S_0,\ldots,S_{l'-1}\}),\; t\mapsto\{S_u:t\in S_u\},$$ which is injective. Indeed, using $(*)$ and $z_t=y_{i_t}$, for each $t<t'<n$ we have:
\begin{eqnarray*}
&&\big(y_{i_t}\in hg_{t,t'}U_{i_0}\Leftrightarrow y_{i_{t'}}\in hg_{t,t'} U_{i_0}^c\big)\\
&\Rightarrow &\big( t\in S(hg_{t,t'}) \Leftrightarrow t'\notin S(hg_{t,t'})\big)\\
&\Rightarrow & \big(S(hg_{t,t'})\in f(t) \Leftrightarrow S(hg_{t,t'})\notin f(t')\big)\\
&\Rightarrow & \big(f(t)\neq f(t')\big).
\end{eqnarray*}
So, we have $n\le 2^{l'}\le 2^l<n$, a contradiction.
\end{clmproof}

$(\Leftarrow)$ Take $\Sigma\subseteq \CB$ finite. Put $\Delta:=\{GU:U\in \Sigma\}$, which is finite. By assumption, there exists $\eta\in E(S_{\Delta}(X))$ such that $\eta[S_{\Delta}(S)]$ is finite. Put $k:=|\Sigma|$ and $t:=|\eta[S_{\Delta}(S)]|$. Let $c\colon G\rightarrow 2^n$ be an $S$-definable $\Sigma$-coloring given by $U_0,\ldots,U_{n-1}\in \Sigma$ and $y_0,\ldots,y_{n-1}\in S$. Note that $c$ depends only on $p(y_i)_{\Delta}$ because 
$$y_i\in gU_i\Leftrightarrow gU_i\in p(y_i)\Leftrightarrow gU_i\in p(y_i)_{\Delta}.$$

\begin{clm}
For all $A\in [G]^s$, there exists $h\in G$ such that $|c[hA]|\le 2^{kt}$. 
\end{clm}

\begin{clmproof}
Define $\epsilon \colon G \rightarrow 2^n$ via 
$$gU_i^{\epsilon(g)(i)}\in \eta(p(y_i)_{\Delta}),$$
where for a clopen subset $U$ of $X$, $U^1:=U$ and $U^0:=U^c$. Note that there are at most $2^{kt}$-many choices for $\epsilon(g)$, that is, $|\epsilon[G]|\le 2^{kt}$. For an arbitrary $A:=\{g_0,\ldots,g_{s-1}\}\in [G]^s$, consider an open condition on $\eta$ given by
$$\bigwedge_{j<s}\bigwedge_{i<n}\big( g_jU_i^{\epsilon(g_j)(i)}\in \eta(p(y_i)_{\Delta})\big).$$ Then, for some $h\in G$:
\begin{eqnarray*}
&&\bigwedge_{j<s}\bigwedge_{i<n}\big( g_jU_i^{\epsilon(g_j)(i)}\in h^{-1}(p(y_i)_{\Delta})\big)\\
&\Rightarrow & \bigwedge_{j<s}\bigwedge_{i<n}\big( hg_jU_i^{\epsilon(g_j)(i)}\in p(y_i)_{\Delta}\big)\\
&\Rightarrow & \bigwedge_{j<s}\bigwedge_{i<n} hg_jU_i^{\epsilon(g_j)(i)}\ni y_i,
\end{eqnarray*}
and we have that $$|c[hA]|\le |\epsilon[A]|\le |\epsilon[G]|\le 2^{kt}.$$
\end{clmproof}
This finishes the proof of the theorem.
\end{proof}

\begin{corollary}\label{corollary: characterization_sep_fin_EDERdeg}
The following conditions are equivalent:
\begin{enumerate}
	\item The $G$-ambit $X$ has sep.\ fin.\ EDRdeg.
	\item For every finite subset $\Delta$ of $G\backslash \CB$ there exists $\eta\in E(S_{\Delta}(X))$ with finite image.
\end{enumerate}
\end{corollary}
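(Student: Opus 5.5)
This is the special case $S=X$ of Theorem \ref{theorem: characterization_sep_fin_S-DERdeg}, so the plan is simply to specialize that theorem and identify the sets involved. By Definition \ref{definition: sep. fin EDERdeg}(2), sep.\ fin.\ EDRdeg means sep.\ fin.\ $X$-DRdeg. Hence condition (1) of the corollary is literally condition (1) of Theorem \ref{theorem: characterization_sep_fin_S-DERdeg} with $S:=X$.

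For condition (2), the only thing to check is that $S_\Delta(X)=\pi_\Delta[X]$ (in the sense of the definition $S_\Delta(S):=\pi_\Delta[S]$) coincides with the whole Stone space $S_\Delta(X)=S(\CA(\Delta))$. This amounts to surjectivity of $\pi_\Delta\colon X\to S(\CA(\Delta))$. I would argue it directly. Take any ultrafilter $q$ on $\CA(\Delta)$. Every member of $\CA(\Delta)$ is a clopen subset of $X$, since $\CA(\Delta)\subseteq\CB$. So $q$ is a family of closed sets with the finite intersection property, and by compactness of $X$ there is some $x\in\bigcap q$. Then $q\subseteq p(x)\cap\CA(\Delta)$, and since $q$ is an ultrafilter and $p(x)\cap\CA(\Delta)$ is a proper filter of $\CA(\Delta)$, the two are equal. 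Thus $\pi_\Delta(x)=q$. Alternatively, one can just note that $\pi_\Delta$ was already stated to be a $G$-flow epimorphism.

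Consequently, for any $\eta\in E(S_\Delta(X))$ we have $\eta[S_\Delta(X)]=\im(\eta)$. So condition (2) of Theorem \ref{theorem: characterization_sep_fin_S-DERdeg} for $S=X$ is exactly: for every finite $\Delta\subseteq G\backslash\CB$ there is $\eta\in E(S_\Delta(X))$ with finite image. The equivalence then follows immediately from Theorem \ref{theorem: characterization_sep_fin_S-DERdeg}.

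There is no real obstacle here. The only point needing care is the notational overlap: $S_\Delta(X)$ denotes both the Stone space and the image $\pi_\Delta[X]$. The compactness argument above shows these agree.
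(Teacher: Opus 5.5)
Your proposal is correct and follows the paper: the corollary is the case $S=X$ of Theorem~\ref{theorem: characterization_sep_fin_S-DERdeg}, and since $\pi_\Delta\colon X\to S_\Delta(X)$ is surjective, $\eta[S_\Delta(X)]=\im(\eta)$. Your compactness argument simply spells out the surjectivity that the paper takes for granted when it calls $\pi_\Delta$ an epimorphism.
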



\begin{corollary}\label{corollary: Ellis group is profinite}
If the $G$-ambit $X$ has sep.\ fin.\ EDRdeg, then the Ellis group of $X$ is profinite.
\end{corollary}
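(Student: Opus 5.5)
The plan is to realize the Ellis group of $X$ as an inverse limit of the Ellis groups of the flows $S_\Delta(X)$, and to show that each of those is finite by Corollary \ref{corollary: characterization_sep_fin_EDERdeg}.

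\textbf{Step 1: each finite level has a finite Ellis group.} Fix a finite $\Delta \subseteq G\backslash \CB$. By Corollary \ref{corollary: characterization_sep_fin_EDERdeg} there is $\eta \in E(S_\Delta(X))$ with finite image. Take any minimal left ideal $\mathcal{M}_\Delta$ of $E(S_\Delta(X))$ and any $\mu\in\mathcal{M}_\Delta$. Then $\mu\eta \in \mathcal{M}_\Delta$ also has finite image. Choose an idempotent $u_\Delta \in \mathcal{M}_\Delta$ with $\mu\eta \in u_\Delta\mathcal{M}_\Delta$. Every element of the group $u_\Delta\mathcal{M}_\Delta$ has the form $\sigma\mu\eta$, and its image is $\sigma[\im(\mu\eta)]$. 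Since $u_\Delta\mathcal{M}_\Delta$ is a group containing $\mu\eta$, all of its elements have images of the same size $m:=|\im(\mu\eta)|$. Moreover, each element is determined by its restriction to $\im(u_\Delta)$: for $a,b$ in the group, $a=au_\Delta$, so $a=b$ if $a|_{\im u_\Delta}=b|_{\im u_\Delta}$. The set $\im(u_\Delta)$ is finite (it equals $\im(u_\Delta\mu\eta)$). The restriction map therefore embeds $u_\Delta\mathcal{M}_\Delta$ into the finite set of permutations of $\im(u_\Delta)$, which is a group embedding. Hence $u_\Delta\mathcal{M}_\Delta$ is finite, and its $\tau$-topology, being $T_1$, is discrete.

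\textbf{Step 2: pass to the inverse limit.} By Remark \ref{rem: identifying_S(X)}, $X \cong \varprojlim_\Delta S_\Delta(X)$. The epimorphisms $\pi_\Delta$ induce continuous surjective semigroup homomorphisms $\hat\pi_\Delta\colon E(X)\to E(S_\Delta(X))$. The induced map $E(X)\to\varprojlim_\Delta E(S_\Delta(X))$ is injective, because an element of $E(X)$ is determined by its compositions with all $\pi_\Delta$. It is a homeomorphism onto its image by compactness.

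Fix a minimal left ideal $\mathcal{M}$ and an idempotent $u$ in $E(X)$. Then $\hat\pi_\Delta[\mathcal{M}]$ is a minimal left ideal, $\hat\pi_\Delta(u)$ is an idempotent in it, and $\hat\pi_\Delta$ restricts to a group epimorphism $u\mathcal{M}\to \hat\pi_\Delta(u)\hat\pi_\Delta[\mathcal{M}]$, whose target is finite by Step 1. The combined homomorphism $u\mathcal{M} \to \varprojlim_\Delta \hat\pi_\Delta(u)\hat\pi_\Delta[\mathcal{M}]$ is injective. Call this inverse limit of finite groups $K$.

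\textbf{Step 3: surjectivity and topology (the main obstacle).} Here I must show that this map is onto and is a homeomorphism from the $\tau$-topology onto the profinite topology, so that $u\mathcal{M}$ is profinite as a semitopological group.

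For surjectivity, take a compatible family $(a_\Delta)$ and pick $b_\Delta \in u\mathcal{M}$ with $\hat\pi_\Delta(b_\Delta)=a_\Delta$. Let $b$ be a limit point of the net $(b_\Delta)$ in $E(X)$. Then $b\in\mathcal{M}$, since $\mathcal{M}$ is closed, and $\hat\pi_{\Delta'}(b)=a_{\Delta'}$ for all $\Delta'$, by continuity and the compatibility of the family. Set $b':=ub$, which lies in $u\mathcal{M}$, and observe that $\hat\pi_\Delta(b')=\hat\pi_\Delta(u)a_\Delta=a_\Delta$. So the map is a continuous bijective group homomorphism onto $K$.

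For continuity, I would check that each $\hat\pi_\Delta$ is $\tau$-continuous into the discrete finite group; in fact a flow epimorphism induces a $\tau$-continuous epimorphism of Ellis groups (standard Ellis theory). Thus $u\mathcal{M}\to K$ is a continuous bijection from a quasi-compact space onto a compact Hausdorff space.

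It remains to show that the $\tau$-topology is Hausdorff, and I expect this to be the delicate point. I would try to argue that $H(u\mathcal{M})$ lies in the kernel of every $\hat\pi_\Delta$, since each target is Hausdorff. As the combined map is injective, this gives $H(u\mathcal{M})$ trivial, so the $\tau$-topology is Hausdorff. A continuous bijection from a compact space onto a Hausdorff space is then a homeomorphism, and $u\mathcal{M}\cong K$ is profinite.
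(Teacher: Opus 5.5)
Your proof is correct and follows the paper's route. The paper's proof has exactly your decomposition, but cites \cite[Fact 2.1]{KLM} for your Step 1 and Remark \ref{rem: identifying_S(X)} with \cite[Fact 2.8]{KLM} for your Steps 2--3, which you reprove directly.

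Two small fixes. First, $\mu\eta$ need not lie in $\mathcal{M}_\Delta$, because a left ideal is only closed under multiplication on the left; take $\eta\mu$ instead, or $v\eta\mu$ for whichever idempotent $v$ you need. Second, the ``delicate point'' in Step 3 needs no argument: a continuous bijection from a quasi-compact space onto a Hausdorff space is automatically a homeomorphism. Your detour through $H(u\mathcal{M})$ is nonetheless correct.
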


\begin{proof}
By Corollary \ref{corollary: characterization_sep_fin_EDERdeg}, for each finite subset $\Delta$ of $G\backslash \CB$ there exists $\eta\in E(S_{\Delta}(X))$ with finite image, and so the Ellis group of the flow $(G,S_{\Delta}(X))$ is finite by \cite[Fact 2.1]{KLM}. Hence, by Remark \ref{rem: identifying_S(X)} and \cite[Fact 2.8]{KLM}, the Ellis group of the flow $(G,X)$ is profinite.
\end{proof}

The next structural corollary requires the following structural fact on distal minimal flows, which we deduce from some classical results in topological dynamics.

\begin{fact}\label{fact: prof. Ellis group gives prof. flow}
If a distal minimal flow $(G,X)$ has a profinite Ellis group, then it is a profinite flow (i.e.\ an inverse limit of finite flows).
\end{fact}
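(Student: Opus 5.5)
We deduce the fact from the structure theory of distal minimal flows: the Ellis semigroup of a distal flow is a group, and a distal minimal flow is a factor of its own Ellis semigroup.

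First, let $(G,X)$ be distal minimal. By Ellis' theorem, $E(X)$ is a group of bijections of $X$, and $E(X)$ is itself a minimal flow, so it is a minimal left ideal. Its unique idempotent is $u=\id_X$. Hence the Ellis group $u\mathcal{M}$ coincides with $E(X)$.

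Next we compare the $\tau$-topology with the compact topology of $E(X)$. In the distal case $E(X)$ is a compact Hausdorff right topological group. For $B\subseteq E(X)$, $\cl_\tau(B)=u\circ B$ consists of limits of nets $g_ib_i$ with $g_i\to\id$. Any such limit lies in the ordinary closure $\overline{B}$, since $g_ib_i$ and $b_i$ should have the same limits: this uses that left multiplication by $\lim g_i=\id$ behaves well in $X^X$. I expect this step to be the main technical point. The cleaner route is to use the classical result, found in \cite{Gla} or Auslander's book, that for distal minimal flows the canonical comparison of the two topologies lets one pass from profiniteness of the Ellis group to a profinite quotient structure. Concretely, the hypothesis says that $u\mathcal{M}/H(u\mathcal{M})$ is profinite and, being profinite, the whole group is Hausdorff in the $\tau$-topology, so $H(u\mathcal{M})$ is trivial. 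Thus $E(X)$, with the $\tau$-topology, is a profinite group $K$. Its open normal subgroups $N$ of finite index have $\tau$-closed, hence (via $\cl_\tau(B)\subseteq \overline{B}$ and quasi-compactness) closed cosets in $E(X)$. So $E(X)\to E(X)/N$ is a continuous $G$-equivariant map onto a finite flow, where $G$ acts via $g\mapsto gN$.

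Finally, we pass to $X$. Fix $x\in X$; the map $E(X)\to X$, $\eta\mapsto\eta(x)$, is a flow epimorphism with stabilizer $F=\{\eta:\eta(x)=x\}$, a closed subgroup. For each $N$ we obtain the finite flow $E(X)/NF$ and the epimorphism $X\to E(X)/NF$, $\eta(x)\mapsto \eta NF$. This is well defined and continuous because $X\cong E(X)/F$ as flows, using compactness and the closedness of cosets. It remains to show $X\cong\varprojlim_N E(X)/NF$. Injectivity amounts to $\bigcap_N NF=F$. This holds because $NF$ is closed and $\bigcap_N N=\{\id\}$, combined with a compactness argument in $K$ carried over to $E(X)$ using the comparison of topologies from the previous step. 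A continuous bijection of compact Hausdorff spaces is a homeomorphism, so $X$ is an inverse limit of finite flows.
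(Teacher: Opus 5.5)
There is a genuine gap in your comparison of the $\tau$-topology with the compact topology of $E(X)$, and this comparison is the heart of the argument. You assert $\cl_\tau(B)\subseteq\overline{B}$, i.e.\ that $\tau$ is finer, on the grounds that $g_ib_i$ and $b_i$ have the same limit when $g_i\to\id$. That would require multiplication to be jointly continuous at $(\id,b)$, but in $E(X)$ only the maps $\eta\mapsto\eta\xi$ are continuous. In fact the inclusion fails for some $B$ in every non-equicontinuous distal minimal flow. If it held for all $B$, the two topologies would coincide, $E(X)$ would be a compact Hausdorff semitopological (hence topological) group, and $X$ would be equicontinuous. Your appeal to an unspecified ``classical result'' that lets one ``pass from profiniteness of the Ellis group to a profinite quotient structure'' is essentially the statement being proved. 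Both later steps depend on this comparison:
\begin{itemize}
\item the clopenness of the cosets of $N$, where you derive $\tau$-closed $\Rightarrow$ closed from the wrong inclusion plus quasi-compactness;
\item $\bigcap_N NF=F$, which needs $F$, closed in $E(X)$, to also be $\tau$-closed.
\end{itemize}

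The missing idea is the correct, easy inclusion combined with a compact-to-Hausdorff argument; this is exactly what the paper does. Since $u=\id_X$ is the action of $e\in G$, take $B\subseteq E(X)$ and $b_i\in B$ with $b_i\to b$. The constant net $g_i=e$ gives $b=\lim g_ib_i$ with $g_i\to\id$, so $\overline{B}\subseteq\cl_\tau(B)$. Hence the $\tau$-topology is always coarser than the compact topology.

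The hypothesis is that $u\mathcal{M}=E(X)$ itself is profinite in $\tau$, not merely $u\mathcal{M}/H(u\mathcal{M})$ as you wrote at one point. This makes $\tau$ Hausdorff. So the identity from $E(X)$ with its compact topology to $(E(X),\tau)$ is a continuous bijection from a compact space onto a Hausdorff one, hence a homeomorphism. Therefore $E(X)$ is a profinite topological group in its usual topology.

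With this in place, the rest of your argument is correct: the cosets of each open normal $N$ are clopen, and $F$ is closed, so $\bigcap_N NF=F$. Your realization of $X$ as $E(X)/F$ via the stabilizer of a point is a hands-on replacement for the paper's citation of Glasner's Theorem I.3.3(4,5), which gives $X\cong E(X)/H$. After that, both arguments take the inverse limit of the finite flows $E(X)/NF$ (written $E(X)/(N_iH)$ in the paper).
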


\begin{proof}
By distality and \cite[Theorem I.3.3(2)]{Gla}, $E(X)$ is a group, so $E(X)$ is the unique minimal left ideal in $E(X)$ and the unique Ellis group of $X$. Since by assumption this Ellis group is profinite in the $\tau$-topology and so compact (Hausdorff), we conclude that the $\tau$-topology coincides with the usual topology on $E(X)$. Thus, $E(X)$ is a compact topological group. By minimality of $X$, the group $E(X)$ acts on $X$ transitively, so, using compactness and limits of nets, we conclude that $E(X)$ consists of homeomorphisms. By virtue of \cite[Theorem I.3.3(4,5)]{Gla}, we obtain that $(G,X)$ is isomorphic to $(G,E(X)/H)$ for some closed subgroup $H$ of $E(X)$ (where $g(\eta/H):=(g\eta)/H$). As $E(X)$ is profinite, it has a basis $\{N_i\}_{i \in I}$ of open neighborhoods of $\id$ which consists of clopen normal subgroups $N_i$. It is now clear that the flow $(G,E(X)/H)$ is isomorphic to $(G,\invlim E(X)/(N_iH))$ which is a profinite flow.
\end{proof}

\begin{corollary}\label{corollary: sep. fin. EDERdeg implies profiniteness of distal factors}
If the $G$-ambit $X$ has sep.\ fin.\ EDRdeg, then every 
distal minimal factor of $(G,X)$ is a profinite flow\footnote{Note that profiniteness of the flow $(G,X)$ is a much stronger structural information than our usual assumption that the space $X$ is $0$-dimensional (equivalently, profinite).}.
\end{corollary}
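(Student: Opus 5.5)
The plan is to combine Corollary \ref{corollary: Ellis group is profinite} with Fact \ref{fact: prof. Ellis group gives prof. flow}. The one new ingredient is that profiniteness of the Ellis group passes from $X$ to any minimal factor. So let $(G,Y)$ be a distal minimal flow and $\pi\colon X\to Y$ a $G$-flow epimorphism. By Corollary \ref{corollary: Ellis group is profinite}, the Ellis group $u\mathcal{M}$ of $X$, with the $\tau$-topology, is profinite. By Fact \ref{fact: prof. Ellis group gives prof. flow}, it then suffices to show that the Ellis group of $Y$ is profinite.

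First, $\pi$ induces a map $\hat\pi\colon E(X)\to E(Y)$ with $\hat\pi(\eta)(\pi(x)):=\pi(\eta(x))$. I would check that this is well defined, continuous, and a surjective semigroup homomorphism extending the identity on $G$. To do so, note that $\hat\pi$ is the unique continuous extension of $g\mapsto g$ for $(G,\cdot)$-ambits $E(X)\to E(Y)$. Equivalently, for $\eta=\lim g_i$ in $E(X)$, the net $g_i$ converges in $E(Y)$ to a unique limit, because $\pi(g_ix)=g_i\pi(x)$ and $\pi$ is onto. This is classical; see \cite{Gla}.

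Under $\hat\pi$, the minimal ideal $\mathcal{M}$ maps onto a minimal left ideal $\mathcal{N}$ of $E(Y)$, and $u$ maps to an idempotent $v:=\hat\pi(u)\in\mathcal{N}$. Hence $\hat\pi\restriction u\mathcal{M}\colon u\mathcal{M}\to v\mathcal{N}$ is a surjective group homomorphism, since $v\mathcal{N}=\hat\pi[u\mathcal{M}]$. Since $Y$ is distal, $E(Y)$ is a group, so $v\mathcal{N}=E(Y)$. By \cite[Theorem I.3.3]{Gla}, $E(Y)$ carries its compact topology, and (as in the proof of Fact \ref{fact: prof. Ellis group gives prof. flow}) this topology agrees with the $\tau$-topology there.

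The main point is continuity of $\hat\pi\restriction u\mathcal{M}$ from the $\tau$-topology to the $\tau$-topology. I would use the standard fact that $\hat\pi(a\circ B)\subseteq\hat\pi(a)\circ\hat\pi[B]$, which follows from continuity of $\hat\pi$ and its compatibility with the $G$-action. This gives $\hat\pi[\cl_\tau(B)]\subseteq\cl_\tau(\hat\pi[B])$.

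It follows that $v\mathcal{N}$ is a continuous homomorphic image of the compact group $u\mathcal{M}$. We already know $v\mathcal{N}$ is Hausdorff and compact, so the map factors through $u\mathcal{M}/H(u\mathcal{M})$, which here is $u\mathcal{M}$ itself since $u\mathcal{M}$ is Hausdorff. Thus $v\mathcal{N}$ is a quotient of a profinite group by a closed normal subgroup, and is therefore profinite. Fact \ref{fact: prof. Ellis group gives prof. flow} then finishes the proof.

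I expect the $\tau$-continuity step to be the main obstacle, and I would cite the standard result that Ellis groups of factors are $\tau$-continuous images, as in \cite{Rze}. Alternatively, one can bypass the $\tau$-topology altogether. Apply Corollary \ref{corollary: characterization_sep_fin_EDERdeg} and transfer the finite-image elements of $E(S_\Delta(X))$ to the relevant factors. This would show that $E(Y)$ is an inverse limit of finite groups, but it seems messier.
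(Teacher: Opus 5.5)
Your reduction is the one the paper uses: its proof simply combines Corollary~\ref{corollary: Ellis group is profinite} with Fact~\ref{fact: prof. Ellis group gives prof. flow}, tacitly passing profiniteness of the Ellis group to the factor. However, your transfer step has a genuine gap.

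You assert that on $E(Y)=v\mathcal{N}$ the $\tau$-topology agrees with the compact topology ``as in the proof of Fact~\ref{fact: prof. Ellis group gives prof. flow}'', so that $v\mathcal{N}$ is already known to be $\tau$-Hausdorff. But in that proof the agreement is \emph{deduced from} the hypothesis that the $\tau$-topology is compact Hausdorff, which is exactly what you are trying to establish. For a general distal minimal flow it is false. If the $\tau$-topology on $E(Y)$ were Hausdorff, it would coincide with the finer compact topology, and the rest of the proof of that Fact would make $Y$ equicontinuous. This fails, e.g., for the distal minimal skew product $(x,y)\mapsto(x+\alpha,y+x)$ on the $2$-torus with $\alpha$ irrational.

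Without Hausdorffness, $\tau$-continuity of $\hat\pi|_{u\mathcal{M}}$ is the routine direction, not the main obstacle. It only yields a continuous bijective homomorphism from the profinite group $u\mathcal{M}/K$, where $K:=\ker(\hat\pi|_{u\mathcal{M}})$, onto a quasi-compact $T_1$ semitopological group. Such a map need not be a homeomorphism: consider the identity from the $2$-adic integers onto the same group with the cofinite topology. So your claim that $v\mathcal{N}$ is a quotient of a profinite group by a closed normal subgroup is unsupported.

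What you need is that $\hat\pi|_{u\mathcal{M}}\colon u\mathcal{M}\to v\mathcal{N}$ is a $\tau$-\emph{quotient} map, and this can be checked directly.
Let $C\subseteq v\mathcal{N}$ be such that $B:=u\mathcal{M}\cap\hat\pi^{-1}[C]$ is $\tau$-closed, and let $c\in\cl_\tau(C)$, say $c=\lim_i g_ic_i$ with $g_i\to v$ in $E(Y)$ and $c_i\in C$.
Choose $b_i\in B$ with $\hat\pi(b_i)=c_i$, and pass to a subnet with $g_i\to p$ and $g_ib_i\to a$ in $E(X)$. Then $\hat\pi(p)=v$, $\hat\pi(a)=c$, and $a\in p\circ B\subseteq\mathcal{M}$.
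Use $B=uB\subseteq u\circ B$ together with the standard inclusions $q\circ(r\circ B)\subseteq(qr)\circ B$ and $q(r\circ B)\subseteq(qr)\circ B$. These give $ua\in w\circ B$ for $w:=upu\in u\mathcal{M}$, and $\hat\pi(w)=v$.
Hence $w^{-1}ua\in u\mathcal{M}\cap(u\circ B)=\cl_\tau(B)=B$, and $\hat\pi(w^{-1}ua)=\hat\pi(w)^{-1}\hat\pi(u)\hat\pi(a)=c$, so $c\in C$.
Since $\{v\}$ is $\tau$-closed, $K$ is $\tau$-closed. The quotient property then gives $v\mathcal{N}\cong u\mathcal{M}/K$ as semitopological groups, which is profinite, and now Fact~\ref{fact: prof. Ellis group gives prof. flow} applies.

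Your alternative route via the spaces $S_\Delta$ does not work as stated, since $Y$ is not assumed to be $0$-dimensional.
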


\begin{proof}
It follows from Corollary \ref{corollary: Ellis group is profinite} and Fact \ref{fact: prof. Ellis group gives prof. flow}.
\end{proof}

In a forthcoming joined paper of the first author with Daniel Hoffmann, Bohr compactifications below ambits will be studied. In the following short discussion on those compactifications, one can drop the assumption that $(G,X,x_0)$ is $0$-dimensional.

\begin{definition}\label{definition: Bohr compactification below ambit}
Let $(G,X,x_0)$ be an ambit. By a {\em compactification of $G$ below the ambit $(G,X,x_0)$} we mean a group homomorphism $h \colon G \to K$ with dense image for some compact group $K$ such that $h = f_1 \circ f_2$, where $f_2 \colon G 
\to X$ is given by $f_2(g):=gx_0$ and $f_1 \colon X \to K$ is an epimorphism of ambits (mapping $x_0$ to $e_K$) where the $G$-action on $K$ is given by $g\cdot k:=h(g)k$. The {\em Bohr compactification of $G$ below $(G,X,x_0)$} is a universal compactification $\mathfrak{b}$ of $G$ below $(G,X,x_0)$. Abusing terminology, the target group $K$ of $\mathfrak{b}$ is also called the Bohr compactification of $G$ below $(G,X,x_0)$.
\end{definition}

\begin{remark}
For any ambit $(G,X,x_0)$ a Bohr compactification of $G$ below $(G,X,x_0)$ exists and is unique up to isomorphism.
\end{remark}

\begin{proof}
Uniqueness follows from the density of the image. For the existence, take a set $\{\mathfrak{b}_i\colon G \to K_i\}_{i \in I}$ of all (up to isomorphism) compactifications of $G$ below $(G,X,x_0)$. Put $K:=\cl(\{(\mathfrak{b}_i(g))_{i \in I}): g \in G\}) \subseteq \prod_{i \in I} K_i$. It is clear that the map $\mathfrak{b} \colon G \to K$ given by $\mathfrak{b}(g):=(\mathfrak{b}_i(g))_{i \in I}$ is a compactification of $G$ below $(G,X,x_0)$ which is universal.
\end{proof}

Recall that whenever $(G,X)$ is a flow, then $(G,E(X), \id)$ is an ambit. Moreover, if $(G,X,x_0)$ is an ambit, then there is an epimorphism of ambits $(G,E(X),\id) \to (G,X,x_0)$ given by $\eta \mapsto \eta(x_0)$. Thus, it follows from the definitions that for any ambit $(G,X,x_0)$ there is a unique continuous epimorphism from the Bohr compactification of $G$ below  $(G,E(X), \id)$ to the one below $(G,X,x_0)$.

In the aforementioned forthcoming paper, the following fact will be proven.

\begin{fact}\label{fact: Bohr from a future paper}
Let $(G,X,x_0)$ be any ambit. Then there exists a group epimorphism and topological quotient map from the Ellis group $u\mathcal{M}$ of $X$ to the Bohr compactification of $G$ below $(G,E(X),\id)$ which factors through the largest Hausdorff quotient $u\mathcal{M}/H(u\mathcal{M})$. Thus, we obtain the induced epimorphism and topological quotient map from $u\mathcal{M}/H(u\mathcal{M})$ to the Bohr compactification of $G$ below $(G,E(X),\id)$.
\end{fact}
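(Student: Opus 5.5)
The plan is to obtain the required map as the restriction of the ambit epimorphism $f_1$ that defines the Bohr compactification. Let $\mathfrak{b}\colon G\to K$ be the Bohr compactification of $G$ below $(G,E(X),\id)$. By Definition \ref{definition: Bohr compactification below ambit} there is an epimorphism of ambits $f_1\colon E(X)\to K$ with $f_1(\id)=e_K$ and $f_1(g\eta)=\mathfrak{b}(g)f_1(\eta)$. Fix a minimal left ideal $\mathcal{M}$ and an idempotent $u\in\mathcal{M}$. The candidate map is $f_1|_{u\mathcal{M}}$.

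\textbf{Step 1: $f_1$ is a semigroup homomorphism.} Take $\eta,\zeta\in E(X)$ and write $\eta=\lim_i g_i$ with $g_i\in G$. By left continuity of the semigroup operation, $\eta\zeta=\lim_i g_i\zeta$. Hence
$$f_1(\eta\zeta)=\lim_i \mathfrak{b}(g_i)f_1(\zeta)=f_1(\eta)f_1(\zeta).$$
The first equality uses continuity of $f_1$ and equivariance. The second uses $f_1(\eta)=\lim_i f_1(g_i)=\lim_i\mathfrak{b}(g_i)$ together with continuity of multiplication in the compact group $K$. Since $u$ is idempotent, $f_1(u)=e_K$. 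Therefore $f_1|_{u\mathcal{M}}$ is a group homomorphism.

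\textbf{Step 2: surjectivity.} Pick $\eta_0\in\mathcal{M}$. Then $\mathcal{M}=E(X)\eta_0$, so $f_1[\mathcal{M}]=f_1[E(X)]f_1(\eta_0)=Kf_1(\eta_0)=K$. Consequently $f_1[u\mathcal{M}]=f_1(u)f_1[\mathcal{M}]=K$.

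\textbf{Step 3: $\tau$-continuity.} Let $B\subseteq u\mathcal{M}$ and let $x\in u\circ B$. Then $x=\lim_i g_ib_i$ with $g_i\to u$ and $b_i\in B$. We have $f_1(g_ib_i)=\mathfrak{b}(g_i)f_1(b_i)$ and $\mathfrak{b}(g_i)=f_1(g_i)\to f_1(u)=e_K$. Passing to a subnet, compactness of $K$ gives
$$f_1(x)=\lim_i f_1(b_i)\in\cl(f_1[B]).$$
Thus $f_1[\cl_\tau(B)]\subseteq\cl(f_1[B])$, which is exactly continuity of $f_1|_{u\mathcal{M}}$ from the $\tau$-topology to $K$. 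Next, $(u\mathcal{M},\tau)$ is quasi-compact, so every $\tau$-closed subset is quasi-compact. Its image is then compact, hence closed in the Hausdorff space $K$. So the map is closed, and in particular a topological quotient map.

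\textbf{Step 4: factoring through $H(u\mathcal{M})$.} Let $N:=\ker(f_1|_{u\mathcal{M}})$. It is a $\tau$-closed normal subgroup, being the preimage of $\{e_K\}$. Because $f_1|_{u\mathcal{M}}$ is a quotient map, the induced bijection $u\mathcal{M}/N\to K$ is a homeomorphism, so $u\mathcal{M}/N$ is Hausdorff. By the minimality property of $H(u\mathcal{M})$ recalled in the preliminaries, $H(u\mathcal{M})\subseteq N$. Hence the map factors through $u\mathcal{M}/H(u\mathcal{M})$. The induced map is again a continuous closed surjection, since $u\mathcal{M}/H(u\mathcal{M})$ is compact and $K$ is Hausdorff, and so it is a quotient map as well.

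The main obstacle is Step 3, namely handling the $\tau$-topology correctly. What makes it work is that $\mathfrak{b}(g_i)=f_1(g_i)$ converges to $f_1(u)=e_K$ in $K$, and this relies on Step 1 having already established $f_1(u)=e_K$.
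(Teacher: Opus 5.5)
Your argument is correct. The paper does not prove this statement: it is recorded as a Fact, with the proof deferred to a forthcoming paper of the first author with Hoffmann, so there is no in-paper proof to compare against.

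Your route is self-contained. The key observation is Step 1: the ambit epimorphism $f_1\colon E(X)\to K$ is automatically a semigroup homomorphism, by left continuity of the Ellis operation and density of $G$ in $E(X)$. This immediately gives $f_1(u)=e_K$, surjectivity on $u\mathcal{M}$, and the $\tau$-continuity estimate $f_1[\cl_\tau(B)]\subseteq\cl(f_1[B])$. Quasi-compactness of $(u\mathcal{M},\tau)$ and Hausdorffness of $K$ then make the map closed, and the minimality property of $H(u\mathcal{M})$ gives the factorization.

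Three small remarks:
\begin{enumerate}
\item In Step 3 no subnet is needed. Directly, $f_1(b_i)=\mathfrak{b}(g_i)^{-1}f_1(g_ib_i)\to e_K^{-1}f_1(x)=f_1(x)$ by continuity of the group operations in $K$.
\item In Step 4 you could avoid the characterization of $H(u\mathcal{M})$ as the smallest $\tau$-closed normal subgroup with Hausdorff quotient, and argue from its definition instead. Take $h\in H(u\mathcal{M})$ and any open $V\ni e_K$. Then $(f_1|_{u\mathcal{M}})^{-1}[V]$ is a $\tau$-open neighbourhood of $u$, so your Step 3 estimate gives $f_1(h)\in\cl(V)$. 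Regularity of the compact Hausdorff group $K$ then forces $f_1(h)=e_K$.
\item Your proof never uses universality of the Bohr compactification. It therefore works verbatim for every compactification of $G$ below $(G,E(X),\id)$.
\end{enumerate}
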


Now, we return to the context of the $0$-dimensinal ambit $(G,X,x_0)$. Let $\mathcal{M}$ be a minimal left ideal in $E(X)$ and $u$ an idempotent in $\mathcal{M}$.
The proof of Theorem 3 from \cite{KLM} easily adapts to the abstract context, yielding the next proposition; except the implications involving Bohr compactifications which require Fact \ref{fact: Bohr from a future paper}. For the reader's convenience we give a proof avoiding the model theory context used in \cite[Theorem 3]{KLM}.

\begin{proposition}\label{proposition: relationships}
Consider the following conditions.
\begin{enumerate}[leftmargin=3.7em,align=left,labelwidth=2.2em]
\item[($A'''$)] The Bohr compactification of $G$ below $(G,X,x_0)$ is profinite.
\item[($A''$)] The Bohr compactification of $G$ below $(G,E(X),\id)$ is profinite.
\item[($A'$)] $u\mathcal{M}/H(u\mathcal{M})$ is profinite.
\item[($A$)] $u\mathcal{M}$ is profinite.
\item[($B$)] $X \cong  \invlim X_i$, where the Ellis groups of all the $X_i$'s are finite.
\item[($C$)] For every finite $\Delta \subseteq G \backslash \mathcal{B}$ the Ellis group of $S_\Delta(X)$ is finite.
\item[($D$)] For every finite  $\Delta \subseteq G \backslash \mathcal{B}$ there exists $\eta \in E(S_{\Delta}(X))$ with finite image.
\end{enumerate}
Then $(D) \Rightarrow (C) \Leftrightarrow (B) \Rightarrow (A) \Rightarrow (A') \Rightarrow (A'') \Rightarrow (A''')$.
\end{proposition}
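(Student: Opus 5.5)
I will prove the implications one at a time, relying on \cite[Fact 2.1]{KLM} (a flow admitting $\eta\in E$ with finite image has finite Ellis group), \cite[Fact 2.8]{KLM} (the Ellis group of an inverse limit of flows is the inverse limit of their Ellis groups, so an inverse limit of flows with finite Ellis groups has profinite Ellis group), Remark \ref{rem: identifying_S(X)}, and Fact \ref{fact: Bohr from a future paper}.

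$(D)\Rightarrow(C)$ is exactly \cite[Fact 2.1]{KLM} applied to each flow $S_\Delta(X)$. $(C)\Rightarrow(B)$ follows from Remark \ref{rem: identifying_S(X)}, which gives $X\cong\invlim_\Delta S_\Delta(X)$. For $(B)\Rightarrow(C)$, write $X\cong\invlim X_i$ with projections $\rho_i\colon X\to X_i$. Fix a finite $\Delta$. Then $\CA(\Delta)$ is generated as a $G$-algebra by finitely many clopens $U_0,\dots,U_{k-1}$. By compactness, each $U_j$ is the preimage under some $\rho_i$ of a clopen subset of $X_i$, and since the system is directed we may use a single index $i$ for all $j$. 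Consequently every set in $\CA(\Delta)$ is such a preimage, so $\pi_\Delta$ factors through $\rho_i$ and $S_\Delta(X)$ is a factor of $X_i$. The Ellis group of a factor is a homomorphic image of the Ellis group of the source: the surjection $E(X_i)\to E(S_\Delta(X))$ maps minimal ideals onto minimal ideals and idempotents to idempotents. Hence the Ellis group of $S_\Delta(X)$ is finite. For $(B)\Rightarrow(A)$, \cite[Fact 2.8]{KLM} gives that $u\mathcal{M}$ is an inverse limit of finite groups, which is profinite. This argument needs the limit to be taken in the $\tau$-topology; since that is what \cite[Fact 2.8]{KLM} provides, I accept it here.

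$(A)\Rightarrow(A')$: if $u\mathcal{M}$ is profinite, it is in particular Hausdorff in the $\tau$-topology. Since $H(u\mathcal{M})$ is the smallest $\tau$-closed normal subgroup with Hausdorff quotient, and the trivial subgroup $\{u\}$ is $\tau$-closed because the $\tau$-topology is $T_1$, we get $H(u\mathcal{M})=\{u\}$. Thus $u\mathcal{M}/H(u\mathcal{M})\cong u\mathcal{M}$ is profinite.

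$(A')\Rightarrow(A'')$: Fact \ref{fact: Bohr from a future paper} supplies a continuous epimorphism of compact Hausdorff groups from $u\mathcal{M}/H(u\mathcal{M})$ onto the Bohr compactification below $(G,E(X),\id)$. A continuous homomorphic image of a profinite group is profinite: the target is compact, and the image of a clopen normal subgroup of finite index has finite index and is closed. Since the quotient is by a closed normal subgroup, the target is profinite.

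$(A'')\Rightarrow(A''')$ uses the same principle applied to the continuous epimorphism between the two Bohr compactifications noted just before Fact \ref{fact: Bohr from a future paper}.

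The only step that is not immediate is the factorization in $(B)\Rightarrow(C)$: showing that the finitely many generating clopens of $\CA(\Delta)$ all come from a single level $X_i$, and that the Ellis group passes to factors. I will handle the first point with the standard compactness description of clopens in inverse limits, and the second by noting that the induced map between Ellis semigroups is a surjective semigroup homomorphism.
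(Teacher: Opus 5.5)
Your proposal is correct and matches the paper's proof. Every implication except $(B)\Rightarrow(C)$ rests on the same citations. For $(B)\Rightarrow(C)$ you use the paper's own compactness argument: the generating clopens of $\CA(\Delta)$ all come from a single level $X_i$, $G$-equivariance extends this to all of $\CA(\Delta)$, so $S_\Delta(X)$ is a factor of $X_i$ and has a finite Ellis group. The paper merely phrases this via the fiber equivalence relations $G_i\subseteq F_\Delta$. Like the paper (which identifies $X/G_i$ with $X_i$), you implicitly assume the projections $X\to X_i$ are surjective, which is the standard convention for inverse limits of flows.
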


\begin{proof}
The implication  $(D)\Rightarrow(C)$  follows by \cite[Fact 2.1]{KLM};  $(C)\Rightarrow(B)$  by Remark \ref{rem: identifying_S(X)}(1);   $(B)\Rightarrow(A)$  by \cite[Fact 2.8]{KLM},   $(A)\Rightarrow(A')$  is trivial;   $(A')\Rightarrow(A'')$  by Fact \ref{fact: Bohr from a future paper};  $(A'')\Rightarrow(A''')$  by the existence of a continuous epimorphism between the two Bohr compactifications.

It remains to prove  $(B)\Rightarrow(C)$  and this is a straightforward adaptation of the proof of the same implication in the context of theories in \cite[Theorem 3]{KLM}.

By assumption and Remark \ref{rem: identifying_S(X)}, we have two presentations of $X$ as inverse limits of flows: $\invlim_{i \in I} X_i$ and $\varprojlim_{\Delta\in \CP_{fin}(G\backslash \CB)}\limits S_{\Delta}(X)$. 
For any $\Delta_0 \in \CP_{fin}(G\backslash \CB)$ and $i_0 \in I$, let $f_{\Delta_0} \colon \varprojlim_{\Delta\in \CP_{fin}(G\backslash \CB)}\limits S_{\Delta}(X) \to S_{\Delta_0}(X)$ and $g_{i_0} \colon \invlim_{i \in I} X_i \to X_{i_0}$ be the projections. 
For $\Delta \in \CP_{fin}(G\backslash \CB)$ and $i \in I$, define $F_\Delta$ and $G_i$ to be the fiber equivalence relations on $X$ induced by $f_\Delta$ and $g_i$, respectively. They are closed, $G$-invariant, and the families $\{F_\Delta\}_\Delta$ and $\{G_i\}_i$ are directed in the sense that $\Delta \subseteq \Delta'$ implies $F_{\Delta'} \subseteq F_{\Delta}$ and $i \leq i'$ implies $G_{i'} \subseteq G_i$.

Notice that $\bigcap_{\Delta \in \CP_{fin}(G\backslash \CB)} F_\Delta = \bigcap_{i \in I} G_i$ is the diagonal $D$ in $X \times X$.

\begin{clm}
For every $\Delta \in \CP_{fin}(G\backslash \CB)$ there exists $i \in I$ such that $G_i \subseteq F_\Delta$.
\end{clm}

\begin{clmproof}
Observe that for any clopen $C\subseteq X$ we can find $i_0\in I$ such that $C$ is a union of $G_{i_0}$-classes. To see this, note that $\bigcap_{i\in I}G_i=D$ implies that $\bigcup_{i\in I}G_i^c\cup (C\times C)\cup (C^c\times C^c)=X^2$, where each member of the union is open. By compactness, $\bigcup_{i\in I_0}G_i^c\cup (C\times C)\cup (C^c\times C^c)=X^2$ for some finite $I_0$, so $\bigcap_{i\in I_0}G_i\subseteq (C\times C)\cup (C^c\times C^c)$. By choosing $i_0$ to be greater than all elements of $I_0$, we get $G_{i_0}\subseteq (C\times C)\cup (C^c\times C^c)$, and the conclusion follows. 

Consider any $\Delta \in \CP_{fin}(G\backslash \CB)$, so $\Delta =\{GU_0,\dots,GU_{n-1}\}$ for some clopens $U_0,\dots,U_{n-1}$ in $X$. Then $f_\Delta^{-1}[[U_k]]$ is a clopen subset of $X$ (where $[U_k]$ is the basic clopen in $S_{\Delta}(X)$ given by $U_k$). By the previous paragraph, there exists $i_{k}  \in I$ such that $f_\Delta^{-1}[[U_k]]$ is a union of $G_{i_k}$-classes. Since $G_{i_k}$ is $G$-invariant and $f_\Delta$ is a homomorphism of $G$-flows, we get that for every $g \in G$ the preimage $f_\Delta^{-1}[[gU_k]]$ is also a union of $G_{i_k}$-classes. Choose $i \in I$ greater than all $i_k$'s for $k<n$. It follows that for every clopen $C$ in $S_\Delta(X)$ the preimage $f_\Delta^{-1}[C]$  is a union of $G_{i}$-classes. Therefore, for any $p \in S_\Delta(X)$, since 
$$f_\Delta^{-1}[\{p\}] =  \bigcap_C f_\Delta^{-1}[C]$$ 
with the intersection taken over all clopens $C$ containing $p$,
we get that $f_\Delta^{-1}[\{p\}]$ is a union of $G_i$-classes. This means that any $F_\Delta$-class is a union of $G_i$-classes, hence $G_i\subseteq F_\Delta$.
\end{clmproof}

By the claim, for each $\Delta\in\mathcal F$ we can find $i\in I$ such that the identity map $X \to X$ induces a flow epimorphism $X/G_i\to X/F_\Delta$. 
On the other hand, $X/G_i \cong X_i$ and $X/F_\Delta \cong S_\Delta(X)$ as flows. Therefore, there exists a flow epimorphism $X_i \to S_\Delta(X)$, and we are done by Facts 2.5(i) and 2.3 from \cite{KLM}.
\end{proof}

Note that, by Corollary \ref{corollary: characterization_sep_fin_EDERdeg}, the condition ($D$) above is precisely the dynamical characterization of the condition that $(G,X,x_0)$ has sep.\ fin.\ EDRdeg.
In the aforementioned forthcoming paper of the first author with Daniel Hoffmann, it will be shown that in the context of first order theories the Bohr compactification of $\Aut(\C)$ below the ambit $(\Aut(\C),S_{\bar c}(\C),\tp(\bar c/\C))$ is precisely $\Gal_{KP}(T)$, so in this context the above ($A'''$) becomes ($A''$) from \cite[Theorem 3]{KLM} (for the notation $\C$ and $S_{\bar{c}}(\C)$ see the second paragraph of Subsection \ref{subsection: first order theories}; for the definition of the Kim-Pillay Galois group $\Gal_{KP}(T)$ and related issues see \cite[Subsection 2.1]{KLM} and references in there). Profiniteness of $\Gal_{KP}(T)$ is very desirable, as it is equivalent to the equality of Shelah and Kim-Pillay strong types.  In \cite{KLM}, working in the context of theories, counterexamples for $(A''')\Rightarrow (A')$ and for $(A') \Rightarrow (B)$ were found. In fact, one can show that the latter counterexample really shows that $(A')$ does not imply $(A)$, but we will not discuss it here. The questions whether $(A) \Rightarrow (B)$ and whether $(C) \Rightarrow (D)$ remain open for theories. In Section \ref{section: questions and examples}, we will give a counterexample to $(A) \Rightarrow (B)$ in the general context of $0$-dimensional ambits. But we do not know if $(C) \Rightarrow (D)$ holds in general. In the context of a group $G$ definable in a structure $M$, by \cite[Proposition 3.4]{GPP}, the quotient map $G \to \bar{G}/\bar{G}^{00}_M$ is the definable Bohr compactification of $G$ (which is exactly the Bohr compactification below the ambit $(G,S_G(M),\tp(e/M))$), so item $(A''')$ is precisely the equality of the components $\bar{G}^{00}_M=\bar{G}^{0}_M$ (see Subsection \ref{subsection: definable groups} for the definitions and notation).

\subsubsection{Externally definable Ramsey degrees and metrizability of minimal left ideals in the Ellis semigroup}\label{subsection: joint. fin. EDERdeg}

The main goal of this subsection is to give a Ramsey-theoretic criterion of metrizability of the minimal left ideals in $E(X)$. It will be explained in Subsection \ref{subsection: KPT context} that the specialization of the obtained criterion to the classical KPT context recovers Zuecker's criterion of metrizbaility of the universal minimal flow of the group of automorphisms of a Fra\"{i}ss\'{e} structure from \cite[Theorem 8.7]{Zuc}.

We start from a Ramsey-theoretic characterization of finiteness of the minimal left ideals in $E(X)$.

\begin{definition}\label{definition: joint. fin. EDERdeg}
The $G$-ambit $X$ has {\em jointly finite externally definable  Ramsey degree} (joint.\ fin.\ EDRdeg) if there exists $l \geq 1$ such that for every 
externally definable coloring $c \colon G \to 2^n$, and for every $k \geq 1$ and $A \in [G]^k$, there exists $h \in G$ such that $|c[hA]| \leq l$.
\end{definition}

\begin{proposition}\label{proposition: joint fin EDERdeg}
The following conditions are equivalent.
\begin{enumerate}
\item $(G,X,x_0)$ has joint.\ fin.\ EDRdeg.
\item There exists $\eta \in E(X)$ whose orbit $G \eta$ is finite.
\item Some minimal left ideal in $E(X)$ is finite.
\item Every minimal left ideal in $E(X)$ is finite.
\end{enumerate}
\end{proposition}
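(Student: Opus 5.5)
I would prove the cycle $(1)\Rightarrow(2)\Rightarrow(3)\Rightarrow(4)\Rightarrow(1)$, running the argument of Proposition \ref{prop: characteriazation_S-derp} and of Theorem \ref{theorem: characterization_sep_fin_S-DERdeg} on the whole of $X$ and with a uniform bound $l$.

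\textbf{$(1)\Rightarrow(2)$.} For each $(\bar U,\bar y,A)\in\CF$, where now $\bar y\in X^n$, joint.\ fin.\ EDRdeg applied to the coloring $c_{\bar U,\bar y}$ gives $g_{\bar U,\bar y,A}\in G$ with $|c_{\bar U,\bar y}[g_{\bar U,\bar y,A}A]|\le l$. Let $\eta$ be the limit of a convergent subnet of $(g_{\bar U,\bar y,A}^{-1})$. I claim $|G\eta|\le 2^l$. Suppose instead that there are $g_0,\dots,g_{m-1}\in G$ with $m>2^l$ and the maps $g_t\eta$ pairwise distinct. Equivalently, the elements $g_t^{-1}$ act differently after $\eta$. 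For each $t<t'$ pick $y_{t,t'}\in X$ and a clopen $U_{t,t'}$ with $g_t^{-1}\eta(y_{t,t'})\in U_{t,t'}$ and $g_{t'}^{-1}\eta(y_{t,t'})\notin U_{t,t'}$. Equivalently, $\eta(y_{t,t'})\in g_tU_{t,t'}\cap g_{t'}U_{t,t'}^c$. This is an open condition on $\eta$, and there are finitely many of them. So some $h^{-1}=g_{\bar U,\bar y,A}^{-1}$ far along the net satisfies all of them, with all $U_{t,t'}$, $y_{t,t'}$ listed in $(\bar U,\bar y)$ and all $g_t\in A$. Then the colors $c(hg_t)$ for $t<m$ are pairwise distinct, because the coordinate indexed by $(U_{t,t'},y_{t,t'})$ separates $hg_t$ from $hg_{t'}$. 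Since $m>l$, this contradicts $|c[hA]|\le l$. Here the separation by one coordinate is enough, and no Ramsey/$2^l$ trick is needed, so in fact $|G\eta|\le l$.

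\textbf{$(2)\Rightarrow(3)$ and $(3)\Rightarrow(4)$.} Since $E(X)$ is a semigroup, $\cl(G\eta)=G\eta$ is a finite closed left ideal, because $E(X)\eta=\cl(G)\eta=\cl(G\eta)$ by left continuity. Hence it contains a minimal left ideal, which is finite. For any two minimal left ideals $\mathcal M,\mathcal M'$ and any $m'\in\mathcal M'$, right multiplication $\mathcal M\to\mathcal M'$, $x\mapsto xm'$, is a $G$-flow epimorphism onto $\mathcal M'$, by minimality. Symmetrically there is one in the other direction, so if one is finite then all are finite. (Alternatively, $\mathcal M'\cong\mathcal M$ as flows by standard Ellis theory.)

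\textbf{$(4)\Rightarrow(1)$.} Take $\eta$ in a finite minimal left ideal $\mathcal M$ and put $l:=|\mathcal M|$, so that $|G\eta|\le l$. Let $c$ be an externally definable coloring given by $U_i$ and $y_i$, and let $A\in[G]^k$. Define $\epsilon(g)(i)=1$ iff $\eta(y_i)\in gU_i$. The map $\epsilon(g)$ depends only on the function $g^{-1}\eta$, because $\eta(y_i)\in gU_i$ iff $g^{-1}\eta(y_i)\in U_i$. Hence $|\epsilon[G]|\le l$. The conditions $\eta(y_i)\in g_jU_i^{\epsilon(g_j)(i)}$ for $g_j\in A$ and $i<n$ are finitely many open conditions on $\eta$. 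So some $h^{-1}\in G$ satisfies them, which gives $c(hg_j)=\epsilon(g_j)$ and therefore $|c[hA]|\le l$.

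The main obstacle is $(1)\Rightarrow(2)$: one has to choose the witnesses so that distinctness of the $g_t\eta$ becomes a single finite open condition on $\eta$. This is handled by indexing the net by all finite data $(\bar U,\bar y,A)$ as above.
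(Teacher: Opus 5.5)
Your proof is correct and follows essentially the paper's route. For (1)$\Rightarrow$(2) you use the same net indexed by finite data $(\bar U,\bar y,A)$ to get $|G\eta|\le l$. For the converse you use the same open-condition argument, with your $\epsilon$-bookkeeping replacing the paper's choice of representatives $g_{j(i)}$. The only other difference is that you prove (3)$\Rightarrow$(4) directly via the surjection $x\mapsto xm'$ instead of citing that minimal left ideals are isomorphic.

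One small slip to fix: having assumed the $g_t\eta$ pairwise distinct, you then pick witnesses separating the $g_t^{-1}\eta$. Simply choose the $g_t$ from the start so that the $g_t^{-1}\eta$ are pairwise distinct, which is possible since $\{g^{-1}\eta : g\in G\}=G\eta$ has more than $l$ elements.
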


\begin{proof}
The equivalence (3) $\Leftrightarrow$ (4) follows from the general fact that any two minimal left ideals in $E(X)$ are isomorphic as $G$-flows (see \cite[Proposition I.2.5]{Gla} for the proof for $\beta G$; the proof for $E(X)$ is the same).

(2) $\Rightarrow$ (3). Since $G\eta$ is finite, the left ideal $E(X) \eta=\cl(G \eta)$ equals $G\eta$. So a minimal left ideal contained in $E(X) \eta$ is finite.

(3) $\Rightarrow$ (2). Any element in a minimal left ideal has a finite $G$-orbit.

(1) $\Rightarrow$ (2) 
Choose $l$ witnessing joint.\ fin.\ EDRdeg, that is
for every $\bar{y} =(y_0,\dots,y_{n-1}) \in X^n$, $\bar{U}=(U_0,\dots,U_{n-1}) \in \CB^n$, and finite $A \subseteq G$ there exists $g_{\bar{y},\bar{U},A} \in G$ such that $|c[g_{\bar{y},\bar{U},A}A]|\leq l$, where for $g \in G$ the value $c(g) \in 2^n$ is given by: $c(g)(i) =1$ if $y_i \in gU_i$, and  $c(g)(i) =0$ if $y_i \in gU_i^c$ (note that $c=c_{\bar y,\bar U}$ depends on $\bar y$ and $\bar U$). Let $\eta$ be the limit of a convergent subnet of the net $(g_{\bar{y},\bar{U},A}^{-1})_{\bar{y},\bar{U},A}$ (the direct pre-order on the indices is defined as in the proof of Proposition \ref{prop: characteriazation_S-derp}).

We will show that $G\eta$ is finite, even $|G\eta| \leq l$. Suppose for a contradiction that $g_0\eta,\dots,g_l\eta$ are pairwise distinct for some $g_0,\dots,g_l \in G$.  Pick $(y_{i,j})_{i<j \leq l}$ witnessing it, i.e.\ for every $i<j\leq l$ we have $g_i \eta(y_{i,j}) \ne g_j \eta(y_{i,j})$. Choose $U_{i,j}$ clopen so that $g_i \eta(y_{i,j}) \in U_{i,j}$ and $g_j \eta(y_{i,j}) \in U_{i,j}^c$ for all $i<j\leq l$. This is an open condition on $\eta$, so there are $\bar y \in X^n$, $\bar{U} \in \CB^n$, and finite $A \subseteq G$ such that:
\begin{itemize}
\item $\{g_0^{-1},\dots,g_l^{-1}\} \subseteq A$,
\item $\{(y_{i,j},U_{i,j}): i<j \leq l\} \subseteq \{(y_s,U_s): s <n\}$,
\item $g_i g_{\bar{y},\bar{U},A}^{-1}y_{i,j} \in U_{i,j}$ and $g_j g_{\bar{y},\bar{U},A}^{-1}y_{i,j} \in U_{i,j}^c$ for all $i<j\leq l$.
\end{itemize}
These conditions imply that $|c[g_{\bar{y},\bar{U},A} A]| > l$, a contradiction.

(2) $\Rightarrow$ (1) By (2), $G\eta=\{g_0\eta,\dots,g_{l-1}\eta\}$ for some $g_0,\dots,g_{l-1} \in G$. Consider an arbitrary externally definable coloring $c \colon G \to 2^n$. It is given by 
$$c(g)(s)=
\begin{cases}
1 & \mbox{ if }y_s\in gU_s\\
0 & \mbox{ if } y_s\notin gU_s
\end{cases}$$
for some $y_0,\dots,y_{n-1} \in X$ and $U_0,\dots,U_{n-1} \in \CB$. Consider any finite $A=\{a_0,\dots,a_{k-1}\} \subseteq G$. Then for every $i<k$ there exists $j(i)<l$ such that $a_i^{-1}\eta =g_{j(i)} \eta$, and so for every $s<n$ we have $a_i^{-1}\eta(y_s) =g_{j(i)} \eta(y_s)$. Hence, 
for all $i<k$ and $s<n$ we have $a_i^{-1}\eta(y_s) \in U_s$ iff $g_{j(i)} \eta(y_s) \in U_s$, which is an open condition on $\eta$. Therefore, there is $h \in G$ such that for all $i<k$ and $s<n$ we have $y_s \in ha_iU_s$ iff $y_s \in hg_{j(i)}^{-1}U_s$. So $c(ha_i) = c(hg_{j(i)}^{-1})$ for all $i<k$. Thus, $|c[hA]| \leq c[h\{g_0^{-1},\dots,g_{l-1}^{-1}\}]| \leq l$. 
\end{proof}

\begin{definition}\label{definition: joint. fin. EDERdeg with respect to Sigma}
Let $\Sigma \subseteq \CB$ (possibly infinite). We say that the $G$-ambit $X$ has {\em jointly finite externally definable  Ramsey degree with respect to $\Sigma$} (joint.\ fin.\ EDRdeg with respect to $\Sigma$) if there exists $l \geq 1$ such that for every externally definable $\Sigma$-coloring $c \colon G \to 2^n$, and for every $k \geq 1$ and $A \in [G]^k$, there exists $h \in G$ such that $|c[hA]| \leq l$.
\end{definition}

For a subset $\Delta \subseteq \CB$ let $\CA(\Delta)$ be the Boolean algebra (this time not necessarily $G$-algebra) of subsets of $G$ generated by $\Delta$. Set $E(X) |^*_{\Delta}:=\{ \eta |^*_{\Delta} : \eta \in E(X)\}$, where $\eta |^*_{\Delta} \colon X \to S(\CA(\Delta))$ is given by $\eta |^*_{\Delta}(p):= \eta(p) \cap \CA(\Delta)=\{U \in \CA(\Delta): U \in \eta(p)\}$ (bearing in mind Remark \ref{rem: identifying_S(X)}, we identify here $X$ with $S(X):=S(\mathcal{B})$). We equip $E(X) |^*_{\Delta}$ with the product topology (where $S(\CA(\Delta))$ is equipped with the Stone space topology). It is a compact space, but there is no natural $G$-action on $E(X) |^*_{\Delta}$. Note that the assignment $\eta \mapsto \eta |^*_{\Delta}$ yields a continuous surjection $\pi_{\Delta} \colon E(X) \to E(X) |^*_{\Delta}$. For any $\Delta_0 \subseteq \Delta_1 \subseteq \CB$, we have $\pi^{\Delta_1}_{\Delta_0} \colon E(X) |^*_{\Delta_1} \to E(X) |^*_{\Delta_0}$ given by $\pi^{\Delta_1}_{\Delta_0}(\eta |^*_{\Delta_1}):= \eta |^*_{\Delta_0}$ which is a continuous surjection so that $(E(X) |^*_{\Delta}, \pi^{\Delta_1}_{\Delta_0})_{\Delta_0 \subseteq \Delta_1 \subseteq \CB}$ forms an inverse system.

Now, let $\mathcal{D}$ be a family of subsets of $\CB$. We say that it is {\em cofinal} if each $U \in \CB$ belongs to some $\Delta$ from $\mathcal{D}$. If $\mathcal{D}$ is upward directed by inclusion, we can and will index it as $\{\Delta_i\}_{i \in I}$ for a directed set $(I,\leq)$ so that $i \leq i'$ iff $\Delta_i \subseteq \Delta_{i'}$.

\begin{remark}\label{remark: E(X) as an inverse limit}
Let $\mathcal{D}=\{\Delta_i\}_{i \in I}$ be a cofinal, upward directed by inclusion family of subsets of $\CB$. Then the map $\Phi \colon E(X) \to \invlim_{i} E(X) |^*_{\Delta_i}$ given by $\Phi(\eta) := (\eta |^*_{\Delta_i})_i$ is a homeomorphism. (In particular, this induces a $G$-ambit structure on the compact space $\invlim_{i} E(X) |^*_{\Delta}$.) 
\end{remark}

The next lemma yields a criterion for a natural presentation of some left ideal in $E(X)$ as an inverse limit of finite spaces.

\begin{lemma}\label{lemma: main lemma for metrizability}
Assume that there is a cofinal, upward directed by inclusion family $\mathcal{D}=\{\Delta_i\}_{i \in I}$ of subsets of $\CB$ such that for every $\Delta \in \mathcal{D}$ the $G$-ambit $X$ has joint.\ fin.\ EDRdeg with respect to $\Delta$. Then there exists $\eta \in E(X)$ such that for every $i \in I$ the subspace $(G\eta) |^*_{\Delta_i}:=\{(g\eta)|^*_{\Delta_i}: g \in G\}$ of $E(X) |^*_{\Delta_i}$ is finite and $\Phi[E(X)\eta]=\invlim_i (G\eta) |^*_{\Delta_i}$, where $\Phi$ is the homeomorphism from Remark \ref{remark: E(X) as an inverse limit}.
\end{lemma}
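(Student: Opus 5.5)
We build $\eta$ by a single net argument over all $i$ at once, imitating (1)$\Rightarrow$(2) of Proposition \ref{proposition: joint fin EDERdeg}. For each $i\in I$ fix $l_i\ge 1$ witnessing joint.\ fin.\ EDRdeg with respect to $\Delta_i$. The index set consists of tuples $(\bar y,\bar U,A,i)$ with $\bar y\in X^n$, $\bar U\in\Delta_i^n$, $A\subseteq G$ finite, ordered as in Proposition \ref{prop: characteriazation_S-derp} together with $i\le i'$. This is directed because $\mathcal D$ is upward directed. For each index we choose $g_{\bar y,\bar U,A,i}\in G$ with $|c_{\bar y,\bar U}[g_{\bar y,\bar U,A,i}A]|\le l_i$. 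The subtlety is that one witness must serve every $\Delta_j$ simultaneously, so I first want $l_i$ to work for all $\Delta_j\subseteq\Delta_i$. This is automatic: a $\Delta_j$-coloring is a $\Delta_i$-coloring, so $l_i$ bounds it too. Let $\eta$ be a limit of a convergent subnet of $(g^{-1}_{\bar y,\bar U,A,i})$.

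\textbf{Finiteness.} Fix $i_0$ and suppose $(g_0\eta)|^*_{\Delta_{i_0}},\dots,(g_{l}\eta)|^*_{\Delta_{i_0}}$ are pairwise distinct, where $l=l_{i_0}$. Each distinction $i<j$ is witnessed by some $y_{i,j}\in X$ and some $U_{i,j}\in\CA(\Delta_{i_0})$ with $g_i\eta(y_{i,j})\in U_{i,j}$ and $g_j\eta(y_{i,j})\notin U_{i,j}$. Since $U_{i,j}$ is a Boolean combination of finitely many members of $\Delta_{i_0}$, we can replace it by one such member $V_{i,j}\in\Delta_{i_0}$ on which the two points disagree. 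This is an open condition on $\eta$, so it holds for $g^{-1}_{\bar y,\bar U,A,i}$ at some index above the data $(y_{i,j},V_{i,j})$, $\{g_i^{-1}\}$, $i_0$. At that index the coloring takes more than $l$ values on $gA$, where $g=g_{\bar y,\bar U,A,i}$ and $i\ge i_0$.

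This is where the bound must be uniform, and I expect it to be the main obstacle: at an index $i\ge i_0$ we only know the bound $l_i$, not $l_{i_0}$. The fix is to build the net using, at each index, the witness for the coloring restricted to the pairs $(y_s,U_s)$ with $U_s$ in a given $\Delta_j$, for each $j$ below. Since the relevant pairs are finite, we instead index by tuples whose $\bar U$ is arranged by which $\Delta_j$ it lies in. Cleaner still is to replace $I$ by requiring, for the index $(\bar y,\bar U,A)$ with all $U_s\in\Delta_{i_0}$, a witness for $l_{i_0}$; so the tuple lies in the lowest $\Delta$ containing it. I would therefore index the net by $(\bar y,\bar U,A)$ only and pick $g$ for the coloring built from those pairs lying in $\Delta_{i_0}$, for each $i_0$ in a finite set $J\subseteq I$ carried by the index. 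Then I would apply a Ramsey-type compactness argument, or take a limit separately for each $i_0$ and diagonalize via the inverse system. This gives $|(G\eta)|^*_{\Delta_i}|\le l_i$.

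\textbf{Inverse limit.} $E(X)\eta=\cl(G\eta)$ because right multiplication by $\eta$ is continuous. Since $\Phi$ is a homeomorphism, $\Phi[\cl(G\eta)]=\cl(\Phi[G\eta])$. A point of $\invlim_i E(X)|^*_{\Delta_i}$ lies in the closure of $\Phi[G\eta]$ iff each coordinate lies in $\cl((G\eta)|^*_{\Delta_i})$, using directedness and the basic open sets of the inverse limit. Since these coordinate sets are finite, they are closed. Hence $\cl(\Phi[G\eta])=\invlim_i(G\eta)|^*_{\Delta_i}$, where the inclusion ``$\supseteq$'' uses that each basic open set of the limit is determined at one coordinate $i$, where it meets $(G\eta)|^*_{\Delta_i}$.
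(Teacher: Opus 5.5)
\textbf{There is a genuine gap.} It sits exactly where you flagged it: you never produce a single $\eta$ that is controlled on every $\Delta_i$ with its own bound.

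In your first net, an index carrying $i\ge i_0$ only guarantees at most $l_i$ colours. Since every $\Delta_{i_0}$-colouring is a $\Delta_i$-colouring, the optimal $l_i$ is non-decreasing in $i$ and may be unbounded. So $l_{i_0}+1$ distinct restrictions to $\Delta_{i_0}$ give no contradiction. None of your proposed repairs closes this:
\begin{itemize}
\item Joint.\ fin.\ EDRdeg with respect to $\Delta_j$ supplies a witness $h$ for \emph{one} colouring at a time. Nothing you invoke yields one $g$ that simultaneously makes the $\Delta_j$-part take at most $l_j$ values for all $j$ in a finite $J$.
\item A ``lowest $\Delta$ containing the tuple'' need not exist in a merely directed family.
\item Separate limits $\eta_{i_0}$ cannot be ``diagonalized via the inverse system''. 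In general $\eta_{i_1}|^*_{\Delta_{i_0}}\neq\eta_{i_0}|^*_{\Delta_{i_0}}$, so the $\eta_i$'s (or the sets $(G\eta_i)|^*_{\Delta_i}$) do not form a thread.
\item The ``Ramsey-type compactness argument'' is never specified.
\end{itemize}

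\textbf{The missing idea is the left-ideal structure of $E(X)$.}
\begin{itemize}
\item For each $i$ take $\eta_i$ with $(G\eta_i)|^*_{\Delta_i}$ finite; your single-$\Delta$ argument does give this.
\item Since $E(X)\eta_i=\cl(G\eta_i)$ and $\xi\mapsto\xi|^*_{\Delta_i}$ is continuous, the whole left ideal satisfies $(E(X)\eta_i)|^*_{\Delta_i}=(G\eta_i)|^*_{\Delta_i}$.
\item $(\xi\circ\zeta)|^*_{\Delta}$ is determined by $\xi|^*_{\Delta}$, because $(\xi\circ\zeta)|^*_{\Delta}(p)=\xi|^*_{\Delta}(\zeta(p))$.
\item Put $\eta_{\bar i}:=\eta_{i_0}\circ\dots\circ\eta_{i_{n-1}}$ and fix $j<n$. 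Write $g\eta_{\bar i}=(g\eta_{i_0}\circ\dots\circ\eta_{i_j})\circ(\eta_{i_{j+1}}\circ\dots\circ\eta_{i_{n-1}})$, where the first factor lies in $E(X)\eta_{i_j}$. This gives $|(G\eta_{\bar i})|^*_{\Delta_{i_j}}|\le|(G\eta_{i_j})|^*_{\Delta_{i_j}}|$ for all $j$ at once.
\item Take an accumulation point $\eta$ of $(\eta_{\bar i})$, indexed by finite tuples ordered by subtuple. Your own open-condition argument then gives $|(G\eta)|^*_{\Delta_i}|\le|(G\eta_i)|^*_{\Delta_i}|$ for every $i$.
\end{itemize}
Equivalently, applying the (2)$\Rightarrow$(1) argument of Proposition~\ref{proposition: joint fin EDERdeg} to $\eta_{\bar i}$ produces exactly the simultaneous witness $g$ your indexed net requires. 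Without it, that net cannot be built.

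Your inverse-limit paragraph is correct and matches the paper's final step.
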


\begin{proof}
For each $i \in I$, the proof of (1) $\Rightarrow$ (2) in Proposition \ref{proposition: joint fin EDERdeg} (in which all the clopens should be now taken from $\Delta_i$ instead of the whole $\CB$) yields $\eta_i \in E(X)$ with $(G \eta_i) |^*_{\Delta_i}$ finite.

For every $i \in I$ we have $(E(X)\eta_i)|^*_{\Delta_i} = (\cl(G\eta_i))|^*_{\Delta_i} = \pi_{\Delta_i}[\cl(G\eta_i)] = \cl(\pi_{\Delta_i}[G\eta_i])= \cl((G \eta_i) |^*_{\Delta_i}) = (G\eta_i) |^*_{\Delta_i}$, where the third equality follows from continuity of $\pi_{\Delta_i}$ and the last one from finiteness of $(G\eta_i) |^*_{\Delta_i}$.

For any finite $\bar{i}=(i_0,\dots,i_{n-1}) \in I^n$ set $\eta_{\bar i}:=\eta_{i_0} \circ \dots \circ \eta_{i_{n-1}}$. For every $j<n$, we have $|(G\eta_{\bar{i}})|^*_{\Delta_{i_j}}| \leq |(G(\eta_{i_0} \circ \dots \circ \eta_{i_j}))|^*_{\Delta_{i_j}}| \leq |(E(X)\eta_{i_j})|^*_{\Delta_{i_j}}|= |(G\eta_{i_j}) |^*_{\Delta_{i_j}}|$, where the last equality follows from the last paragraph.

Order $\bigcup_{n} I^n$ by: $\bar{i} \leq \bar{i'}$ iff $\bar{i}$ is a subtuple of $\bar{i'}$. Let $\eta$ be an accumulation point of the net $(\eta_{\bar i})_{\bar{i}}$. We claim that for every $i \in I$ the set $(G\eta) |^*_{\Delta_i}$ is finite of size at most $s_i:=|(G \eta_i) |^*_{\Delta_i}|$. Suppose for a contradiction that $|(G\eta) |^*_{\Delta_i}| > s_i$. Then there are $g_0,\dots,g_{s_i} \in G$ with $(g_j\eta)|^*_{\Delta_i} \ne (g_{j'}\eta)|^*_{\Delta_i}$ for all $j<j' \leq s_i$. So there are $p_{j,j'}$ for $j<j' \leq s_i$ such that $(g_j\eta)|^*_{\Delta_i}(p_{j,j'}) \ne (g_{j'}\eta)|^*_{\Delta_i}(p_{j,j'})$ for all $j<j'\leq s_i$.
This is an open condition on $\eta$, so there is $\bar{i}$ in some $I^n$ such that $i$ is one of the coordinates of $\bar{i}$ and $(g_j\eta_{\bar{i}})|^*_{\Delta_i}(p_{j,j'}) \ne (g_{j'}\eta_{\bar{i}})|^*_{\Delta_i}(p_{j,j'})$ for all $j<j'\leq s_i$. Thus, $|(G\eta_{\bar{i}}) |^*_{\Delta_i}| >s_i$ which contradicts the previous paragraph.

We clearly have $\Phi[G\eta]=\{ ((g \eta)|^*_{\Delta_{i}})_{i \in I}: g \in G\}$ which is dense in $\invlim_i (G \eta)|^*_{\Delta_i}$. Since $\Phi$ is continuous and  $\invlim_i (G \eta)|^*_{\Delta_i}$ is closed in $\invlim_{i} E(X) |^*_{\Delta_i}$ by finiteness of all $(G \eta)|^*_{\Delta_i}$, we conclude that $\Phi[E(X)\eta]=\Phi[\cl(G \eta)]=\invlim_i (G\eta) |^*_{\Delta_i}$.
\end{proof}

The main conclusion is the following Ramsey-theoretic criterion of metrizability of the minimal left ideals in $E(X)$.

\begin{corollary}\label{corollary: criterion for metrizability}
Assume that there is a {\em countable} cofinal, upward directed by inclusion family $\mathcal{D}=\{\Delta_i\}_{i \in I}$ of subsets of $\CB$ such that for every $\Delta \in \mathcal{D}$ the $G$-ambit $X$ has joint.\ fin.\ EDRdeg with respect to $\Delta$. Then every minimal left ideal in $E(X)$ is metrizable.
\end{corollary}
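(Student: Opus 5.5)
The natural route is to apply Lemma \ref{lemma: main lemma for metrizability} and then pass from the left ideal $E(X)\eta$ it produces to a minimal left ideal inside it. First, the countable family $\mathcal{D}=\{\Delta_i\}_{i\in I}$ satisfies all hypotheses of the lemma. So we obtain $\eta\in E(X)$ such that each $(G\eta)|^*_{\Delta_i}$ is finite and
$$\Phi[E(X)\eta]=\invlim_{i\in I}(G\eta)|^*_{\Delta_i}.$$
Here $\Phi$ is the homeomorphism from Remark \ref{remark: E(X) as an inverse limit}.

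Next, I will observe that an inverse limit of a countable inverse system of finite discrete spaces is metrizable. It is a closed subspace of the countable product $\prod_{i\in I}(G\eta)|^*_{\Delta_i}$ of finite discrete spaces, and such a product is metrizable. Since $I$ is countable, this applies. Hence $E(X)\eta$, being homeomorphic via $\Phi$ to this inverse limit, is a compact metrizable space. One point deserves care: $\Phi$ restricted to $E(X)\eta$ is a homeomorphism onto its image, and the finite sets $(G\eta)|^*_{\Delta_i}$ carry the discrete topology as subspaces of $E(X)|^*_{\Delta_i}$. Both facts are automatic, since finite subsets of Hausdorff spaces are discrete and $E(X)|^*_{\Delta_i}$ is a product of Stone spaces.

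Then $E(X)\eta$ is a closed left ideal of $E(X)$, since it equals $\cl(G\eta)$ by left continuity. So it contains a minimal left ideal $\mathcal{M}_0$, which is closed and hence metrizable as a subspace of a metrizable space. Finally, every minimal left ideal $\mathcal{M}$ of $E(X)$ is isomorphic as a $G$-flow, and in particular homeomorphic, to $\mathcal{M}_0$. This is the general fact already used in the proof of Proposition \ref{proposition: joint fin EDERdeg}, namely \cite[Proposition I.2.5]{Gla} adapted to $E(X)$. Therefore every minimal left ideal is metrizable.

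The only mildly delicate step is the second one, checking that the inverse-limit topology on $\invlim_i (G\eta)|^*_{\Delta_i}$ agrees with the subspace topology from $\invlim_i E(X)|^*_{\Delta_i}$, so that metrizability transfers through $\Phi$. This holds because both are subspace topologies of the same product. The rest is routine.
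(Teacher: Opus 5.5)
Your proof is correct and is essentially the paper's argument. You obtain $\eta$ from Lemma \ref{lemma: main lemma for metrizability}, show that $E(X)\eta\cong\invlim_{i\in I}(G\eta)|^*_{\Delta_i}$ is metrizable because $I$ is countable and each $(G\eta)|^*_{\Delta_i}$ is finite, pass to a minimal left ideal inside $E(X)\eta$, and transfer metrizability to all minimal left ideals since they are isomorphic as $G$-flows. The only cosmetic difference is that you get metrizability by embedding into a countable product of finite discrete spaces, while the paper notes that the inverse limit is compact with a countable basis.
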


\begin{proof}
Since all minimal left ideals in $E(X)$ are homeomorphic (even isomorphic as $G$-flows; cf.\ \cite[Proposition I.2.5]{Gla}), it is enough to show that some minimal left ideal is metrizable. Take $\eta$ provided by Lemma \ref{lemma: main lemma for metrizability} and pick a minimal left ideal $\mathcal{M}$ contained in the left ideal $E(X)\eta$. If we show that $E(X) \eta$ is metrizable, then so is $\mathcal{M}$, and we will be done. By the choice of $\eta$, $E(X)\eta$ is homeomorphic with $\invlim_{i\in I} (G\eta) |^*_{\Delta_i}$. Since $I$ is countable and all $ (G\eta) |^*_{\Delta_i}$'s are finite, the last inverse limit is compact and has a countable basis of open sets, so it is metrizable.
\end{proof}

\subsubsection{WAP ambits and examples}\label{subsection: examples}

Big families of examples of $0$-dimensional ambits with sep.\ fin.\ EDRdeg appear naturally in the context of first order theories \cite{KLM}, which will be discussed in Subsection \ref{subsection: first order theories}. They come from KPT theory and also from stable theories. In Subsection \ref{subsection: KPT context}, we will also see that the specialization to the classical KPT context provides lots of examples. 

In this section, we extend the family of ambits obtained for stable theories to the general context of $0$-dimensional  ambits which are WAP ({\em weakly almost periodic}, see \cite[Section II]{ElNe} for the definition and basic facts), i.e.\ we show that they have sep.\ fin.\ EDRdeg. Although the most important notion is that of having sep.\ fin.\ EDRdeg, one could wonder how restrictive the notion of having sep.\ fin.\ DRdeg is. At the end of this subsection, we outline an example of a $0$-dimensional ambit which does not have sep.\ fin.\ DRdeg (whereas, as mentioned after Definition \ref{definition: sep. fin EDERdeg}, every $0$-dimensional ambit has sep.\ fin.\ WDRdeg which makes this property useless). Surprisingly, we do not know any example of a $0$-dimensional ambit with sep.\ fin.\ DRdeg but without sep.\ fin.\ EDRdeg (see Question \ref{question: sep. fin DERdeg vs sep. fin. EDERdeg}).

\begin{proposition}\label{proposition: WAP implies finite RP}
If the $G$-ambit $X$ is WAP, then it has sep.\ fin.\ EDRdeg.
\end{proposition}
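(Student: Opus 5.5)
By Corollary \ref{corollary: characterization_sep_fin_EDERdeg}, it suffices to show the following: for every finite $\Delta \subseteq G\backslash\CB$ there is $\eta\in E(S_\Delta(X))$ with finite image. The plan has three steps:
\begin{itemize}
\item pass the WAP property from $X$ to the factors $S_\Delta(X)$;
\item use the structure of the Ellis semigroup of a WAP flow to get an $\eta$ whose image is a minimal-type set;
\item use that $S_\Delta(X)$ is generated by finitely many clopen orbits to force that image to be finite.
\end{itemize}

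\emph{Step 1.} WAP passes to factors. A function $f\in C(Y)$ on a factor pulls back to a WAP function on $X$. Weak relative compactness of $\{f\circ g: g\in G\}$ is preserved under the isometric embedding $C(Y)\hookrightarrow C(X)$. Hence each $(G,S_\Delta(X))$ is WAP, since it is a factor of $X$ via $\pi_\Delta$. So we may fix $\Delta=\{GU_0,\dots,GU_{k-1}\}$, put $Y:=S_\Delta(X)$, and assume $Y$ is WAP.

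\emph{Step 2.} We use the Ellis--Nerurkar facts from \cite{ElNe} about a WAP flow $Y$:
\begin{itemize}
\item $E(Y)$ is a semitopological semigroup consisting of continuous maps;
\item $E(Y)$ has a unique minimal ideal, which is a compact topological group $K$, and $K=u E(Y)u$ for its identity $u$;
\item minimal subflows of $Y$ are equicontinuous, and $u$ maps $Y$ onto $u[Y]$, a union of minimal sets.
\end{itemize}
Take $\eta:=u$. Then $\eta$ is a continuous idempotent, so $\im(\eta)=u[Y]$ is a closed subset of $Y$.

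\emph{Step 3 (the main obstacle): showing $u[Y]$ is finite.} The key is that the clopen sets $gU_i$ are \emph{continuous} $\{0,1\}$-valued functions on $Y$ which are WAP. For a fixed $i$, the characteristic function $\chi_i$ of $[U_i]$ is WAP. By Grothendieck's double limit criterion, there are no sequences $g_m\in G$ and $y_n\in Y$ with $y_n\in g_m[U_i]$ for exactly $m<n$. In other words, the family $\{g[U_i]\}_{g\in G}$ is a stable (NIP-like) set system on $Y$.

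We restrict this family to $u[Y]$. Points of $u[Y]$ lie in minimal equicontinuous (hence distal) subflows on which $K$ acts. Any two distinct points of $u[Y]$ are separated by some translate $g[U_i]$.

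Suppose $u[Y]$ were infinite. We pick infinitely many distinct points $z_n\in u[Y]$. By Ramsey's theorem, we pass to a subsequence where all separations are witnessed by a single index $i$. We then try to extract an order pattern for $\chi_i$, contradicting the double limit criterion.

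The intended mechanism is that $u$ is a limit of group elements $g_\alpha$. Since $u$ is continuous and idempotent, the values $u(y)$ are determined by the limits $\lim_\alpha \chi_i(g_\alpha^{-1}g^{-1}y)$. The double limit property makes these limits commute with limits over $y$. So $u$ acts on the finitely generated algebra $\CA(\Delta)$ like a \emph{definable} type: each set $d_{u(y)}$ restricted to the translates of $U_i$ is determined by finitely many parameters. This should give a bound on $|u[Y]|$ in terms of $k$, in the spirit of the stable case in \cite{KLM}.

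If this direct counting proves too delicate, we will fall back on a different argument. It uses three facts:
\begin{itemize}
\item in a WAP flow, each minimal subflow is equicontinuous, so it is a homogeneous space $K/H$;
\item $Y$ is generated by finitely many clopen orbits, so $K/H$ is a $0$-dimensional group quotient whose topology comes from $k$ continuous stable functions;
\item such a quotient must be finite, and there are only finitely many such minimal sets.
\end{itemize}
The stability in the last point again comes from the double limit criterion. The finiteness of the number of minimal sets is the part we expect to need the most care.
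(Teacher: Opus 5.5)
\paragraph{Verdict: genuine gap in Step~3.}
Your reduction is correct and is, in substance, the paper's. By Corollary~\ref{corollary: characterization_sep_fin_EDERdeg} it suffices to find, for each finite $\Delta$, an element of $E(S_\Delta(X))$ with finite image. WAP does pass to the factor $Y=S_\Delta(X)$, and an idempotent $u$ of the minimal ideal of $E(Y)$ is the right candidate. (The paper instead takes $\eta$ in the minimal ideal of $E(X)$ and pushes it down via $\Pi_\Delta$, which comes to the same thing.)

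The gap is Step~3, which carries the whole content, and your primary mechanism cannot close it. Stability of $\{g[U_i]\}_{g\in G}$ does not prevent infinitely many points from being pairwise separated by translates of a single clopen. For example, in the WAP ambit given by $\mathbb{Z}$ acting on its one-point compactification, the translates of the clopen $\{0\}$ separate infinitely many points and the incidence relation is just equality, so there is no half-graph. Hence the contradiction must use that your $z_n$ lie in a minimal set, and your Ramsey extraction never uses this. Moreover, the hoped-for bound on $|u[Y]|$ in terms of $k$ does not exist. Take $\mathbb{Z}$ acting by translation on $\mathbb{Z}/N\mathbb{Z}$ (a finite, hence WAP, ambit) with $\Delta=\{G\{0\}\}$. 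Every element of $E(Y)$ is a bijection, so every image has size $N$, while $k=1$.

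\paragraph{The fallback.}
Your fallback names the right objects, but it asserts both of its substantive claims without proof and misplaces the difficulty.

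\emph{The number of minimal subflows is not delicate.} $Y$ is an ambit with base point $y_0:=\pi_\Delta(x_0)$, and $E(Y)$ has a unique minimal left ideal $I$. For any minimal $N\subseteq Y$, the closed left ideal $\ev_{y_0}^{-1}[N]$ contains $I$, so $Iy_0\subseteq N$ and $N=Iy_0$ by minimality. Thus $M:=Iy_0$ is the unique minimal subflow. Since each $Iy$ is minimal, $\im(u)\subseteq M$.

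\emph{Finiteness of $M$ is not a stability phenomenon.} On the equicontinuous flow $M$ every continuous function is already WAP, so stability says nothing there. The finiteness comes from the compact group structure:
\begin{enumerate}
\item Write $M\cong E(M)/H$ with $E(M)$ a compact group.
\item The preimage $\tilde C\subseteq E(M)$ of a clopen $C\subseteq M$ is clopen. By compactness there is a symmetric open neighbourhood $V$ of the identity with $V\tilde C\subseteq \tilde C$, so $v\tilde C=\tilde C$ for every $v\in V$.
\item Hence the stabilizer of $\tilde C$ is an open, so finite index, subgroup, and $C$ has only finitely many $G$-translates.
\item Apply this to $[U_i]\cap M$ for $i<k$. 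The Boolean algebra $\{[A]\cap M: A\in\CA(\Delta)\}$ is then generated by finitely many sets, so it is finite.
\item This algebra separates the points of $M$, so $M$, and therefore $\im(u)$, is finite.
\end{enumerate}
This is the missing argument, and it is exactly how the paper concludes.
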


\begin{proof}
By Corollary \ref{corollary: characterization_sep_fin_EDERdeg}, we need to show that for each finite subset $\Delta$ of $G\backslash \CB$ there exists $\eta\in E(S_{\Delta}(X))$ with finite image.

WAP implies that there is a unique minimal left ideal $I$ in $E(X)$ and $I$ is a compact (topological) group (see \cite[Proposition II.5]{ElNe}). Since $I$ is a group, all elements of $I$ have the same image.

\begin{clm}
$M:=Ix_0$ is a unique minimal subflow of $(G,X)$.
\end{clm}

\begin{clmproof}
 Minimality of $M$ follows from minimality of $I$. For uniqueness consider any minimal subflow $N$ of $X$. Let $\ev_{x_0}\colon E(X) \to X$ be the evaluation at $x_0$. This is a an epimorphism of $G$-flows. So $\ev_{x_0}^{-1}[N]$ is a subflow of $E(X)$ and hence we can find a minimal left ideal $J$ in $E(X)$ which is contained in $\ev_{x_0}^{-1}[N]$. Then $J=I$ by uniqueness of a minimal left ideal. Thus, $N=\ev_{x_0}[\ev_{x_0}^{-1}[N]] \supseteq Jx_0=Ix_0=M$. So $N=M$ by minimality of $N$. 
\end{clmproof}

\begin{clm}\phantomsection
\begin{enumerate}
\item For every $x \in X$ we have $Ix =M$.
\item For every $\eta \in I$ we have $\im(\eta)=M$.
\end{enumerate}
\end{clm}

\begin{clmproof}
(1) follows from Claim 1 and minimality of $I$.

(2) The inclusion $\im(\eta)\subseteq M$ is immediate by (1). For the opposite inclusion consider any $x \in M$. By (1), it can be written as $\alpha(x)$ for some $\alpha \in I$. Since $I$ is a group, $\alpha=\eta(\eta^{-1} \alpha)$, so $x = \eta(\eta^{-1}\alpha(x)) \in \im(\eta)$. 
\end{clmproof}

The next straightforward claim is a general remark. We use here the notion introduced at the beginning of Subsection \ref{subsubsection: Ramsey and profiniteness}, in particular, the function $\pi_\Delta \colon E(X) \to S_{\Delta}(X)$ introduced after Remark \ref{rem: identifying_S(X)}.

\begin{clm}\phantomsection
\begin{enumerate}
\item Let $Y$ be a subflow of the $G$-flow $X$. Then the function $r_Y \colon \pi_\Delta[Y] \to S(\{A \cap Y: A \in \mathcal{A}(\Delta)\})$ given by  $r_Y(p):=\{A \cap Y: A \in p\}$ is an isomorphism of $G$-flows.
\item Let $\Pi_{\Delta} \colon E(X) \to E(S_{\Delta}(X))$ be the epimorphism induced by $\pi_\Delta$ via $(\Pi_\Delta(\eta))(\pi_\Delta(p)):=\pi_\Delta(\eta(p))$. Then $\im(\Pi_\Delta(\eta)) = \pi_\Delta[\im(\eta)]$ for any $\eta \in E(X)$.
\end{enumerate}
\end{clm}

Take any $\eta \in I$. By Claim 2(2) and Claim 3(2), $\im(\Pi_\Delta(\eta)) = \pi_\Delta[M]$. On the other hand, by Claim 3(1), $|\pi_\Delta[M]|=|S(\{A \cap M: A \in \mathcal{A}(\Delta)\})|$. It remains to show that $\{A \cap M: A \in \mathcal{A}(\Delta)\}$ is finite, because then  $\Pi_\Delta(\eta)$ is an element of $E(S_\Delta(X))$ with finite image.

$(G,M)$ is a WAP minimal flow and so equicontinuous by \cite[Proposition II.8]{ElNe}. Thus, $E(M)$ is a compact group and $(G,M) \cong (G,E(M)/H)$ for some closed subgroup $H$ of $E(M)$ (see \cite[Theorem I.3.3(5)]{Gla}). Let $\theta$ be a witnessing isomorphism of $G$-flows.

We can write $\Delta=\{GU_0,\dots,GU_{n-1}\}$ for some clopens $U_0,\dots,U_{n-1}$ in $X$. Then $\CA(\Delta)$ consists of the Boolean combinations of the $G$-translates of $U_0,\dots,U_{n-1}$. Thus, the Boolean $G$-algebra $\{A \cap M: A \in \mathcal{A}(\Delta)\}$ consists of the Boolean combinations of the $G$-translates of $U_0\cap M,\dots,U_{n-1}\cap M$. 

Each $U_i \cap M$ is a clopen subset of $M$, so $\theta[U_i \cap M]$ is a clopen subset of $E(M)/H$. If  $U_i \cap M \ne \emptyset$, then, by compactness of $E(M)$ and closedness of $H$, the set $\theta[U_i \cap M]$ is of the form $C/H$ where $C$ is a union of finitely many left cosets of some clopen (so finite index) subgroup of $E(M)$ containing $H$. Hence, $\theta[U_i \cap M]$ has only finitely many $G$-translates, and so does $U_i \cap M$.

By the last two paragraphs, the Boolean algebra $\{A \cap M: A \in \mathcal{A}(\Delta)\}$ is generated by finitely many sets, and so it is finite, as required.
\end{proof}

\begin{lemma}\label{lemma: criterion for the lack of sep fin DERdeg}
Let $G$ be a group and $U \subseteq G$ a subset such that for every $k \in \mathbb{N}$ there are $a_0,\dots,a_{k-1} \in G$ such that for every $i \ne j$ we have that $a_iU \cap a_jU^c$ is right syndetic. Let $\mathcal{A}$ be any $G$-algebra of subsets of $G$ which contains $U$. Then the ambit $(G,S(\mathcal{A}),p_e)$ does not have sep.\ fin.\ DRdeg.
\end{lemma}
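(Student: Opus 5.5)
The plan is to argue by contradiction directly from the definition: exhibit a single finite $\Sigma\subseteq\CB$ such that no bound $l$ works. Take $\Sigma:=\{[U]\}$, where $[U]$ is the basic clopen of $S(\mathcal{A})$ determined by $U\in\mathcal{A}$. Every definable $\Sigma$-coloring uses points $y_s=g_sp_e=p_{g_s}$ of the orbit $Gp_e$ together with the clopen $[U]$. For $h\in G$ we have $p_{g_s}\in h[U]$ iff $g_s\in hU$ iff $h^{-1}g_s\in U$. So such a coloring is given by
$$c(h)(s)=1 \iff h^{-1}g_s\in U.$$
Fix an arbitrary $l\geq 1$. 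The goal is to produce $g_0,\dots,g_{n-1}\in G$ and a finite $A\subseteq G$ such that $|c[hA]|>l$ for every $h\in G$.

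First, choose $k:=l+1$ and take $a_0,\dots,a_{k-1}$ from the hypothesis, so that $a_iU\cap a_jU^c$ is right syndetic for all $i\neq j$. These $a_i$ are pairwise distinct: if $a_i=a_j$ then $a_iU\cap a_jU^c=\emptyset$, which is not right syndetic. Put $A:=\{a_0,\dots,a_{k-1}\}$, so that $A\in[G]^k$.

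Second, right syndeticity gives, for each pair $i<j<k$, a finite $F_{i,j}\subseteq G$ with
$$\bigcup_{f\in F_{i,j}}(a_iU\cap a_jU^c)f=G.$$
Equivalently, for every $x\in G$ there is $f\in F_{i,j}$ with $xf^{-1}\in a_iU\cap a_jU^c$. Let the tuple $(g_s)_{s<n}$ enumerate the finite set $\{f^{-1}: f\in F_{i,j},\ i<j<k\}$, and let $c$ be the corresponding definable $\Sigma$-coloring described above.

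Third, I check that every translate $hA$ receives $k$ distinct colors. Fix $h\in G$ and $i<j<k$. Apply the syndeticity with $x:=h^{-1}$ to get $f\in F_{i,j}$ with $h^{-1}f^{-1}\in a_iU\cap a_jU^c$. Let $s$ be the index with $g_s=f^{-1}$. Then $(ha_i)^{-1}g_s=a_i^{-1}h^{-1}f^{-1}\in U$, so $c(ha_i)(s)=1$. Likewise $(ha_j)^{-1}g_s\in U^c$, so $c(ha_j)(s)=0$. Hence $c(ha_i)\neq c(ha_j)$ for all $i\neq j$, and $|c[hA]|=k>l$. Since $l$ was arbitrary, no $l(Gp_e,\Sigma)$ exists, and $(G,S(\mathcal{A}),p_e)$ fails sep.\ fin.\ DRdeg.

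The only delicate point is the bookkeeping of inverses and sides in this last step. The definition of the coloring involves $h^{-1}g_s$, while right syndeticity is phrased via right translates. Choosing the points as $g_s=f^{-1}$ is exactly what makes the two match. Everything else is routine, and no earlier result beyond the definitions is needed.
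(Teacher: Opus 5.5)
Your proof is correct and follows essentially the same route as the paper's. Both take $\Sigma=\{U\}$ and $k=l+1$, and both take the points $g_s$ to be the inverses of the right-translating elements that witness syndeticity of the sets $a_iU\cap a_jU^c$. The resulting coloring $c(h)(s)=1\iff g_s\in hU$ then separates $ha_i$ from $ha_j$ for every $h$ and every pair $i\neq j$. Your observation that the $a_i$ are pairwise distinct, so that $A\in[G]^k$, is a small point the paper leaves implicit.
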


\begin{proof}
Suppose for a contradiction that $(G,S(\mathcal{A}),p_e)$ has sep.\ fin.\ DRdeg. Then, for $\Sigma:=\{U\}$ there is $l=l(Gp_e,\Sigma)$ such that for every definable $\Sigma$-coloring $c \colon G \to 2^n$ and for every $k \geq 1$ and $A \in [G]^k$, there exists $h \in G$ with $|c[hA]| \leq l$.

Take $k=l+1$ and elements $a_0,\dots,a_l \in G$ such that for every $j< j' \leq l$ the set $a_jU \cap a_{j'}U^c$ is right syndetic. Then there are $g_0,\dots, g_{n-1} \in G$ such that for every $j< j' \leq l$ we have 
$$(a_jU \cap a_{j'}U^c)g_0^{-1} \cup \dots \cup (a_jU \cap a_{j'}U^c)g_{n-1}^{-1} = G.$$

Put $A:=\{a_0,\dots,a_l\}$. Let $c \colon G \to 2^n$ be given by
$$c(g)(i)=
\begin{cases}
1 & \mbox{ if }g_i\in gU\\
0 & \mbox{ if } g_i\notin gU
\end{cases}.$$
This is a definable $\Sigma$-coloring. From the previous paragraph, we get that for every $h \in G$ and for every $j < j' \leq l$, there is $i<n$ with $c(ha_{j})(i) \ne c(ha_{j'})(i)$, and hence $|c[hA]| \geq l+1$ which contradicts the choice of $l$. 
\end{proof}

\begin{example}\label{example: not sep. fin. DERdeg}
The ambit $(\mathbb{Z},\beta \mathbb{Z}, p_0)$ does not have sep.\ fin.\ DRdeg.
\end{example}

\begin{proof}
Consider the action of $\mathbb{Z}$ on the unit circle $S^1$ identified with $\mathbb{R}/\mathbb{Z}$ given by $n \cdot (z/\mathbb{Z}):=(n\alpha + z)/\mathbb{Z}$, where $\alpha$ is an irrational number. Then $(\mathbb{Z},S^1)$ is a minimal flow.  

For every integer $n \ne 0$ we have $n\alpha \notin \mathbb{Z}$, so $A_n:=([0,\frac{1}{2}) \setminus ([0,\frac{1}{2}) + n\alpha))/\mathbb{Z}$ has nonempty interior. Thus, by minimality of $(\mathbb{Z},S^1)$, the set $U_n:=\{ m \in \mathbb{Z}: m\alpha/\mathbb{Z} \in A_n\}$ is syndetic.

Put $U:=\{m \in \mathbb{Z}: m\alpha/\mathbb{Z} \in [0,\frac{1}{2})/\mathbb{Z}\}$. Note that $U_n=U \cap (n + U^c)$. Thus, for any integers $n \ne n'$ we have that $(n+U) \cap (n' + U^c) = n + (U \cap ((n'-n) +U^c)) = n+U_{n'-n}$ is syndetic by the last paragraph. So the conclusion follows from Lemma \ref{lemma: criterion for the lack of sep fin DERdeg}.
\end{proof}

Combining Examples \ref{example: WDERP does not imply DERP} and \ref{example: not sep. fin. DERdeg}, one can get the following example showing that WDRP does not imply sep.\ fin.\ DRdeg. The proof is left as an exercise.

\begin{example}
Take $U \subseteq \mathbb{Z}$ so that $U \cap (n+U^c)$ is syndetic for every nonzero integer $n$ (e.g.\ the one constructed in the proof of  Example \ref{example: not sep. fin. DERdeg}). Let $f \colon F_{a,b} \to \mathbb{Z}$ be a unique epimorphism such that $f(a)=1$ and $f(b)=0$. Define  
$$V:=\{ x \in F_{a,b}: \textrm{the reduced word of $x$ ends in $a$ and } f(x) \in U\}.$$ 
Then $V$ is not left syndetic but $V \cap a^nV^c$ is right syndetic for every integer $n \ne 0$. Thus, for the Boolean $F_{a,b}$-algebra $\mathcal{A}$ of subsets of $F_{a,b}$ generated by $V$, we get that $(F_{a,b},S(\mathcal{A}),p_e)$ has WDRP but not sep.\ fin.\ DRdeg.
\end{example}

We now give a generalization of Example \ref{example: not sep. fin. DERdeg}.

\begin{proposition}\label{proposition: No sep. fin DERdeg}
For every infinite abelian group $G$ the ambit $(G,\beta G,p_e)$ does not have sep.\ fin.\ DRdeg.
\end{proposition}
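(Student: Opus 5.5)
The plan is to reduce to Lemma \ref{lemma: criterion for the lack of sep fin DERdeg}, applied with $\mathcal{A}:=\mathcal{P}(G)$, so that $S(\mathcal{A})=\beta G$. We then need a single $U\subseteq G$ such that for every $k$ there are $a_0,\dots,a_{k-1}\in G$ with $a_iU\cap a_jU^c$ syndetic for all $i\neq j$. Since $G$ is abelian, left and right syndeticity coincide. Following Example \ref{example: not sep. fin. DERdeg}, we obtain such $U$ as the preimage of an open set under a homomorphism into an infinite compact group.

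\textbf{Step 1 (a compact target).} The Pontryagin dual $\widehat{G}$ of the discrete infinite group $G$ is infinite. Pick countably many distinct characters $\chi_0,\chi_1,\dots$ and let $h\colon G\to \mathbb{T}^{\omega}$, $h(g):=(\chi_n(g))_n$, and $K:=\cl(h[G])$. Then $K$ is a compact metrizable abelian group. It is infinite: if $h[G]$ were finite, then $G/\ker h$ would be finite, so only finitely many characters factor through $\ker h$; but every $\chi_n$ does, contradicting that they are pairwise distinct. Fix an invariant metric $d$ on $K$. The translation action $g\cdot x:=h(g)x$ of $G$ on $K$ is minimal, since $h[G]$ is dense.

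\textbf{Step 2 (syndeticity from open sets).} For any nonempty open $O\subseteq K$, the set $h^{-1}[O]$ is syndetic. Indeed, by density and compactness, $K=\bigcup_{s<m}h(g_s)^{-1}O$ for some $g_0,\dots,g_{m-1}\in G$, and pulling back gives $G=\bigcup_s g_s^{-1}h^{-1}[O]$. Now put $U:=h^{-1}[W]$ for an open $W\subseteq K$. Then
$$a_iU\cap a_jU^c\supseteq h^{-1}\big[h(a_i)W\setminus \cl(h(a_j)W)\big],$$
using that $h^{-1}[K\setminus \cl(V)]\subseteq h^{-1}[V]^c$. So it suffices to have a fixed open $W$ and elements $a_i$ with $t_{ij}:=h(a_j)^{-1}h(a_i)$ satisfying $\operatorname{int}(W\setminus\cl(t_{ij}W))\neq\emptyset$ for all $i\neq j$. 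Since $h[G]$ is infinite, we may choose the $a_i$ with pairwise distinct images. It is then enough that $W\not\subseteq\cl(tW)$ for every $t\neq e$ in $K$: in that case $W\setminus\cl(tW)$ is a nonempty open set.

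\textbf{Step 3 (the main obstacle: choosing $W$).} A single ball does not work in general. In the profinite case, for example $G=\bigoplus_\omega \mathbb{Z}/2$, balls are cosets of open subgroups, so their translates either coincide or are disjoint, and the required $t$'s get trapped in a finite set. I would instead take an aperiodic union of small balls. Choose distinct points $x_n\to e$ with $x_n\neq e$, and radii $r_n>0$ decreasing so fast that the closed balls $\bar B(x_n,r_n)$ are pairwise disjoint, avoid $e$, and each $r_{n+1}$ is much smaller than both $r_n$ and $d(x_n,e)$. Put $W:=\bigcup_n B(x_n,r_n)$.

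For $t\neq e$ I would argue as follows. Choose $n$ with $r_n\ll d(t,e)$. If $B(x_n,r_n)\subseteq\cl(tW)$, then by connectedness-free covering arguments using the radius gaps (a ball of radius $r_n$ can only be covered, up to boundary, by the translate of a ball of comparable or larger radius), we would get $tB(x_m,r_m)\supseteq B(x_n,r_n)$ essentially for some $m\le n$ with $r_m\gtrsim r_n$. Iterating this over all large $n$ forces $t$ to be within $O(r_n)$ of $e$ for infinitely many $n$, hence $t=e$, a contradiction.

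This covering/scale bookkeeping is where I expect the real work. If it becomes unwieldy, I would first try the alternative of working directly in $\beta G$, picking $U$ by a diagonal construction along an enumeration of the finite-index data. Once $W$ is in hand, Lemma \ref{lemma: criterion for the lack of sep fin DERdeg} finishes the proof.
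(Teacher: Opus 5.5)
\textbf{Step 3 is a genuine gap.} Your Steps 1 and 2 are correct. With $U=h^{-1}[W]$ one has exactly $a_iU\cap a_jU^c=h^{-1}[h(a_i)W\setminus h(a_j)W]$, and nonempty open subsets of $K$ pull back to syndetic sets. So Lemma \ref{lemma: criterion for the lack of sep fin DERdeg} applies once a suitable $W$ exists.

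Step 3 is the whole difficulty, and it is asserted rather than proved. Your sketch rests on the claim that a ball of radius $r_n$ can only be covered, up to boundary, by a translate of a ball of comparable or larger radius. This is false in exactly the profinite case that motivated Step 3. For an invariant ultrametric, balls are cosets of open subgroups, and every ball is a finite disjoint union of strictly smaller balls. In $\mathbb{Z}_2$ one even has the disjoint decomposition $2^j\mathbb{Z}_2=\{0\}\cup\bigcup_{k\geq j}(2^k+2^{k+1}\mathbb{Z}_2)$. This writes a ball as a point plus disjoint balls of shrinking radii accumulating at that point, which is precisely the shape of $t\,\cl(W)$. So radius gaps alone cannot rule out $W\subseteq\cl(tW)$. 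The iteration ``forcing $t$ to be $O(r_n)$-close to $e$'' is therefore not justified, and the existence of $W$ remains open in your proposal.

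\textbf{How to close the gap.} You do not need a trivial stabilizer, and no scale bookkeeping is required.
\begin{enumerate}
\item[(a)] In a compact group, if $C$ is closed and $sC\subseteq C$, then $sC=C$. Indeed, $\{u\in K: uC\subseteq C\}$ is a closed subsemigroup containing $s$, and $s^{-1}\in\cl\{s^n:n\geq 1\}$. Hence $h(a_i)W\subseteq\cl(h(a_j)W)$ forces $h(a_j)^{-1}h(a_i)\in S:=\{u\in K: u\,\cl(W)=\cl(W)\}$, which is a closed subgroup.
\item[(b)] If $\cl(W)$ is not open, then $S$ has infinite index. A closed subgroup of finite index is open, and then $\cl(W)=S\,\cl(W)$ would be open.
\item[(c)] Since $h[G]$ is dense, it meets infinitely many cosets of $S$; otherwise $K$ would be a finite union of cosets of $S$. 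Choose $a_0,a_1,\dots$ with the $h(a_i)$ in pairwise distinct cosets. Then each $h(a_i)W\setminus\cl(h(a_j)W)$ with $i\neq j$ is nonempty and open, and Step 2 finishes.
\item[(d)] An open $W$ with non-open closure exists because an infinite compact metrizable $K$ is not extremally disconnected. For example, take nonempty open sets $V_n$ shrinking to $e$, with pairwise disjoint closures not containing $e$, and put $W:=\bigcup_n V_{2n}$. Then $e\in\cl(W)$, while each $V_{2n+1}$ misses $\cl(W)$.
\end{enumerate}

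\textbf{Comparison with the paper.} With this repair your route differs from the paper's and is more elementary. The paper works inside the non-metrizable, extremally disconnected flow $M(G)$. There it obtains a clopen set with infinite orbit, using that compact homogeneous extremally disconnected spaces are finite and that $M(G)$ is not metrizable. It then uses an invariant measure to see that distinct translates of that clopen differ. In your metrizable $K$ all clopens may have finite orbits, but metrizability is exactly what supplies open sets with non-open closure.
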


\begin{proof}
Consider the universal minimal $G$-flow $X:=M(G)$ contained in $\beta G$.

\begin{clm}
There exists a clopen subset $U_0$ of $X$ with infinite $G$-orbit.
\end{clm}

\begin{clmproof}
Suppose every clopen $U_0 \subseteq X$ has finite orbit. Then for every finite $\Delta \subseteq G \backslash \CB$ (i.e.\ a finite collection of orbits of clopens in $X$) the $G$-algebra $\CA(\Delta)$  generated by $\bigcup \Delta$ is finite, and so is its Stone space $S_{\Delta}(X)$. Since by Remark \ref{rem: identifying_S(X)} $X$ is isomorphic to  the inverse limit of the finite flows $S_{\Delta}(X)$ for $\Delta$ ranging over the finite subsets of $G \backslash \CB$, the flow $(G,X)$ is a profinite flow. Since all finite flows are equicontinuous, we conclude that so is $X$. 
Thus, by \cite[Theorem I.3.3(5)]{Gla}, $X$ is a compact homogeneous space. On the other hand, $X=M(G)$ is an extremally disconnected space (e.g.\ see \cite{Dou}). Finally, every compact homogeneous extremally disconnected space is finite (see \cite[Corollary 4]{Rez}). Thus, we conclude that $X=M(G)$ is finite. But this contradicts non-metrizability of $M(G)$ which holds by \cite[Theorem A2.2]{KPT}.
\end{clmproof}

\begin{clm}
For every $g,g' \in G$ such that $gU_0 \ne g'U_0^c$ we have $gU_0 \cap g'U_0^c \ne \emptyset$.
\end{clm}

\begin{clmproof}
Suppose $gU_0 \cap g'U_0^c = \emptyset$. Then $gU_0 \subsetneq g'U_0$, so $U_0 \subsetneq g^{-1}g'U_0$. By amenability of $G$, there is a regular Borel  probability measure $\mu$ on $X$ invariant under $G$. Since $g^{-1}g'U_0 \setminus U_0 \ne \emptyset$ is open and $\mu$ is invariant, we have $\mu(g^{-1}g'U_0 \setminus U_0)>0$, and $\mu(g^{-1}g'U_0)=\mu(U_0) +\mu(g^{-1}g'U_0 \setminus U_0) = \mu(g^{-1}g'U_0) + \mu(g^{-1}g'U_0 \setminus U_0)>\mu(g^{-1}g'U_0)$, a contradiction.
\end{clmproof}

By Claims 1 and 2, there exists an infinite sequence $(g_i)_{i \in \omega}$ of elements of $G$ such that $g_iU_0 \cap g_jU_0^c \ne \emptyset$ for all $i \ne j$. Now, pick any $x_0 \in X$, and consider any $i \ne j$ from $\omega$. Since $g_iU_0 \cap g_jU_0^c$ is a nonempty open subset of the minimal $G$-flow $X$, we easily get that $U_{ij}:=\{g\in G: gx_0 \in g_iU_0 \cap g_jU_0^c \}$ is left syndetic, equivalently right syndetic as $G$ is abelian. An elementary computation shows that $U_{ij}= g_iU \cap g_jU^c$, where $U:=\{g \in G: gx_0 \in U_0\}$. So we have shown that $U$ satisfies the assumption of Lemma \ref{lemma: criterion for the lack of sep fin DERdeg}, and hence $(G,\beta G,p_e)$ does not have sep.\ fin.\ DRdeg.
\end{proof}

A natural question arises if the abelianity assumption can be generalized to amenability. Finally, one can ask if amenability can be dropped:

\begin{question}\label{question: sep. fin. EDERdeg for beta G}
Is it true that for every infinite group $G$ the ambit $(G,\beta G,p_e)$ does not have sep.\ fin.\ DRdeg.
\end{question}

It is well-known (e.g.\ follows immediately from \cite[Theorem A2.2]{KPT}), that an infinite discrete group is not extremely amenable, which means that the ambit $(G, \beta G, p_e)$ does not have a fixed point, equivalently it does not have WDRP. A positive answer to the above question would be a strengthening of this result, because the negation of sep.\ fin.\ DRdeg implies the negation of DRP which for the ambit $(G,\beta G,p_e)$ is equivalent to the negation of WDRP by Corollary \ref{corollary: right invariant G-algebras}.

We also state

\begin{question}\label{question: sep. fin DERdeg vs sep. fin. EDERdeg}
Does sep.\ fin.\ DRdeg imply sep.\ fin.\ EDRdeg (which would mean that they are equivalent)?
\end{question}

\section{Specializations}\label{section: specializations}

In this section, we describe and study three specializations of the abstract context developed in Section \ref{section: main}.

In Subsection \ref{subsection: first order theories}, we explain that in the context of first order theories the various notions of definable colorings and definable Ramsey properties as well as the main results from Section  \ref{section: main} (except those from Subsection \ref{subsection: joint. fin. EDERdeg}) specialize to the corresponding notions and results from \cite{KLM}. This was the original reason to define the notions from Section  \ref{section: main} in the way we did, and the original motivation was to make the ideas from \cite{KLM} fully understandable to the readers who are not familiar with model theory by adapting them to a purely topological dynamics context. The abstract context from Section \ref{section: main} specializes also to definable groups and to classical KPT theory (which will be described in Subsections \ref{subsection: definable groups} and \ref{subsection: KPT context}), which makes the subject even more interesting.

\subsection{First order theories}\label{subsection: first order theories}

We recall the notions of definable colorings from \cite{KLM}. The reader is advised to read the short introductory part of Section 3 in \cite{KLM} where [externally] definable colorings are defined and discussed, including an explanation of terminology.

Let $T$ be a complete first order theory. Let $\mathfrak{C}$ be a monster model of $T$ and $\bar c$ an enumeration of $\C$. For a tuple of variables $\bar y$ by $S_{\bar{y}}(\C)$ we denote the space of complete types over $\C$ in variables $\bar y$; $S_{\bar{c}}(\C)$ denotes the space of complete types over $\C$ extending $\tp(\bar{c}/\emptyset)$.

For a finite tuple $\bar a\in \C$ and a subset $C$ of $\C$, we write ${C\choose\bar a}$ for the set of realizations of $\tp(\bar a)$ which are contained in $C$, that is, 
$${C\choose\bar a}:=\{\bar a'\in C^{|\bar a|} : \bar a'\equiv \bar a\}.$$


\begin{definition}\phantomsection\label{definition: of definable colorings}\cite[Definition 3.1]{KLM}
\begin{enumerate}
\item A coloring $c\colon {C\choose\bar a}\to 2^n$ is {\em definable} if there are formulas with parameters $\varphi_i(\bar x)$, $i<n$, such that:
$$c(\bar a')(i)= \left\{
\begin{array}{cl}
1, & \models \varphi_i(\bar a')\\
0, & \models \lnot\varphi_i(\bar a')
\end{array}
\right.$$
for any $\bar a'\in{C\choose \bar a}$ and $i<n$. 
\item A coloring $c:{C\choose\bar a}\to 2^n$ is {\em externally definable} if there are formulas without parameters $\varphi_i(\bar x,\bar y)$ and types $p_i(\bar y)\in S_{\bar y}(\C)$, $i<n$, such that:
$$c(\bar a')(i)= \left\{
\begin{array}{cl}
1, &  \varphi_i(\bar a',\bar y)\in p_i(\bar y)\\
0, & \lnot\varphi_i(\bar a',\bar y)\in p_i(\bar y)
\end{array}
\right.$$
for any $\bar a'\in{C\choose\bar a}$ and $i<n$. 
\item If $\Delta$ is a set of formulas in variables $\bar x$ and $\bar y$ and $q \in S_{\bar y}(\emptyset)$, then an externally definable coloring $c$ is called {\em externally definable $(\Delta,q)$-coloring} if all the formulas $\varphi_i(\bar x,\bar y)$'s defining $c$ are taken from $\Delta$ and $p_i(\bar y) \in S_{q}(\C)$ for $i<n$.
\end{enumerate}
\end{definition}

\begin{remark}\phantomsection\label{remark: ext def coloring}\cite[Remark 3.3]{KLM}
\begin{enumerate}
\item An externally definable coloring $c\colon {C\choose\bar a}\to 2^n$ given by $\varphi_i(\bar x,\bar y)$ and $p_i(\bar y)\in S_{\bar y}(\C)$, $i<n$, can be defined by using $n$ formulas $\psi_i(\bar x,\bar z)$, $i<n$, and only one type $p(\bar z)\in S_{\bar z}(\C)$. Moreover, we can require that $|\bar z|=|\bar c|$ and $p(\bar z) \in S_{\bar c}(\C)$. 
\item In the definition of definable coloring, we can assume that all formulas $\varphi_i(\bar x)$, $i<n$, have the same parameters $\bar e$, and then the coloring is externally definable witnessed by the single realized type $p(\bar y) := \tp(\bar e/\C)$. We can also assume that $\bar e = \bar c$.
\item In the definition of externally definable $(\Delta,q)$-coloring, without loss of generality we can assume that $|\bar y|=|\bar c|$ and $q(\bar y)=\tp(\bar c/\emptyset)$. Whenever $|\bar y|=|\bar c|$ and $q(\bar y) = \tp(\bar c/\emptyset)$, instead of ``externally definable $(\Delta,q)$-coloring'' we will just say {\em externally definable $\Delta$-coloring}.
\end{enumerate}
\end{remark}

We now describe a correspondence between [externally] definable colorings of the copies of a finite tuple from $\C$ in the above sense and [externally] definable colorings of elements of $\Aut(\C)$ with respect to the ambit $(\Aut(\C),S_{\bar c}(\C),\tp(\bar c/\C))$ in the sense of Definitions \ref{definition: definable colorings} and \ref{definition: Sigma-colorings}.

Consider any [externally] definable coloring $c\colon {\C\choose\bar a}\to 2^n$. By Remark \ref{remark: ext def coloring}, it is defined by formulas $\varphi_i(\bar x,\bar y)$ with $|\bar y| = |\bar c|$ and types $p_i(\bar y) \in S_{\bar c}(\C)$, where: if $c$ is definable, the types $p_i$ are realized; if $c$ is an externally definable $\Delta$-coloring, the formulas $\varphi_i(\bar x,\bar y)$ belong to $\Delta$. With $c$ we associate a coloring $c' \colon \Aut(\C) \to 2^n$ via 
$$c'(\sigma)(i) := c(\sigma(\bar a))(i).$$

\begin{remark}\label{remark: c to c'}
If $c$ is an [externally] definable coloring or an externally definable $\Delta$-coloring, then $c'$ is respectively an [externally] definable coloring or an externally definable $\Delta'$-coloring, where $\Delta':=\{[\varphi(\bar a,\bar y)]: \varphi(\bar x,\bar y) \in \Delta\}$.
\end{remark}

\begin{proof}
It is straightforward: for clopens $U_0,\dots,U_{n-1}$ in Definition \ref{definition: definable colorings} we take the sets $[\varphi_0(\bar a,\bar y)], \dots,[\varphi_{n-1}(\bar a,\bar y)]$.
\end{proof}

In the opposite direction, consider any [externally] definable coloring $c' \colon \Aut(\C) \to 2^n$ in the sense of Definition \ref{definition: definable colorings}. It is defined by some clopens $U_0=[\varphi_0(\bar a,\bar y)], \dots U_{n-1} = [\varphi_{n-1}(\bar a,\bar y)]$ and types $q_0(\bar y),\dots,q_{n-1}(\bar y) \in S_{\bar c}(\C)$. If $c'$ is an externally definable $\Delta'$-coloring for some finite collection $\Delta'=\{V_0,\dots, V_{m-1}\}$ of clopens in $S_{\bar c}(\C)$, then we require that $U_0,\dots,U_{n-1} \in \Delta'$ and we choose $\Delta=\{\psi_0(\bar a,\bar y),\dots,\psi_{m-1}(\bar a, \bar y)\}$ so that $V_i = [\psi_i(\bar a, \bar y)]$ for all $i <m$.\footnote{Note that for an infinite $\Delta'$ we would not be able to find a common finite tuple $\bar a$ so that each $U \in \Delta'$ equals $[\psi(\bar a, \bar y)]$ for some formula $\psi$.}
With $c'$ we associate a coloring  $c\colon {\C\choose\bar a}\to 2^n$ via 
$$c(\sigma(\bar a))(i) := c'(\sigma)(i).$$

\begin{remark}\label{remark: c' to c}
If $c'$ is an [externally] definable coloring or an externally definable $\Delta'$-coloring for some finite collection $\Delta'$ of clopens in $S_{\bar c}(\C)$, then $c$  is respectively an [externally] definable coloring or an externally definable $\Delta$-coloring.
\end{remark}

\begin{proof}
Straightforward.
\end{proof}

\begin{definition}\phantomsection\cite[Definitions 3.11 and 3.18]{KLM}\label{definition: EDEERP  and sep. fin. EDEERdeg from KLM}
\begin{enumerate}
	\item The theory $T$ has {\em [E]DEERP} if for any finite $B \subseteq \C$ and finite tuple $\bar a\subseteq B$, any $n<\omega$ and any [externally] definable coloring $c\colon {\C\choose \bar a}\rightarrow 2^n$, there exists $B' \in {\C\choose B}$ (i.e., a copy of $B$ under an automorphism of $\C$) such that $|c[{B'\choose \bar a}]|=1$.
	
	\item The theory $T$ has {\em separately finite EDEERdeg} if for any finite tuple $\bar a$ and finite set of formulas $\Delta$ there exists $l=l(\bar a,\Delta)<\omega$ such that for any finite $B\subseteq \C$ containing $\bar a$, for any $n<\omega$, and for any externally definable $\Delta$-coloring $c\colon{\C\choose \bar a}\rightarrow 2^n$, there exists $B'\in{\C\choose B}$ such that $|c[{B'\choose \bar a}]|\le l$.
\end{enumerate}
\end{definition}

\begin{proposition}\phantomsection\label{proposition: specialization of EDERP to theories} 
\begin{enumerate}
	\item The theory $T$ has [E]DEERP if and only if the ambit $(\Aut(\C),S_{\bar c}(\C), \tp(\bar c/\C))$ has [E]DRP.
	\item The theory $T$ has sep.\ fin.\ EDEERdeg if and only if the ambit $(\Aut(\C),S_{\bar c}(\C),\tp(\bar c/\C))$ has sep.\ fin.\ EDRdeg.
\end{enumerate}
\end{proposition}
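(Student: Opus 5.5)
The plan is to use the two translation remarks (Remarks \ref{remark: c to c'} and \ref{remark: c' to c}) to pass between colorings of copies of $\bar a$ and colorings of $\Aut(\C)$, and then check that the homogeneity requirements match. The key dictionary is this. A finite $B\subseteq\C$ containing $\bar a$ can be written as $B=\bar a\cup\bigcup_{j<k}\tau_j(\bar a)$ with $\tau_j\in\Aut(\C)$, after enlarging $B$ if necessary (for instance by replacing $\bar a$ with a tuple enumerating $B$; see below). Conversely, a finite $A=\{\tau_0,\dots,\tau_{k-1}\}\subseteq\Aut(\C)$ gives the finite set $B_A:=\bigcup_j\tau_j(\bar a)$. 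A copy $B'=\sigma(B)$ satisfies ${B'\choose\bar a}=\sigma[{B\choose \bar a}]$. Moreover, $c'[\sigma A]=c[\{\sigma\tau_j(\bar a)\}_j]\subseteq c[{\sigma(B_A)\choose\bar a}]$.

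For ($\Rightarrow$) of (1), I would take an [externally] definable coloring $c'$ of $\Aut(\C)$, given by clopens $U_i=[\varphi_i(\bar a,\bar y)]$ with a common finite tuple $\bar a$ (finitely many clopens, so this is possible), and points $q_i$. The associated $c$ is [externally] definable by Remark \ref{remark: c' to c}. For finite $A\subseteq\Aut(\C)$, apply [E]DEERP to $B_A$ to get $\sigma$ with $|c[{\sigma(B_A)\choose \bar a}]|=1$. Then $|c'[\sigma A]|=1$ by the inclusion above.

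For ($\Leftarrow$), given $c\colon{\C\choose\bar a}\to 2^n$ and $B\supseteq\bar a$, I form $c'$, which is [externally] definable by Remark \ref{remark: c to c'}. I choose $A$ finite with $\{\tau(\bar a):\tau\in A\}={B\choose\bar a}$, which is possible since every element of ${B\choose \bar a}$ is an automorphic image of $\bar a$ by strong homogeneity of $\C$. Applying [E]DRP gives $h$ with $|c'[hA]|=1$, i.e.\ $c$ is constant on $h[{B\choose\bar a}]={h(B)\choose\bar a}$, and $B'=h(B)$ is the required copy.

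For (2), I would repeat the same argument while tracking the degree bound. For ($\Rightarrow$), given finite $\Sigma\subseteq\CB$, write its members as $[\psi(\bar a,\bar y)]$ for one tuple $\bar a$ and a finite set of formulas $\Delta$, and take $l(\Sigma):=l(\bar a,\Delta)$. An externally definable $\Sigma$-coloring $c'$ yields an externally definable $\Delta$-coloring $c$ (Remark \ref{remark: c' to c}), and the inclusion above transfers the bound $\le l$. For ($\Leftarrow$), given $\bar a$ and $\Delta$, take $\Sigma:=\Delta'$ as in Remark \ref{remark: c to c'}, which is finite, set $l(\bar a,\Delta):=l(X,\Sigma)$, and argue as in (1) with the equality $c'[hA]=c[{h(B)\choose\bar a}]$.

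The main obstacle is to be sure that the equality ${h(B)\choose\bar a}=h[{B\choose\bar a}]$ and the choice of $A$ covering ${B\choose\bar a}$ are legitimate. This needs ${B\choose\bar a}$ to be finite, which holds since $B$ is finite, and each of its elements to be realized as $\tau(\bar a)$, which follows from strong homogeneity. I also need to check that, in passing from $c'$ to $c$, the value $c(\sigma(\bar a))$ is well defined, i.e.\ that it depends only on $\sigma(\bar a)$. This holds because the clopens are of the form $[\varphi(\bar a,\bar y)]$: the condition $q_i\in\sigma[\varphi_i(\bar a,\bar y)]$ means $\varphi_i(\sigma(\bar a),\bar y)\in q_i$, which depends only on $\sigma(\bar a)$.
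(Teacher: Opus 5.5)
Your proposal is correct and is essentially the paper's proof. It uses the same translation $c\leftrightarrow c'$ via Remarks~\ref{remark: c to c'} and~\ref{remark: c' to c}: in one direction you pick a finite $A$ with $\{\tau(\bar a):\tau\in A\}={B\choose\bar a}$, and in the other you take $B$ to be the set of coordinates of the tuples $\tau(\bar a)$ for $\tau\in A$. Your version of (2), which the paper leaves as an exercise, is the same argument with the bound $l$ carried along.

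One small point to tidy up, which the paper also passes over: put the coordinates of $\bar a$ itself into $B_A$, so that the hypothesis $\bar a\subseteq B$ of [E]DEERP and of sep.\ fin.\ EDEERdeg actually holds. This costs nothing, since the inclusion $c'[\sigma A]\subseteq c[{\sigma(B)\choose\bar a}]$ survives enlarging $B$.
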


\begin{proof}
It essentially follows from the above correspondence between colorings and Remarks \ref{remark: c to c'} and \ref{remark: c' to c}. We sketch the proof of (1), leaving (2) as an exercise. 

($\Leftarrow$) Given an [externally] definable coloring $c$ of copies of $\bar a$ and a finite $B \subseteq \C$ containing $\bar a$, the set ${B\choose \bar a}$ equals $\{\sigma_0(\bar a),\dots, \sigma_{k-1}(\bar a)\}$ for some $\sigma_0,\dots, \sigma_{k-1} \in \Aut(\C)$ which yield a finite set $A :=\{\sigma_0,\dots, \sigma_{k-1}\} \subseteq \Aut(\C)$. Then, by [E]DRP, for the associated coloring $c'$ we find $\sigma \in \Aut(\C)$ such that $|c'[\sigma A]|=1$. This implies that for $B':=\sigma[B]$ we have $|c[{B'\choose \bar a}]|=1$.

($\Rightarrow$) Given an [externally] definable coloring $c'$ of $G$ and a finite $A \subseteq \Aut(\C)$, we find a finite tuple $\bar a$ and the associated coloring $c$ of the copies of $\bar a$ as described above. Set $B$ to be the union of all coordinates of all tuples $\sigma(\bar a)$ for $\sigma \in A$. By [E]DEERP, there is a copy $B'$ of $B$ via some $\sigma \in \Aut(\C)$ such that $|c[{B'\choose \bar a}]|=1$. This implies that $|c'[\sigma A]|=1$.
\end{proof}

Similarly, the condition that $(\Aut(\C),S_{\bar c}(\C), \tp(\bar c/\C))$ has joint.\ fin.\ EDRdeg with respect to each family $\Sigma$ of clopens in $S_{\bar c}(\C)$ of the form $\Sigma=\{[\varphi(\bar a,\bar y)]: \varphi(\bar x,\bar y)\}$ for some (finite) $\bar a$ (see Definition \ref{definition: joint. fin. EDERdeg with respect to Sigma}) is equivalent to the strengthening of Definition \ref{definition: EDEERP  and sep. fin. EDEERdeg from KLM}(2) where finite $\Delta$ is replaced by the set of all formulas in variables $\bar x$ corresponding to $\bar a$ and variables $\bar y$ corresponding to $\bar c$. In particular, Lemma \ref{lemma: main lemma for metrizability} specializes to a new result in the context of first order theories. The condition that $(\Aut(\C),S_{\bar c}(\C), \tp(\bar c/\C))$ has joint.\ fin.\ EDRdeg (see Definition \ref{definition: joint. fin. EDERdeg}) is equivalent to the further strengthening of  Definition \ref{definition: EDEERP  and sep. fin. EDEERdeg from KLM}(2) in which one requires the existence of a common $l$ good for all finite tuples $\bar a$ in $\C$ and where finite $\Delta$ is replaced by the set of all formulas in variables $\bar x$ corresponding to $\bar a$ and variables $\bar y$ corresponding to $\bar c$. As in the case of Lemma \ref{lemma: main lemma for metrizability}, Proposition \ref{proposition: joint fin EDERdeg} also specializes to a new result in the context of first order theories.

Bearing in mind Proposition \ref{proposition: specialization of EDERP to theories}, it is easy to see that Corollary \ref{cor: amenability_derp}(3) and Corollary \ref{corollary: characterization_sep_fin_EDERdeg} specialize to Theorems 3.14(ii) and 3.20 in \cite{KLM}, respectively. Similarly, using additionally Remark \ref{remark: DEERP iff WDEERP}, we see that  Corollary \ref{cor: amenability_derp}(1,2) specializes to Theorem 3.16(ii) in \cite{KLM}. In the other direction, Section 5 of \cite{KLM} yields big families of examples of theories with EDEERP or sep.\ fin.\ EDEERdeg which by Proposition \ref{proposition: specialization of EDERP to theories} automatically yield families of $0$-dimensional ambits with EDRP or sep.\ fin.\ EDRP. Most of these examples come from structural Ramsey theory. Namely, if a Fra\"{i}ss\'{e} class has the embedding Ramsey property [or finite Ramsey degree] and the Fra\"{i}ss\'{e} limit is $\aleph_0$-saturated, then the theory of the Fra\"{i}ss\'{e} limit has EDEERP [or sep.\ fin.\ EDEERdeg], but there are also many examples where the Fra\"{i}ss\'{e} class has only finite Ramsey degree but not the embedding Ramsey property, whereas the theory of the Fra\"{i}ss\'{e} limit has EDEERP. 

Example 5.7 in \cite{KLM} shows that DRP does not imply EDRP in the context of theories, let alone in general for $0$-dimensional ambits.
However, in the context of theories,  Itay Kaplan asked whether DEERP implies EDEERP under the NIP assumption. We now answer this question positively. 

\begin{proposition}\label{proposition: under NIP DERP iff EDERP for theories}
Assume $T$ has NIP. Then $T$ has DEERP iff $T$ has EDEERP.
\end{proposition}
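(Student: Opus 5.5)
By Proposition \ref{proposition: specialization of EDERP to theories}, it suffices to show that for the ambit $X:=S_{\bar c}(\C)$ with $G:=\Aut(\C)$ and $x_0:=\tp(\bar c/\C)$, DRP implies EDRP; the converse is trivial. I would argue through the dynamical characterizations. By Corollary \ref{cor: amenability_derp}(2), DRP gives $\eta\in E(X)$ with $\eta[Gx_0]\subseteq \Inv(X)$. Since $Gx_0$ is the set of realized types, this means $\eta$ sends every realized type $\tp(\sigma(\bar c)/\C)$ to an $\Aut(\C)$-invariant type. By Corollary \ref{cor: amenability_derp}(3), we need some $\eta'\in E(X)$ with $\im(\eta')\subseteq\Inv(X)$. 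By Remark \ref{remark: EDERP iff s-EDERP for all s}, it is enough to get, for each single $p\in S_{\bar c}(\C)$, some $\eta_p\in E(X)$ with $\eta_p(p)$ invariant.

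Fix $p\in S_{\bar c}(\C)$. Under NIP, I would use that external types are approximable by realized ones in a uniform, ``honest'' way. The key step is to find $\theta\in E(X)$ such that $\theta(p)$ lies in the closure of $\eta[Gx_0]\subseteq\Inv(X)$. Since $\Inv(X)$ is closed, this gives $\theta(p)\in\Inv(X)$.

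To build $\theta$, pass to a bigger monster $\C'\succ\C$ that realizes $p$ by some $\bar d$. Because $\C$ is a monster, I expect $G$ to realize enough of the elements of $E(X)$ relevant to $p$. The naive candidate is $\theta=\eta$ itself: write $p=\lim_i \tp(\sigma_i\bar c/\C)$. The problem is that $\eta$ is only left-continuous, so $\eta(p)$ need not equal $\lim\eta(\tp(\sigma_i\bar c/\C))$.

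This is where NIP enters. I would use the fact that under NIP, the elements of $E(X)$ (or at least some minimal-ideal elements) act in a definable or Borel-definable way. More concretely, I would use honest definitions or the Shelah expansion. Every externally definable set is a trace of an externally definable set in the Shelah expansion $\C^{Sh}$, and $\C^{Sh}$ is still NIP. So $p$ becomes a realized type over a model of the expansion, in the sense that its restrictions to formulas $\varphi(\bar a,\bar y)$ are uniformly approximated by realized types on finite sets via honest definitions. Applying DEERP to the colorings obtained from these honest definitions should give the required homogeneity for external colorings. More precisely, an externally definable coloring $c\colon\binom{\C}{\bar a}\to 2^n$ given by $\varphi_i(\bar x,\bar y)$ and $p$ is, on any finite $B'$, equal to a definable coloring given by $\psi_i(\bar x,\bar e)$ from honest definitions. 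Then I need one copy $B'$ making the external coloring monochromatic.

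The main obstacle is that the honest definition depends on the finite set, while DEERP finds the copy $B'$ only afterwards. So I would choose honest definitions valid on a large finite $B''\supseteq$ many copies of $B$. Then I would apply DEERP inside $B''$, using that DEERP for a finite $B$ relativizes: by compactness there is a finite $B''$ such that every definable coloring of the given shape has a monochromatic copy of $B$ inside $B''$. This compactness step is uniform in the parameters $\bar e$, because the formulas $\psi_i$ are fixed. Getting this order of quantifiers right is the heart of the proof. Once it is in place, the external coloring agrees with the definable one on $B''$ by honesty, hence is monochromatic on the chosen copy, which gives EDEERP.
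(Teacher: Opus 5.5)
Your reduction to the ambit $(\Aut(\C),S_{\bar c}(\C),\tp(\bar c/\C))$ is fine, but neither of your two routes reaches the conclusion.

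The dynamical route stops at its key step: you never construct $\theta$ with $\theta(p)\in\cl(\eta[Gx_0])$.

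The combinatorial route is circular at the compactness step. Suppose no finite $B''$ works uniformly. Then for every finite $B''\subseteq\C$ there is a bad $\bar e_{B''}\in\C$. Compactness gives $\bar e^*$ such that, for every $B'\in{\C\choose B}$, the $\psi(\cdot,\bar e^*)$-coloring is not constant on ${B'\choose \bar a}$. But this is a partial type over all of $\C$, so $\bar e^*$ lives in an elementary extension. The coloring given by $\psi(\bar x,\bar e^*)$ is therefore only \emph{externally} definable, and the contradiction is with EDEERP, not DEERP. In fact your uniform relativization is equivalent to EDEERP for colorings of that shape.

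The honest-definition step also adds nothing. On any finite set $F$ of copies of $\bar a$, the external coloring given by $\varphi_i(\bar x,\bar y)$ and $p$ already agrees with the definable coloring given by $\varphi_i(\bar x,\bar e)$, for any $\bar e\in\C$ realizing the finitely many relevant formulas of $p$. So if your compactness step were valid (it uses no NIP), taking $\psi_i:=\varphi_i$ would prove DEERP $\Rightarrow$ EDEERP for every theory. That contradicts \cite[Example 5.7]{KLM}. NIP never enters where it is needed.

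The missing ingredient is a NIP-dependent statement controlling the orbit closures of \emph{all} global types, not only realized ones. The paper argues as follows.
\begin{enumerate}
\item DEERP yields a fixed point in $S_{\bar c}(\C)$, i.e.\ $T$ is extremely amenable (\cite[Theorem 3.16(ii)]{KLM}).
\item Under NIP, minimal left ideals in $E(S_{\bar c}(\C))$ are then of bounded size. Hence some $\eta\in E(S_{\bar c}(\C))$ has image consisting of types not forking over $\emptyset$ (\cite[Theorem 7.7 and Corollary 7.2]{KNS}).
\item Under NIP, such types are exactly the $\Autf_L(\C)$-invariant ones (\cite[Proposition 2.1]{HrPi}).
\item Extreme amenability forces $\Autf_L(\C)=\Aut(\C)$ (\cite[Proposition 4.2]{HKP}).
\end{enumerate}
So $\im(\eta)\subseteq\Inv(S_{\bar c}(\C))$, and Corollary \ref{cor: amenability_derp}(3) gives EDEERP.

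In your terms, this replaces the search for $\theta$ with $\theta(p)\in\cl(\eta[Gx_0])$ by a single $\eta$ whose whole image avoids every formula forking over $\emptyset$. It then uses Lascar invariance, rather than any approximation of $p$ by realized types.
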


\begin{proof}
Only ($\Rightarrow$) requires a proof.
Since $T$ has DEERP, by \cite[Theorem 3.16(ii)]{KLM}, the ambit $(\Aut(\C),S_{\bar c}(\C),\tp (\bar c/\C))$ has a fixed point (which means that $T$ is extremely amenable in the terminology from \cite{HKP}). Hence, by \cite[Theorem 7.7]{KNS}, the minimal left ideals in $E(S_{\bar c}(\C))$ are of bounded size. By \cite[Corollary 7.2]{KNS}, this implies that there exists $\eta \in E(S_{\bar c}(\C))$ whose image is contained in the set of types which do not fork over $\emptyset$. On the other hand, by \cite[Proposition 2.1]{HrPi}, a global type $p$ does not fork over $\emptyset$ if and only if $p$ is invariant under $\Autf_L(\C)$ (the group of Lascar strong automorphisms). By \cite[Proposition 4.2]{HKP}, extreme amenability of $T$ implies that $\Autf_L(\C)=\Aut(\C)$. Therefore, $\im(\eta) \subseteq \Inv(S_{\bar c}(\C))$. So $T$ has EDEERP by virtue of \cite[Theorem 3.14(ii)]{KLM}.
\end{proof}



The Boolean $\Aut(\C)$-algebra $\mathcal{A}$ associated with the $0$-dimensional ambit $(\Aut(\C),S_{\bar c}(\C),\tp(\bar c/\C))$ as described in Fact \ref{fact: 0-dim ambit is S(A)} consists of all subsets of $\Aut(\C)$ of the form 
$$\{\sigma \in \Aut(\C): \C \models \varphi(\sigma(\bar c), \bar c)\}$$ 
for a formula $\varphi(\bar x, \bar y)$. It is obvious that this algebra $\mathcal{A}$ is invariant also under the right translations by the elements of $\Aut(\C)$. Therefore, by Corollary \ref{corollary: right invariant G-algebras},

\begin{remark}\label{remark: DEERP iff WDEERP}
$(\Aut(\C),S_{\bar c}(\C),\tp(\bar c/\C))$ has WDRP if and only if it has DRP.
\end{remark}

Finally, we explain that Theorem 5.1 in \cite{MeSu} follows from our Corollary \ref{cor: amenability_derp}(3). Moreover, via this approach the assumption that the age of the ultrahomogeneous structure in \cite[Theorem 5.1]{MeSu} is countable becomes redundant. 

The context of \cite[Theorem 5.1]{MeSu} is the following. Let $M$ be an ultrahomogeneous structure. By ultrahomogeneity, for a finite tuple $\bar a$ in $M$, the set ${M\choose\bar a}$  of copies of $\bar a$ under the embeddings is the same as the copies of $\bar a$ under the automorphisms of $M$. The externally definable colorings of ${M\choose\bar a}$ are defined in the same way as in \cite{KLM}; see Definition \ref{definition: of definable colorings}(2) and Remark \ref{remark: ext def coloring}(1) (except the last sentence involving $\bar c$). 
By \cite[Definition 3.1 and 3.3]{MeSu}, $M$ is said to have {\em externally definable Ramsey property} if for every finite $\bar a\subseteq B\subseteq M$, and every externally definable coloring $c$ of ${M\choose\bar a}$ there is $B'\in{M\choose B}$ such that $|c[{B'\choose\bar a}]|=1$.
By \cite[Definition 4.2]{MeSu}, $M$ is said to have {\em fixed points on type spaces property (FPT)} if for every $n\geq 1$, every $\Aut(M)$-subflow of $S_n(M)$ has a fixed point. 

\begin{fact}\phantomsection\label{fact: Meur and Sullivan}\cite[Theorem 5.1]{MeSu}
\begin{enumerate}
\item If M has FPT, 
then $M$ has the externally definable Ramsey property.
\item If $\Age(M)$ is countable, then $M$ has the externally definable Ramsey property iff $M$ has FPT.
\end{enumerate}
\end{fact}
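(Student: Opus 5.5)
The plan is to prove both items at once, and in fact the equivalence in (2) without the countability assumption on $\Age(M)$. The idea is to view the externally definable Ramsey property of $M$ as a family of $\{p\}$-DRP statements for the flows $(\Aut(M),S_n(M))$ and then apply Proposition \ref{prop: characteriazation_S-derp}. Throughout, put $G:=\Aut(M)$, acting on each $S_n(M)$ by $\sigma p:=\{\psi(\bar y,\sigma(\bar b)):\psi(\bar y,\bar b)\in p\}$. This is an action by homeomorphisms on a $0$-dimensional compact space. Its clopens are the sets $[\psi(\bar b,\bar y)]$, and $\sigma[\psi(\bar b,\bar y)]=[\psi(\sigma(\bar b),\bar y)]$.

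\emph{Step 1: work with flows rather than ambits.} I will first check that the proof of Proposition \ref{prop: characteriazation_S-derp} never uses the distinguished point or the density of its orbit. Remark \ref{remark: charact. of Inv(X)} also uses neither. So for any $0$-dimensional flow $(G,Y)$ and any $S\subseteq Y$, define $S$-definable colorings as in Definition \ref{definition: definable colorings}. Then $S$-DRP is equivalent to the existence of $\eta\in E(Y)$ with $\eta[S]\subseteq\Inv(Y)$. For $S=\{p\}$ this says exactly that $\cl(Gp)$ contains a fixed point. Indeed, $\eta(p)\in\cl(Gp)$; conversely, a fixed point $q\in\cl(Gp)$ is $\eta(p)$ for some $\eta\in E(Y)$.

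\emph{Step 2: translate colorings.} By the analogue of Remark \ref{remark: ext def coloring}(1), finitely many types $p_i\in S_{n_i}(M)$ can be merged into one $p\in S_n(M)$ with $n=\sum n_i$. To do this, take realizations in an elementary extension and concatenate them; the formulas $\varphi_i$ are reindexed accordingly. So every externally definable coloring $c$ of ${M\choose\bar a}$ is given by formulas $\varphi_i(\bar x,\bar y)$ and a single $p\in S_n(M)$.

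By ultrahomogeneity every $\bar a'\in{M\choose\bar a}$ equals $\sigma(\bar a)$ for some $\sigma\in G$. Define $c'(\sigma)(i)=1$ iff $\varphi_i(\sigma(\bar a),\bar y)\in p$, i.e.\ iff $p\in\sigma U_i$ with $U_i:=[\varphi_i(\bar a,\bar y)]$. This $c'$ is well defined and is a $\{p\}$-definable coloring for the flow $S_n(M)$. Conversely, any $\{p\}$-definable coloring given by clopens $[\psi_i(\bar b_i,\bar y)]$ comes, after concatenating the $\bar b_i$ into one tuple $\bar a$, from an externally definable coloring of ${M\choose\bar a}$. As in the proof of Proposition \ref{proposition: specialization of EDERP to theories}, finite $B\supseteq\bar a$ correspond to finite $A\subseteq G$ with ${B\choose\bar a}=A\bar a$, and conversely. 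Hence $M$ has the externally definable Ramsey property iff, for every $n$ and every $p\in S_n(M)$, the flow $(G,S_n(M))$ has $\{p\}$-DRP.

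\emph{Step 3: conclude.} By Step 1, the last condition says that every orbit closure $\cl(Gp)$ in every $S_n(M)$ has a fixed point. Every subflow of $S_n(M)$ contains such an orbit closure, so this is exactly FPT. This gives (1) and (2), with no countability assumption. The argument could equally be phrased through Remark \ref{remark: EDERP iff s-EDERP for all s} and Corollary \ref{cor: amenability_derp}(3).

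The main obstacle I expect is the bookkeeping in Step 2. Colorings of copies of $\bar a$ must correspond exactly to colorings of $G$. This uses ultrahomogeneity (copies under embeddings coincide with copies under automorphisms) and the fact that $c'$ depends only on $\sigma(\bar a)$. One must also check that merging finitely many types and parameter tuples loses nothing. The remaining point is routine: verifying that Proposition \ref{prop: characteriazation_S-derp} holds verbatim for flows without a distinguished point.
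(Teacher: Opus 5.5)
Your proposal is correct and follows essentially the paper's route. Like you, the paper drops the countability assumption (Theorem \ref{theorem: improved Meir Sullivan}), translates colorings as in Proposition \ref{proposition: specialization of EDERP to theories}(1), and applies the dynamical characterization from Section~\ref{section: main}; the only difference is that you apply Proposition \ref{prop: characteriazation_S-derp} (for flows, with $S=\{p\}$) directly on $S_n(M)$, whereas the paper passes to the ambit $(\Aut(M),X,p_0)$ with $X$ a minimal subflow, resp.\ $X=\cl(\Aut(M)p_0)$, and uses Corollary \ref{cor: amenability_derp}(3).
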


We generalize it to:

\begin{theorem}\label{theorem: improved Meir Sullivan}
$M$ has the externally definable Ramsey property iff $M$ has FPT.
\end{theorem}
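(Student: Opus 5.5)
We will deduce the theorem from Corollary \ref{cor: amenability_derp}(3). The relevant ambit is $(\Aut(M), X, x_0)$, where $X$ is a suitable $0$-dimensional ambit built from types over $M$. A natural choice is $X:=S_{\bar m}(M)$, the space of complete types over $M$ in variables $\bar x$ indexed by an enumeration $\bar m$ of $M$ which extend $\tp(\bar m/\emptyset)$. We equip it with the natural action of $G:=\Aut(M)$ and take $x_0:=\tp(\bar m/M)$. This is a $0$-dimensional ambit: $Gx_0$ is dense because every formula $\varphi(\bar x,\bar b)$ in such a type is realized in $M$ by some $\sigma(\bar m)$, using ultrahomogeneity and the fact that $\varphi(\bar m,\bar b)$ only involves finitely many coordinates. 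Everything is at the level of $M$ rather than a monster model, so no countability of $\Age(M)$ will be needed.

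The first step is to translate colorings, mimicking Remarks \ref{remark: c to c'} and \ref{remark: c' to c} and Proposition \ref{proposition: specialization of EDERP to theories}(1). An externally definable coloring $c$ of ${M\choose\bar a}$ is given by formulas $\varphi_i(\bar x,\bar y)$ and types $p_i\in S_{\bar y}(M)$. By Remark \ref{remark: ext def coloring}(1), adapted to $M$, one may merge the $p_i$ into a single type, and realize everything in variables indexed by $\bar m$ after extending to a type in $S_{\bar m}(M)$. With $c$ we associate $c'(\sigma):=c(\sigma(\bar a))$, which is an externally definable coloring of $G$ in the sense of Definition \ref{definition: definable colorings}. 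Conversely, each clopen of $X$ has the form $[\varphi(\bar a,\bar y)]$ for a finite $\bar a\subseteq M$, so every externally definable coloring of $G$ arises this way. Finite $A\subseteq G$ corresponds to finite $B\supseteq \bar a$, using ultrahomogeneity so that copies under embeddings equal copies under automorphisms. With these correspondences, the externally definable Ramsey property of $M$ becomes exactly EDRP of $(G,X,x_0)$.

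The second step is to show that FPT for $M$ is equivalent to the dynamical conditions of Corollary \ref{cor: amenability_derp}(3), i.e.\ that every minimal subflow of $X$ is trivial. Here the main obstacle is that FPT concerns the spaces $S_n(M)$ for finite $n$, while $X$ has infinitely many variables. To get from FPT to EDRP, suppose every subflow of every $S_n(M)$ has a fixed point. A subflow $Y$ of $X$ projects onto subflows of $S_{\bar x'}(M)$ for finite subtuples $\bar x'$, and these are identified with subflows of $S_n(M)$. The fixed sets $\Inv$ of these projections form an inverse system of nonempty compact sets, so by compactness $Y$ has a fixed point. In particular every minimal subflow of $X$ is a point, giving EDRP by Corollary \ref{cor: amenability_derp}(3).

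For the converse, assume EDRP. Corollary \ref{cor: amenability_derp}(3) gives $\eta\in E(X)$ with $\im(\eta)\subseteq\Inv(X)$. Every $S_n(M)$ is a factor of $X$, at least its part of types of tuples consistent with $\tp(\bar m)$. To cover arbitrary $n$-types, which may not be realizable inside the type of $\bar m$, one can instead enlarge the variables: let $X$ use variables indexed by $\bar m$ concatenated with $n$ extra variables, or verify EDRP for these ambits via the same coloring correspondence. Then $\eta$ pushes forward to an element of the Ellis semigroup of $S_n(M)$ with image in the fixed points, and, as in the proof of Remark \ref{remark: fixed points in powers}, any subflow $Y$ satisfies $\eta[Y]\subseteq Y\cap \Inv$. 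I expect making this factor and variables bookkeeping precise, so that every $S_n(M)$, not just the types extending $\tp(\bar m)$, is handled, to be the delicate point.
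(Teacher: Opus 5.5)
\textbf{There is a genuine gap.} It sits in Step 1: the claim that the externally definable Ramsey property of $M$ is exactly EDRP of $(G,S_{\bar m}(M),\tp(\bar m/M))$. For FPT $\Rightarrow$ Ramsey you need the direction ``EDRP of $S_{\bar m}(M)$ $\Rightarrow$ externally definable Ramsey property'', and that direction is not justified.

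Remark \ref{remark: ext def coloring}(1) does not adapt to $M$: its proof uses saturation and strong homogeneity of $\C$. To view a type $p\in S_{\bar y}(M)$ (finite $\bar y$) that defines a coloring as the restriction of a point of $S_{\bar m}(M)$, you need $p|_{\emptyset}$ to be realized in $M$ by a subtuple of $\bar m$. An arbitrary ultrahomogeneous $M$ (e.g.\ in an infinite language) need not be $\omega$-saturated, and the theorem is meant to hold with no such hypothesis. So colorings defined by types whose $\emptyset$-part is omitted in $M$ are not covered, and FPT $\Rightarrow$ Ramsey does not follow from your argument.

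Your last paragraph concedes exactly this obstruction (``arbitrary $n$-types \dots\ may not be realizable inside the type of $\bar m$''). That contradicts what you assert in Step 1, and it leaves the converse direction unfinished too. The same saturation issue also breaks your density claim for $Gx_0$. Take $M$ in the language of unary predicates $\{P_i\}_{i<\omega}$, with one point $m_0$ in all $P_i$ and infinitely many points of colour $\{0,\dots,k\}$ for each $k$. This $M$ is ultrahomogeneous and every automorphism fixes $m_0$, yet $[x_{m_0}\neq m_0]$ is a nonempty clopen of $S_{\bar m}(M)$ missing $Gx_0$. That flaw is harmless, since Corollary \ref{cor: amenability_derp}(3) can be applied to orbit closures.

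\textbf{The missing idea is to drop the global space and work locally in finite-variable type spaces, as the paper does.}

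For FPT $\Rightarrow$ Ramsey: take the coloring defined by $\varphi_i(\bar x,\bar y)$ and a single merged type $p_0\in S_{\bar y}(M)$, and use the ambit $X:=\cl(\Aut(M)p_0)\subseteq S_{\bar y}(M)$. FPT makes every minimal subflow of $X$ a point, so $X$ has EDRP. The coloring $c'(\sigma):=c(\sigma(\bar a))$ is even strongly definable with respect to $(X,p_0)$, via the clopens $[\varphi_i(\bar a,\bar y)]$. Applying EDRP to the finite set of automorphisms producing ${B\choose \bar a}$ yields a monochromatic copy $B'$.

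For Ramsey $\Rightarrow$ FPT: take a minimal subflow $X\subseteq S_n(M)$ with any base point. Its externally definable colorings use clopens $[\varphi(\bar a,\bar y)]\cap X$ and points of $X\subseteq S_n(M)$, so they are externally definable colorings of copies of $\bar a$. The Ramsey property therefore gives EDRP, and Corollary \ref{cor: amenability_derp}(3) makes $X$ a point.

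This is your second suggested fix (``verify EDRP for these ambits via the same coloring correspondence''). Once you adopt it, $S_{\bar m}(M)$ plays no role.
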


\begin{proof}
($\Rightarrow$) Let $n \geq 1$. We want to show that every $\Aut(M)$-subflow $X$ of $S_n(M)$ has a fixed point. This is equivalent to showing that every minimal subflow $X$ is trivial. So consider $X$ minimal; it is an ambit with any point $p_0$ as the distinguished point with dense orbit. By the same argument as in Proposition \ref{proposition: specialization of EDERP to theories}(1), the externally definable Ramsey property of $M$ implies that $(\Aut(M),X,p_0)$ has EDRP. Thus, $X$ is trivial by Corollary \ref{cor: amenability_derp}(3).

($\Leftarrow$) 
Consider an externally definable coloring $c \colon {M\choose\bar a} \to 2^n$ of copies of some finite $\bar a$ defined via formulas $\varphi_0(\bar x, \bar y),\dots,\varphi_{n-1}(\bar x,\bar y)$ and types $p_0(\bar y)=\dots=p_{n-1}(\bar y) \in S_{\bar y}(M)$ (for a finite tuple $\bar y$). Let $X := \cl(\Aut(M) p_0)$, a subflow of $S_{\bar y}(M)$. By assumption, every minimal subflow of $X$ is trivial, so the $\Aut(M)$-ambit $(\Aut(M),X,p_0)$ has EDRP by  Corollary \ref{cor: amenability_derp}(3). Let $c' \colon \Aut(M) \to 2^n$ be the coloring given by $c'(\sigma)(i) := c(\sigma(\bar a))(i)$. It is clearly an externally definable (even strongly definable) coloring with respect to $(\Aut(M),X,p_0)$. Using EDRP as in the proof of Proposition \ref{proposition: specialization of EDERP to theories}(1), for every finite $B \subseteq M$ containing $\bar a$ we get a copy $B'$ of $B$ with $|c[{B'\choose\bar a}]|=1$.
\end{proof}

\subsection{Definable groups}\label{subsection: definable groups}

Let $G$ be a group definable in a first order structure $M$. There are two natural and very important $G$-flows considered in model theory: $(G,S_G(M))$ (that is, the flow of complete types over $M$ concentrated on $G$) and $(G,S_{G,ext}(M))$ (that is, the flow of complete external  types over $M$ concentrated on $G$, i.e.\ the space of ultrafilters on the Boolean algebra of externally definable subsets of $G$). In stable theories, they coincide, but in general not necessarily. Most of topological dynamics in model theory was done so far for the latter flow, because there is a left-continuous semigroup operation on $S_{G,ext}(M)$ extending the action of $G$ which makes it a semigroup isomorphic to $E(S_{G,ext}(M))$. In terms of Stone spaces, $S_G(M)$ can be identified with $S(\Def(G))$ (where $\Def(G)$ is the Boolean $G$-algebra of all definable subsets of $G$), whereas $S_{G,ext}(M):=S(\Extdef(G))$ (where $\Extdef(G)$ is the Boolean $G$-algebra of all externally definable subsets of $G$). There is an intermediate flow $(G,S(\Def(G)^d))$ which is isomorphic to $(G,E(S_G(M)))$ (see Fact \ref{fact: A^d}). In terms of Boolean $G$-algebras, we have $\Def(G) \subseteq \Def(G)^d \subseteq \Extdef(G)$.

Now, all the notions WDRP, DRP, EDRP (and, more generally, $S$-DRP for any $S$) can be applied to each of the above three flows. Similarly with the notions of sep.\ fin.\ DRdeg, sep.\ fin.\ EDRdeg, etc. While in the case of theories we were translating these notions to the corresponding Ramsey properties concerning colorings of copies of finite tuples of elements of the monster model (see Proposition \ref{proposition: specialization of EDERP to theories}), here the notions applied to the above three flows by definition concern colorings of translates of elements of $G$, so we do not need any further identifications. Then, Proposition \ref{prop: characteriazation_S-derp}, Corollary \ref{cor: amenability_derp}, Theorem \ref{theorem: characterization_sep_fin_S-DERdeg}, Corollary \ref{corollary: characterization_sep_fin_EDERdeg}, Proposition \ref{proposition: joint fin EDERdeg}, Lemma \ref{lemma: main lemma for metrizability}, and Corollary \ref{corollary: criterion for metrizability} are new ``KPT-style'' theorems for definable groups. 

Also, Proposition \ref{proposition: relationships} specializes to each of the above three flows. Applied to the simplest flow $(G,S_G(M))$ this yields a Ramsey-theoretic criterion (namely, sep.\ fin.\ EDRdeg for $(G,S_G(M))$) for profiniteness of the definable Bohr compactification of $G$ which by \cite[Proposition 3.4]{GPP} coincides with $\bar{G}/\bar{G}^{00}_M$ (equivalently, this is a criterion for the equality  $\bar{G}^0_M=\bar{G}^{00}_M$). Applied to  $(G,S_{G,ext}(M))$, it gives a criterion (i.e.\ sep.\ fin.\ EDRdeg for $(G,S_{G,ext}(M))$) for profiniteness of the externally definable Bohr compactification of $G$ whose definition is given right before \cite[Remark 1.16]{KrPi} and whose description in terms of model-theoretic connected components appears in \cite[Section 7]{KrPi}. (Here, $\bar{G}$ denotes the interpretation of $G$ in a monster model; $\bar{G}^{00}_M$ is the smallest $M$-type-definable subgroup of $\bar{G}$ of bounded index; $\bar{G}^{0}_M$ is the intersection of all $M$-definable subgroups of $\bar{G}$ of finite index; then $\bar{G}^{00}_M \leq \bar{G}^0_M$ are normal subgroups of $\bar{G}$, and $\bar{G}/\bar{G}^{00}_M$ and $\bar{G}/\bar{G}^{0}_M$ are compact groups (the latter one is even profinite) with the logic topology in which a subset is closed if its preimage in $\bar{G}$ under the quotient map is type-definable).

A fundamental question is how to find examples of definable groups with the various above properties. In the case of theories, one source of examples were stable theories, and another a variety of examples coming from structural Ramsey theory. For definable groups we still have the first class of examples, that is for every group $G$ definable in a stable structure $M$ the flow $(G,S_G(M))$ has sep.\ fin.\ EDRdeg, but we do not have a counterpart of structural Ramsey theory source. There are, however, ad hoc unstable examples such as $S_G(M)$ for $G:=(\mathbb{R},+)$ treated as a group definable in $M:=(\mathbb{R},+,\cdot)$, where EDRP holds. Let us discuss it a bit more carefully.

A correspondence between the dynamical property WAP and model-theoretic stability was first noted in \cite{Yaa} and \cite{YaTs}. Some particular forms of this phenomenon are included in the next fact. Recall that a formula $\varphi(x,y)$ is {\em stable} (in a theory $T$), if there is no $(a_i,b_i)_{i \in \omega}$ in a monster model of $T$ such that $\varphi(a_i,b_i) \iff i \leq j$; the formula $\varphi(x,y)$ is {\em stable in $M$} if is no such $(a_i,b_i)_{i \in \omega}$ in $M$.

\begin{fact}\phantomsection\label{fact: stability = WAP}
\begin{enumerate}
\item If $\Th(M)$ is stable, then $(G,S_G(M))$ is a WAP flow.
\item If $\varphi(y \cdot x)$ is stable in $M$ for every formula $\varphi(x)$ over $M$ implying $G(x)$, then $(G,S_{G}(M))$ is a WAP flow.
\item For any formula $\varphi(x)$ over $M$ implying $G(x)$: if $\varphi(y \cdot x)$ is stable in $M$, then $(G,S_{G,\varphi(y \cdot x)}(M))$ is a WAP flow (where $S_{G,\varphi(y \cdot x)}(M)$ denotes the space of all  complete $\varphi(y \cdot x)$-types over $M$ concentrated on $G$) .
\end{enumerate}
\end{fact}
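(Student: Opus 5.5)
The plan is to reduce all three items to the classical Grothendieck/Ellis--Nerurkar characterization of WAP functions via the double limit criterion. For a $0$-dimensional flow it suffices, by Stone--Weierstrass and the fact that WAP functions form a closed $G$-invariant subalgebra of $C(X)$, to check that the characteristic function $\chi_V$ of every clopen $V$ in a generating family of clopens is WAP. WAP of $\chi_V$ means that its $G$-orbit is relatively weakly compact in $C(X)$. By Grothendieck's criterion, this holds iff there are no sequences $g_i\in G$ and $p_j\in X$ (one may take $p_j$ from a dense subset) such that $\lim_i\lim_j \chi_V(g_i p_j)$ and $\lim_j\lim_i \chi_V(g_ip_j)$ both exist and differ. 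Passing to subsequences, the failure of this double limit condition for a $\{0,1\}$-valued function is equivalent to the existence of an infinite ``half-graph'' pattern: after Ramsey, $\chi_V(g_ip_j)=1 \iff i\le j$ (or its reverse, handled symmetrically by swapping roles or complementing $V$).

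I will prove (3) first, and then derive (2) and (1) from it. In $S_{G,\varphi(y\cdot x)}(M)$, the basic clopens are of the form $[\varphi(g\cdot x)]$ for $g\in G(M)$ and their Boolean combinations, and the generating family is the $G$-translates of $[\varphi(x)]$. So it is enough to show that $\chi_{[\varphi(x)]}$ is WAP. The points $p_j$ can be taken realized, $p_j=\tp_{\varphi(y\cdot x)}(a_j/M)$ with $a_j\in G(M)$, since realized types are dense. The translated point $g_ip_j$ lies in $[\varphi(x)]$ iff $\models\varphi(g_i a_j)$. A half-graph $\varphi(g_ia_j)\iff i\le j$ with $g_i,a_j\in M$ then directly contradicts stability of $\varphi(y\cdot x)$ in $M$, reading $y:=g_i$ and $x:=a_j$. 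The main care needed here is the reduction from a double-limit failure, where the points may be arbitrary types, to a half-graph among realized elements. I would handle it by first taking $p_j$ in the dense set of realized types and using that double-limit failure can be witnessed on a dense set. This is legitimate because for continuous functions relative weak compactness of the orbit is tested on any dense subset, via Grothendieck's theorem for $C(X)$ with $X$ compact.

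For (2), note that $S_G(M)$ is the inverse limit, equivalently a subflow of the product, of the flows $S_{G,\varphi(y\cdot x)}(M)$ as $\varphi$ ranges over formulas over $M$ implying $G(x)$. Every clopen of $S_G(M)$ is $[\varphi(x)]$, whose characteristic function is pulled back from $S_{G,\varphi(y\cdot x)}(M)$, where it is WAP by (3). Pullbacks of WAP functions along flow homomorphisms are WAP, so all of $C(S_G(M))$ is WAP and the flow is WAP.

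Finally, (1) follows from (2): if $\Th(M)$ is stable, then every formula $\varphi(y\cdot x)$ is stable in the theory, hence a fortiori has no half-graph in $M$. The step I expect to be the main obstacle is the precise appeal to Grothendieck's double-limit criterion and the reduction to realized points, not any model-theoretic step.
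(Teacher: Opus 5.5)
Your proposal is correct and follows essentially the paper's route: Grothendieck's double-limit criterion turns stability of $\varphi(y\cdot x)$ in $M$ into WAP of $\chi_{[\varphi(x)]}$. Then closedness of the WAP functions in $C(X)$, together with Stone--Weierstrass (the translates of these characteristic functions separate points), gives WAP of the flow. The one cosmetic difference is that you obtain (2) from (3) by pulling back along the factor maps $S_G(M)\to S_{G,\varphi(y\cdot x)}(M)$, whereas the paper runs the same argument directly on $S_G(M)$.

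There is one caveat, which the paper's proof shares. In (3) you dispose of the reversed half-graph by complementing $V$ or by swapping the roles of $y$ and $x$. Both moves require reading ``stable in $M$'' symmetrically, i.e.\ no infinite half-graph in either orientation, equivalently the double-limit condition. The hypothesis of (3) concerns a single formula, and the one-sided definition recalled in the paper is not preserved under negation or under swapping variables inside a single structure.
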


\begin{proof}
(1) clearly follows from (2). Items (2) and (3) can be easily deduced from Grothendieck's double limit theorem as in \cite[Theorem 3.16]{HKP0}. Briefly, let $X=S_{G}(M)$ in (2) [and $X=S_{G,\varphi(y \cdot x)}(M)$ in (3)]. As in the proof of  \cite[Theorem 3.16]{HKP0}, by Grothendieck's theorem, we have that for any formula $\varphi(x)$ over $M$  implying $G(x)$, the characteristic function $\chi_{[\varphi(x)]}$ is WAP with respect to the flow $(G,X)$ if and only if the formula $\varphi(y \cdot x)$ is stable in $M$. So, by assumption, all functions $\chi_{[\varphi(x)]}$ are WAP in (2) [the function $\chi_{[\varphi(x)]}$ is WAP in (3)]. Bearing in mind that the flow $(G,X)$ is WAP iff all $f \in C(X)$ are WAP, the last conclusion, the fact that the WAP functions form a closed subalgebra of $C(X)$ in the supremum norm topology, the fact that all functions $\chi_{[\varphi(x)]}$ [the $G$-translates of the function $\chi_{[\varphi(x)]}$ in (3)] separate points in $X$, and Stone-Weierstrass theorem imply that $(G,X)$ is WAP.
\end{proof}

\begin{corollary}
In each item of Fact \ref{fact: stability = WAP}, the resulting $G$-ambit (that is: $(G,S_G(M),\tp(e/M))$ in (1) and (2), and $(G,S_{G,\varphi(y \cdot x)}(M),\tp_{\varphi(y \cdot x)}(e/M))$ in (3)) has sep.\ fin.\ EDRdeg.
\end{corollary}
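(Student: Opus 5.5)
The plan is to combine Fact \ref{fact: stability = WAP} with Proposition \ref{proposition: WAP implies finite RP}. That proposition says that every WAP $0$-dimensional $G$-ambit has sep.\ fin.\ EDRdeg. So it suffices to check two things in each case: that the resulting triple is a $0$-dimensional $G$-ambit, and that the underlying flow is WAP. The second is exactly what Fact \ref{fact: stability = WAP} provides in items (1), (2) and (3).

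For the ambit structure, the space $S_G(M)$ is a Stone space, namely $S(\Def(G))$, hence compact and $0$-dimensional. Here $G$ is treated as a discrete group acting by left translation. The orbit of $\tp(e/M)$ is $\{\tp(g/M): g\in G\}$, i.e.\ the set of realized types, and it is dense: every basic clopen $[\varphi(x)]$ with $\varphi(x)\vdash G(x)$ consistent has a realization $g\in G(M)$, so $\tp(g/M)\in[\varphi(x)]$. Thus $(G,S_G(M),\tp(e/M))$ is a $0$-dimensional ambit.

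For item (3), the space $S_{G,\varphi(y\cdot x)}(M)$ is the Stone space of the Boolean algebra generated by the sets $\varphi(b\cdot x)$ for $b\in G$. This algebra is closed under left translations, since $g\varphi(b\cdot x)=\varphi(bg^{-1}\cdot x)$, so $G$ acts on the space by homeomorphisms. Density of the orbit of $\tp_{\varphi(y\cdot x)}(e/M)$ follows as before: any consistent Boolean combination of instances $\varphi(b\cdot x)$, together with $G(x)$, is realized in $M$ by some $g$, and then $\tp_{\varphi(y\cdot x)}(g/M)=g\tp_{\varphi(y\cdot x)}(e/M)$.

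Finally, WAP is a property of the flow and does not depend on the distinguished point. Hence Proposition \ref{proposition: WAP implies finite RP} applies directly and gives sep.\ fin.\ EDRdeg in all three cases. The only point needing care is the identification of the action on $\varphi(y\cdot x)$-types and the verification that its orbit is dense in (3); I expect this to be routine bookkeeping rather than a real obstacle.
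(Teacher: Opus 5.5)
Your proposal is correct and follows the paper's own proof, which deduces the corollary directly from Fact~\ref{fact: stability = WAP} and Proposition~\ref{proposition: WAP implies finite RP}. Your extra checks are correct: that each triple is a $0$-dimensional ambit, including the translation identity for $\varphi(y\cdot x)$-types and density of the orbit, and that WAP does not depend on the distinguished point. The paper simply leaves these checks implicit.
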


\begin{proof}
It follows directly from Fact \ref{fact: stability = WAP} and Proposition \ref{proposition: WAP implies finite RP}.
\end{proof}

Let $\C \succ M$ be a monster model and $\bar{G} :=G(\bar \C)$. Recall that $G$ is called {\em definably amenable} if  there exists a finitely additive $G$-invariant probability measure on the Boolean algebra $\Def(G)$. 
The proof of the next lemma uses the idea of the proof of Proposition 7.11 from \cite{KNS}. For the notions of $f$-generic types and formulas and $G$-dividing formulas see \cite[Definition 3.2]{ChSi}.

\begin{lemma}\label{lemma: eta with f-generic image}
Assume that $\Th(M)$ has NIP and $G$ is definably amenable. Then there exists $\eta \in S_{G}(\C)$ whose image consists of f-generic types. 
\end{lemma}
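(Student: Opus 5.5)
The plan is to avoid building $\eta$ by hand and instead take $\eta$ in a minimal left ideal of the Ellis semigroup. I read the statement as asserting the existence of $\eta$ in the Ellis semigroup $E(S_G(\C))$ of the flow $(\bar G, S_G(\C))$, acting on global types concentrated on $\bar G$. Then I will show that every point in the image of such an $\eta$ is an almost periodic type. The conclusion will follow from the model-theoretic implications ``almost periodic $\Rightarrow$ weakly generic $\Rightarrow$ f-generic''. The second of these is where NIP and definable amenability are used.

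\emph{Step 1 (dynamics).} Fix a minimal left ideal $\mathcal{M}$ of $E(S_G(\C))$ and any $\eta\in\mathcal{M}$. For an arbitrary $p\in S_G(\C)$, the map $\mathcal{M}\to S_G(\C)$, $\alpha\mapsto\alpha(p)$, is a $\bar G$-flow homomorphism. Its image $\mathcal{M}p$ is therefore a minimal subflow of $\cl(\bar G p)$, and $\eta(p)\in\mathcal{M}p$. Hence $\eta(p)$ is an almost periodic point of $(\bar G,S_G(\C))$. This is the same argument as in (4) $\Rightarrow$ (1) of Remark \ref{remark: equivalent dynamical characterizations of EDERP}. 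So $\im(\eta)$ is contained in the union of the minimal subflows of $S_G(\C)$.

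\emph{Step 2 (almost periodic implies weakly generic).} I will use Newelski's observation that every almost periodic global type is weakly generic; this holds with no assumptions on the theory. Briefly, suppose $\varphi\in q$ with $q$ almost periodic. By minimality of $\cl(\bar G q)$, finitely many translates of $[\varphi]$ cover $\cl(\bar G q)$. Consequently $\varphi\vee\psi$ is generic, where $\psi$ is a suitable formula defining the complement of a neighbourhood of $\cl(\bar G q)$, and $\psi$ itself is not generic. This gives weak genericity of $\varphi$.

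\emph{Step 3 (weakly generic implies f-generic).} This is where $\Th(M)$ having NIP and $G$ being definably amenable enter, and I expect it to be the main obstacle. The goal is to show that a weakly generic formula does not $G$-divide. I would cite Chernikov--Simon \cite{ChSi}, who show that in NIP definably amenable groups weakly generic and f-generic types (and formulas) coincide.

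If a self-contained route is wanted, I would argue as in \cite[Proposition 7.11]{KNS}. First, the non-f-generic formulas form a $\bar G$-invariant ideal: by NIP and definable amenability, $\varphi$ is f-generic iff $\mu(\varphi)>0$ for some $\bar G$-invariant Keisler measure $\mu$. Second, a generic formula has positive measure for every invariant $\mu$. Now take $\varphi\vee\psi$ generic with $\psi$ not generic. Using the ideal property and averaging over the finitely many translates witnessing genericity, one derives that $\varphi$ has positive measure for some invariant $\mu$, i.e.\ $\varphi$ is f-generic.

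\emph{Conclusion.} Combining the steps, for every $p\in S_G(\C)$ the type $\eta(p)$ is almost periodic, hence weakly generic, hence f-generic, which is the assertion of the lemma.
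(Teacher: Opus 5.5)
Your proof is correct, granting the Chernikov--Simon theorem that weakly generic and f-generic coincide in NIP definably amenable groups. You also read the statement correctly as $\eta\in E(S_G(\C))$ for the $\bar G$-flow. It is, however, a different route from the paper's.

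\emph{The paper's route.} It never looks at minimal ideals. It shows by compactness that the closed sets $X_{p,\varphi}=\{\eta\in E(S_G(\C)):\neg\varphi\in\eta(p)\}$, for $p\in S_G(\C)$ and $\varphi$ not f-generic, have the finite intersection property. Given $\varphi_0,\dots,\varphi_{n-1}$ and $p_0,\dots,p_{n-1}$, the disjunction $\varphi=\bigvee_j\varphi_j$ is still not f-generic, by the ideal property. Hence $\varphi$ $G$-divides, witnessed by $(g_i)$ with $(g_i\varphi)_i$ $k$-inconsistent. Pigeonhole then gives a single $i\le (k-1)n$ with $g_i\varphi\notin p_j$ for all $j<n$, so $g_i^{-1}$ lies in every $X_{p_j,\varphi_j}$.

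\emph{Comparison.} Your chain (minimal left ideal, almost periodic, weakly generic, f-generic) is more conceptual. It exhibits explicit good $\eta$'s, namely all elements of all minimal left ideals. The cost is that it imports the comparatively deep ``weakly generic $=$ f-generic'' theorem. The paper needs only the $G$-dividing characterization of f-genericity and the ideal property.

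\emph{A caveat on your optional self-contained Step 3.} The measure-averaging sketch does not work as stated. Genericity of $\varphi\vee\psi$ only yields $\mu(\varphi)+\mu(\psi)>0$ for every invariant $\mu$. Non-genericity of $\psi$ could enter only through an invariant $\mu$ with $\mu(\psi)=0$, and you have not produced one; its existence is essentially what is being proved.

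The right elementary argument is the paper's $G$-dividing trick.
\begin{itemize}
\item Suppose $\varphi$ is not f-generic and $\bigcup_{j<m}h_j(\varphi\vee\psi)=\bar G$.
\item Then $\Phi:=\bigvee_j h_j\varphi$ is not f-generic, so it $G$-divides. Hence some translates $g_0\Phi,\dots,g_{k-1}\Phi$ have empty intersection.
\item Put $\Psi:=\bigvee_j h_j\psi$, so that $\bar G=g_i\Phi\cup g_i\Psi$ for each $i$. Every element misses some $g_i\Phi$, so $\bar G=\bigcup_{i<k}g_i\Psi$, and $\psi$ is generic.
\end{itemize}
Thus a non-f-generic formula is never weakly generic, which is exactly what your Step 3 needs.
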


\begin{proof}
For a formula $\varphi(x)$ and $p \in S_{G}(\C)$, set $X_{p,\varphi}:=\{\eta \in E(S_{G}(\C)): \neg \varphi(x) \in \eta(p)\}$. It is enough to show that 
$$\bigcap_{\varphi(x) \textrm{ non-f-generic}} \bigcap_{p \in S_{G}(\C)} X_{p,\varphi} \ne \emptyset.$$
So consider any formulas $\varphi_0(x),\dots, \varphi_{n-1}(x)$ which are not f-generic and any types $p_0,\dots,p_{n-1} \in S_{G}(\C)$. Then $\varphi(x):=\bigvee_{i=0}^{n-1} \varphi_i(x)$ is not f-generic by \cite[Corollary 3.5]{ChSi}. Let $M'\prec \C$ be a small model containing the parameters of $\varphi(x)$. By \cite[Proposition 3.4]{ChSi}, $\varphi(x)$ $G$-divides over $M'$, so there is an $M'$-indiscernible sequence $(g_i)_{i<\omega}$ such that the sequence $(g_i\varphi(x): i<\omega)$ is $k$-inconsistent for some $k<\omega$. By the pigeonhole principle, there is $0 \leq i\leq (k-1)n$ such that for all $j<n$ we have $g_i \varphi(x) \notin p_j$, equivalently $\neg \varphi(x) \in g_i^{-1} p_j$. So for $g :=g_i^{-1}$ we have $g \in \bigcap_{j<n}X_{p_j,\varphi_j}$. Thus, we have proved that the family $\{X_{p,\varphi}: \varphi(x) \textrm{ non-f-generic}, p\in S_{G}(\C)\}$ has the finite intersection property, which is enough by compactness of $E(S_{G}(\C))$ and closedness of the sets $X_{p,\varphi}$.
\end{proof}

\begin{remark}\phantomsection\label{remark: transfer of EDERP}
\begin{enumerate}
\item The property WDRP of $(G,S_G(M),\tp(e/M))$ is preserved under replacing $M$ by both an elementary extension or an elementary substructure (containing the parameters over which $G$ is defined). 
\item The property EDRP of $(G,S_G(M),\tp(e/M))$ is preserved under replacing $M$ by an elementary substructure.
\end{enumerate}
\end{remark}

\begin{proof}
(1) By Corollary \ref{cor: amenability_derp}, WDRP is equivalent to $\Inv(S_G(M)) \ne \emptyset$ (i.e.\ definable extreme amenability of $G$). It is well-known that this property is preserved under changing the model $M$. Going down is just by restriction of a $G$-invariant type to the smaller model. Going up is by the standard construction of extending a measure (here Dirac measure) to a bigger model.

(2) Consider any $M' \prec M$ containing the parameters over which $G$ is defined. The assumption that $(G,S_G(M),\tp(e/M))$ has EDRP is equivalent to the condition $(*)$ saying that for any formulas $\varphi_0(x,\bar a),\dots,\varphi_{n-1}(x,\bar a)$ over $M$ (i.e.\ $\bar a$ is contained in $M$), external elements $g_0,\dots,g_{n-1} \in \bar{G}$, and a finite subset $A \subseteq G=G(M)$, there exists $g \in G$ such that 
$$\bigwedge_{h,h' \in A}\bigwedge_{i<n} \varphi_i(h^{-1}g^{-1} g_i,\bar a) \leftrightarrow \varphi_i(h'^{-1}g^{-1} g_i,\bar a).$$

Our goal is to show that the counterpart of $(*)$ in $M'$ also holds. So consider any formulas $\varphi_0(x,\bar a),\dots,\varphi_{n-1}(x,\bar a)$ over $M'$, external elements $g_0,\dots,g_{n-1} \in \bar{G}$, and finite $A \subseteq G(M')$. Choose $g_0',\dots,g_{n-1}'$ so that $\tp(g_0',\dots, g_{n-1}'/M)$ is an heir extension of $\tp(g_0,\dots, g_{n-1}/M')$. Applying $(*)$ to the above data, we get $g \in G$ such that
$$\bigwedge_{h,h' \in A} \bigwedge_{i<n} \varphi_i(h^{-1}g^{-1} g_i',\bar a) \leftrightarrow \varphi_i(h'^{-1}g^{-1} g_i',\bar a).$$
Thus, by the definition of heir extensions, we can find $g' \in G(M')$ such that 
$$\bigwedge_{h,h' \in A} \bigwedge_{i<n} \varphi_i(h^{-1}g'^{-1} g_i',\bar a) \leftrightarrow \varphi_i(h'^{-1}g'^{-1} g_i',\bar a),$$
which completes the proof.
\end{proof}

The second item of the next corollary is a counterpart of Proposition \ref{proposition: under NIP DERP iff EDERP for theories} for definable groups.
Recall that, by Shelah's theorem (see \cite{She1}), assuming NIP, the component $\bar{G}^{00}_M$ is independent of $M$  so denoted by $\bar{G}^{00}$. Recall also that the {\em Shelah expansion} $M^{\textrm{Sh}}$ of $M$ is the expansion by adding as predicates all externally definable subsets of all finite Cartesian powers of $M$. Another Shelah's theorem (see \cite{She2}) says that if $M$ has NIP, then $M^{\textrm{Sh}}$ has quantifier elimination and NIP. Thus, under NIP, $(G,S_{G,ext}(M),\tp_{ext}(e/M))$ is isomorphic to $(G,S_G(M^{\textrm{Sh}}), \tp(e/M^{\textrm{Sh}}))$.

\begin{corollary}\phantomsection\label{corollary: DEERP iff EDERP under NIP for definable groups}
\begin{enumerate}
\item Assume that $\Th(M)$ has NIP and $G$ is definably amenable. Suppose that $\bar{G}^{00} = \bar{G}$. Then the ambit $(G,S_G(M),\tp(e/M))$ has EDRP. 
\item Assuming NIP, for the ambit $(G,S_G(M),\tp(e/M))$ the properties of having WDRP, DRP and EDRP are all equivalent.
\item Both items hold for $(G,S_{G,ext}(M),\tp_{ext}(e/M))$ in place of $(G,S_G(M),\tp(e/M))$.
\end{enumerate}
\end{corollary}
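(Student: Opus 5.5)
Item (1) will go through the dynamical characterization in Corollary \ref{cor: amenability_derp}(3). We need $\eta\in E(S_G(M))$ with $\im(\eta)\subseteq \Inv(S_G(M))$.

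We work first over the monster. By Lemma \ref{lemma: eta with f-generic image} there is $\eta_0\in E(S_G(\C))$ whose image consists of f-generic types. Under NIP, an f-generic type has a bounded orbit, and its stabilizer is $\bar G^{00}$ (Chernikov--Simon). Since $\bar G^{00}=\bar G$, every f-generic global type is $\bar G$-invariant, so $\im(\eta_0)\subseteq \Inv(S_G(\C))$. Hence the ambit $(\bar G,S_G(\C),\tp(e/\C))$ has EDRP.

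Next we pass to $M$ using Remark \ref{remark: transfer of EDERP}(2). It says EDRP goes down to elementary substructures, phrased with $\C$ playing the role of the larger model and $G=G(\C)$ acting. Taking $M\prec\C$ gives EDRP for $(G,S_G(M),\tp(e/M))$. If one prefers to avoid the monster as the ``structure'', use instead a sufficiently saturated $N\succ M$; the Chernikov--Simon argument applies there, since f-genericity over $N$ and the equality $\bar G^{00}=\bar G$ are independent of the model under NIP.

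For item (2), the chain EDRP $\Rightarrow$ DRP $\Rightarrow$ WDRP is immediate, so the work is WDRP $\Rightarrow$ EDRP. WDRP gives a $G$-invariant type in $S_G(M)$, by Corollary \ref{cor: amenability_derp}(1). Remark \ref{remark: transfer of EDERP}(1) lifts it to a $\bar G$-invariant global type, i.e.\ a Dirac invariant measure. This yields two facts:
\begin{itemize}
\item $G$ is definably amenable;
\item the stabilizer of that type is all of $\bar G$ and is type-definable of bounded index, so it contains $\bar G^{00}$. More directly, an invariant type's $\bar G$-orbit is trivial, and $\bar G/\bar G^{00}$ acts on the orbit of an f-generic type with stabilizer $\bar G^{00}$.
\end{itemize}
The invariant type is f-generic, since its orbit is trivial, and under NIP and definable amenability the stabilizer of an f-generic type is $\bar G^{00}$. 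Hence $\bar G^{00}=\bar G$, and item (1) gives EDRP.

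For item (3), replace $M$ by the Shelah expansion $M^{\mathrm{Sh}}$, which is NIP by Shelah's theorem. The ambit $(G,S_{G,ext}(M),\tp_{ext}(e/M))$ is isomorphic to $(G,S_G(M^{\mathrm{Sh}}),\tp(e/M^{\mathrm{Sh}}))$, and the hypotheses transfer. Definable amenability over $M^{\mathrm{Sh}}$ follows from that over $M$ by Hrushovski--Pillay's extension of measures to externally definable sets under NIP. For $\bar G^{00}=\bar G$ in the expansion, I expect to use the known equality of the connected components $\bar G^{00}$ computed in $M$ and in $M^{\mathrm{Sh}}$ under NIP. The item (2) version for $M^{\mathrm{Sh}}$ is just item (2) applied there, since the isomorphism preserves WDRP, DRP and EDRP.

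The main obstacle is the step ``f-generic $\Rightarrow$ stabilizer is $\bar G^{00}$''. It must be quoted precisely from Chernikov--Simon's results for definably amenable NIP groups. In item (2) one must also verify that an invariant type is f-generic. That holds because no translate of a formula in it can $G$-divide, as all translates lie in the same invariant type.
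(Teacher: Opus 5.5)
Your proposal is correct. Items (1) and (3) follow the paper's proof. For (1): Lemma~\ref{lemma: eta with f-generic image}, the Chernikov--Simon fact that global f-generic types are stabilized by $\bar G^{00}$, then Corollary~\ref{cor: amenability_derp}(3) over $\C$ and Remark~\ref{remark: transfer of EDERP}(2) to descend to $M$. For (3): the isomorphism with the ambit over $M^{\mathrm{Sh}}$, together with invariance of NIP, definable amenability and $\bar G^{00}$ under Shelah expansion. The paper takes all three invariances from \cite{CPS}, so you need not appeal to Hrushovski--Pillay separately.

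Item (2) is where you take a different route to $\bar G^{00}=\bar G$. The paper notes that WDRP means definable extreme amenability and quotes \cite[Proposition 0.6]{KrPi-1}. That fact needs no NIP: the $G$-equivariant map $S_G(M)\to\bar G/\bar G^{00}_M$ sends a $G$-fixed type to a point fixed under left translation by the dense image of $G$, so the compact quotient is trivial.

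You instead lift to a $\bar G$-invariant global type $p$, show it is f-generic via non-$G$-dividing and \cite[Proposition 3.4]{ChSi}, and use $\mathrm{Stab}(p)=\bar G^{00}$ for f-generic $p$. This is valid here, since NIP and definable amenability are available. (Definable amenability comes from the Dirac measure; you rightly make this explicit, whereas the paper uses it tacitly.) But it spends the Chernikov--Simon machinery on an NIP-free fact. Moreover, the inclusion that actually does the work, $\mathrm{Stab}(p)\subseteq\bar G^{00}$, holds for every global type by the same equivariant-map argument, so f-genericity is not needed at all.

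Your first bullet (``type-definable of bounded index, so it contains $\bar G^{00}$'') only yields the trivial inclusion $\bar G^{00}\subseteq\bar G$ and should be dropped.
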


\begin{proof}
(1) By Lemma \ref{lemma: eta with f-generic image}, there is $\eta \in E(S_G(\C))$ whose image consists of f-generic types. By \cite[Proposition 3.8]{ChSi}, each global f-generic type is stabilized by $\bar{G}^{00}$ which, by our assumption, is the whole $\bar{G}$. Therefore, $\eta$ is $\bar{G}$-invariant, which by Corollary \ref{cor: amenability_derp}(3) is equivalent to EDRP for $(\bar{G},S_{G}(\C),\tp(e/\C))$.  Using Remark \ref{remark: transfer of EDERP}(2), we conclude that  $(G,S_{G}(M),\tp(e/M))$ has EDRP.

(2) By  Corollary \ref{cor: amenability_derp}, WDRP implies that $(G,S_G(M))$ has a fixed point, i.e.\ $G$ is definably extremely amenable. Therefore, $\bar{G}^{00}=\bar{G}$ (e.g., see \cite[Proposition 0.6]{KrPi-1}). So the conclusion follows from (1).

(3) follows from the above two items, the observation preceding Corollary \ref{corollary: DEERP iff EDERP under NIP for definable groups}, and the fact that each of the properties/objects: NIP, the component $\bar{G}^{00}$, and definable amenability of $G$ is invariant under passing to the Shelah expansion which we know by \cite{CPS}.
\end{proof}

\begin{corollary}\phantomsection\label{corollary: equivalences of RP for definable groups}
\begin{enumerate}
\item For the ambit $(G,S_{G,ext}(M),\tp_{ext}(e/M))$ all three properties WDRP, DRP, EDRP are equivalent. Similarly for the ambit $(G,S(\Def(G)^d),p_e)$.
\item For the ambit $(G,S_G(M),\tp(e/M))$ the properties WDRP and DRP are equivalent.
\item The ambit $(G,S_G(M),\tp(e/M))$ has EDRP iff  the ambit $(G,S(\Def(G)^d),p_e)$ has EDRP.
\end{enumerate}
\end{corollary}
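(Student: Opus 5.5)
For item (1), I would apply Corollary \ref{corollary: equivalence of all RP for d-closed algebras} directly. The ambit $(G,S_{G,ext}(M),\tp_{ext}(e/M))$ is $(G,S(\Extdef(G)),p_e)$. It is a standard fact, recalled at the start of this subsection, that $S_{G,ext}(M)$ carries a left-continuous semigroup operation extending the $G$-action. Equivalently, $\Extdef(G)$ is d-closed: for $p\in S(\Extdef(G))$ and $A$ externally definable, the set $d_p(A)$ is again externally definable. By Fact \ref{fact: d-closed iff semigroup} this gives the hypothesis of Corollary \ref{corollary: equivalence of all RP for d-closed algebras}, so WDRP, DRP and EDRP coincide. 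For $(G,S(\Def(G)^d),p_e)$ the algebra $\Def(G)^d$ is d-closed by Fact \ref{fact: A^d}, so the same corollary applies.

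For item (2), I would use Corollary \ref{corollary: right invariant G-algebras}. We only need to check that $\Def(G)$ is closed under right translations. For a definable set $D=\varphi(M)\subseteq G$ and $g\in G$, the set $Dg=\{x\in G:\varphi(xg^{-1})\}$ is definable with the additional parameter $g$. This is immediate, so the equivalence of WDRP and DRP follows.

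For item (3), I would use the dynamical characterization in Corollary \ref{cor: amenability_derp}(3): EDRP is equivalent to the existence of a $G$-invariant element of the Ellis semigroup. This is Remark \ref{remark: equivalent dynamical characterizations of EDERP}(2), or equivalently triviality of a minimal left ideal. By Fact \ref{fact: A^d}, $E(S_G(M))\cong S(\Def(G)^d)$ as $G$-flows. Moreover, since $\Def(G)^d$ is d-closed, Fact \ref{fact: d-closed iff semigroup} gives $E(S(\Def(G)^d))\cong S(\Def(G)^d)$ as $G$-flows. Thus both Ellis semigroups are isomorphic to the same $G$-flow $S(\Def(G)^d)$. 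Hence one has a $G$-invariant element if and only if the other does, and the equivalence follows.

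The main point needing care is item (1) for $S_{G,ext}(M)$, namely that $\Extdef(G)$ is d-closed. This is Newelski's classical observation: $d_p(A)$ is the trace on $G$ of a set defined using a realization of a suitable coheir. Once that is granted, the rest is bookkeeping with the cited facts.
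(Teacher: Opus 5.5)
Your proposal is correct and follows the paper's proof: (1) comes from the corollary for d-closed algebras (using d-closedness of $\Extdef(G)$ and of $\Def(G)^d$), (2) from right-invariance of $\Def(G)$, and (3) from the fixed-point characterization of EDRP together with $E(S_G(M))\cong S(\Def(G)^d)$. The only difference is cosmetic. In (3) you apply $E(S(\Def(G)^d))\cong S(\Def(G)^d)$ directly, whereas the paper passes through WDRP of $S(\Def(G)^d)$ and item (1); the proof of that item uses this same isomorphism.
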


\begin{proof}
(1) follows from Corollary \ref{corollary: equivalence of all RP for d-closed algebras} and Fact \ref{fact: A^d}, and (2) from Corollary \ref{corollary: right invariant G-algebras}. Item (3) follows from (1), Fact \ref{fact: A^d}, and Corollary \ref{cor: amenability_derp}.
\end{proof}

In the rest of this subsection, we give an example showing that DRP does not imply EDRP for $(G,S_G(M),\tp(e/M))$, and another example showing that EDRP for $(G,S_G(M),\tp(e/M))$ does not imply EDRP for $(G,S_{G,ext}(M),\tp_{ext}(e/M))$. The former example also shows that the externally definable Bohr compactification may differ from the definable Bohr compactification, confirming a prediction mentioned in the last paragraph of \cite{KrPi}.

The model completion from the first example below was mentioned to the first author by Ehud Hrushovski as a counterexample to a completely different statement concerning simple theories. 

\begin{example}\label{example: DERP but not EDERP fr definable groups}
Let $M=G$ be the Fra\"{i}ss\'{e} limit of the class of all finite groups of exponent $2$ with a predicate $D$ such that $D(0)$. (Equivalently, $M$ is the countable model of the model completion of the theory of groups of exponent $2$ with a predicate $D$ such that $D(0)$.)
\begin{enumerate}
\item $\bar{G}^{00}_M= \bar{G}$, whereas there exists an externally definable subgroup $H$ of $G$ of index $2$; even more, $H \in \Def(G)^d$.
\item $(G,S_G(M),\tp(e/M))$ has DRP but not EDRP.
\item The definable Bohr compactification of $G$ is trivial, whereas the externally definable Bohr compactification of $G$ is non-trivial. 
\end{enumerate}
\end{example}

\begin{proof}
(1) We start from
\begin{clm}
$G$ does not have any proper, infinite definable subgroup. 
\end{clm}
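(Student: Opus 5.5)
We need to prove: $G$ (the Fraïssé limit of finite exponent-2 groups with a predicate $D$ containing $0$) has no proper infinite definable subgroup. The plan is to use quantifier elimination together with a generic translation argument.

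\textbf{Quantifier elimination.} Since $M$ is the Fraïssé limit of a uniformly locally finite class, it is ultrahomogeneous and $\aleph_0$-categorical, so its theory has quantifier elimination. Thus any definable set $X\subseteq G$ is defined by a quantifier-free formula $\varphi(x,\bar a)$. Such a formula is a Boolean combination of two kinds of atomic formula:
\begin{enumerate}
\item linear equations $x=t(\bar a)$, where $t(\bar a)$ lies in the finite subgroup $A:=\langle \bar a\rangle$, viewed as an $\mathbb{F}_2$-vector space;
\item atomic formulas $D(x+t(\bar a))$ with $t(\bar a)\in A$.
\end{enumerate}
Hence, up to a finite set, $X$ is determined by which translates $x+b$, $b\in A$, lie in $D$. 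Concretely, outside the finite set $A$, membership of $x$ in $X$ depends only on the set $\{b\in A: D(x+b)\}$.

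\textbf{Setting up the contradiction.} Let $H\le G$ be an infinite definable subgroup, defined over a finite tuple $\bar a$, and suppose $H\neq G$. Pick $h_0\notin H$.

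By the extension property, for any finite $C\supseteq A\cup\{h_0\}$ and any prescribed pattern of $D$ on a new coset $x+\langle C\rangle$ with $x\notin\langle C\rangle$, there is a realization in $M$. In particular, fix a pattern $P\subseteq A$ such that some $x\in H\setminus A$ has $\{b\in A: D(x+b)\}=P$; such $x$ exists because $H$ is infinite and $A$ is finite. Every $y\notin A$ realizing the same pattern $P$ then also lies in $H$.

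\textbf{Main step.} We want a generic $y$ with $y\in H$ and $y+h_0\in H$, since then $h_0=y+(y+h_0)\in H$, a contradiction. Consider the finite group $C:=\langle A,h_0\rangle$ and a new element $y\notin C$. The $D$-values on $y+C$ can be prescribed arbitrarily, because $D$ is an arbitrary subset containing $0$ and the coset $y+C$ is disjoint from $C$. The two requirements
\begin{itemize}
\item[] $y+A$ realizes the pattern $P$,
\item[] $(y+h_0)+A$ realizes the pattern $P$
\end{itemize}
involve the cosets $y+A$ and $y+h_0+A$. These are disjoint because $h_0\notin A$. (If $h_0\in A$, then membership of $h_0$ in $H$ is decided directly by the formula; in that case we instead choose $h_0$ generic, i.e.\ any element outside $A$ and outside $H$, which exists when $H\neq G$ since $A$ is finite.) So both requirements can be imposed simultaneously, and the extension property yields such a $y\in M$. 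By the previous step $y\in H$ and $y+h_0\in H$, so $h_0\in H$, contradicting the choice of $h_0$.

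The main obstacle is the bookkeeping in the quantifier-elimination description: checking that atomic formulas are only of the stated forms (all terms reduce to $x+b$ or to $b$ in exponent 2), and handling the degenerate case $h_0\in A$ by choosing $h_0$ outside $A$.
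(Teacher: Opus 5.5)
Your argument is correct and is essentially the paper's: quantifier elimination reduces membership in $H$ (off the finite parameter group) to a $D$-pattern on a coset of the parameter group, and the extension property lets you prescribe such patterns independently on two disjoint cosets, which breaks the closure of $H$ under addition. The only difference is how the two cosets are used. The paper realizes an $H$-pattern at $h_2$ and a non-$H$-pattern at $h_2+h_1$ for a generic $h_1\in H$. You instead realize the $H$-pattern at both $y$ and $y+h_0$ for a fixed $h_0\notin H\cup A$; such an $h_0$ exists because $G\setminus H$ is a union of infinite cosets of $H$, not merely because $A$ is finite.
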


\begin{clmproof}
Suppose $H$ is such a subgroup. By quantifier elimination, $H$ is the set of realizations of a formula 
$$\varphi(x)=\left(\bigvee_{a \in A} x=a\right) \vee \bigvee_{i <n} \left(\bigwedge_{j < j_i} x \ne a_{ij} \wedge   \bigwedge_{k <k_i}\bigwedge_{l <l_i} D(x-b_{ik}) \wedge \neg D(x-c_{il})\right),$$
for some finite subset $A$ of $G$, $n>0$, $j_i,k_i,l_i \geq 0$, and some elements $a_{ij},b_{ik},c_{il} \in G$. Since the complement of $H$ in $G$ is infinite, there is $g \in G$ such that for every $i$ either there is $k_i'<k_i$ with $\neg D(g-b_{ik_i'})$ or there is $l_i'<l_i$ with $D(g-c_{il_i'})$. For each $i <n$, let
$$\psi_i(x):=
\begin{cases}
\neg D(x-b_{ik_i'}) & \mbox{ if $k_i'$ exists},\\
D(x-c_{il_i'}) & \mbox{ if $k_i'$ does not exist (which implies that $l_i'$ exists)}.
\end{cases}$$
The element $g$ witnesses that $\bigwedge_{i<n} \psi_i(x)$ is consistent, which implies that 
$$\{b_{ik_i'}: i<n \textrm{ such that } k_i' \textrm{ exists}\} \cap \{c_{il_i'}:  i<n \textrm{ such that } k_i' \textrm{ does not exist}\} = \emptyset.$$

Since $H$ is infinite, there is $i_0<n$ such that 
$$\varphi_{i_0}(x):=\bigwedge_{k <k_{i_0}}\bigwedge_{l <l_{i_0}} D(x-b_{i_0k}) \wedge \neg D(x-c_{i_0l})$$
is consistent, equivalently $\{b_{i_0k}: k<k_{i_0}\} \cap \{c_{i_0l}:  l<l_{i_0}\} = \emptyset$.
As $H$ is infinite, there is $h_1 \in H \setminus \langle \{b_{ik},c_{il}: i<n,k<k_i,l<l_i\}\rangle$. 

The above context allows us to find $h_2 \in \varphi_{i_0}(G) \setminus \{a_{i_0 j}: j<j_{i_0}\}$ (and so $h_2 \in H$) such that $h_2 + h_1 \notin A$ and $h_2 +h_1 \in \bigcap_{i<n} \psi_i(G)$ (and so $h_2+h_1 \notin H$). Thus, $H$ is not closed under addition, a contradiction.
\end{clmproof}

Since $\bar{G}/\bar{G}^{00}_M$ is a compact group of exponent $2$, it must be profinite (e.g., see \cite[Theorem 28.20]{HeRo}). Thus, $\bar{G}^{00}_M=\bar{G}^0_M$ which is equal to $\bar{G}$ by Claim 1. 

Let now $H$ be any subgroup of $G$ of index $2$. By compactness, there is $g \in \bar{G}$ such that  for every $a \in G$ we have
$$\models D(g-a) \Leftrightarrow a \in H,$$
which shows that $H$ is externally definable. To get the stronger conclusion that $H \in \Def(G)^d$, take $p \in S_G(M)$ determined by 
$$D(x-a) \in p \Leftrightarrow a \in H.$$
Then $d_p(D):=\{g \in G: D \in g+p\} =\{g\in G: D(x+g) \in p\} =-H=H$, so $H \in \Def(G)^d$.

(2) The non-algebraic type $p \in S_G(M)$ containing $D(x-g)$ for all $g \in G$ is clearly $G$-invariant, so $(G,S_G(M),\tp(e/M))$ has DRP by Corollaries \ref{cor: amenability_derp}(1) and \ref{corollary: equivalences of RP for definable groups}(2).

By Corollary \ref{cor: amenability_derp}(3), the property EDRP for $(G,S_G(M),\tp(e/M))$ is equivalent to the existence of a $G$-invariant element in $E(S_{G}(M))$. By Fact \ref{fact: A^d}, the $G$-flow $E(S_G(M))$ is isomorphic to the $G$-flow $S(\Def(G)^d)$. But the finite index subgroup $H \in \Def(G)^d$ (which we have by item (1)) guarantees that there is no $G$-invariant ultrafilter in  $S(\Def(G)^d)$. All of this implies that $(G,S_G(M),\tp(e/M))$ does not have EDRP.

(3) Since the definable Bohr compactification of $G$ is the quotient map $G \to \bar{G}/\bar{G}^{00}_M$, item (1) shows that it is trivial. On the other hand, taking an externally definable subgroup $H$ of index $2$ provided by item (1), the quotient map $G \to G/H$ is a non-trivial externally definable compactification of $G$.
\end{proof}

\begin{example}
Let $(G,+)$ be an infinite group of exponent $2$, so naturally a vector space over $\mathbb{F}_2$. Let $P:=\mathcal{P}_{\textrm{fin}}(G)$ be the family of all finite subsets of $G$. We define a $2$-sorted structure $M$ as follows. The sorts are $(G,+)$ and $P$, and we add a predicate $R\subseteq G \times P$ defined by 
$$R(g,s) \Leftrightarrow g \in s.$$ 
Then $(G,S_G(M),\tp(e/M))$ has EDRP, whereas $(G,S_{G,ext}(M),\tp_{ext}(e/M))$ does not have EDRP.
\end{example}

\begin{proof}
First, note that each definable subset of the sort $G$ is finite or co-finite. Indeed, consider any finite set $A=A_1 \cup A_2$  of parameters from $M$, where $A_1 \subseteq G$ and $A_2 \subseteq P$. Let $A'$ be the subgroup of $G$ generated by $A_1 \cup \bigcup A_2$; it is a finite subgroup (so subspace) of $G$. It is clear that $\Aut(G/A')$ acts transitively on $G \setminus A'$. Since each subset of $G$ definable over $A$ is a union of orbits over $A$ and so also a union of orbits over $A'$, we get that it is finite (if it is contained in $A'$) or co-finite (if it meets $G \setminus A'$).

Therefore, $S_G(M)$ is the one-point compactification of the discrete space $G$ (we identify the realized types with the elements of $G$); denote the non-realized type by $\infty$.

Let $(g_i)$ be a net of elements of $G$ eventually outside every finite subset of $G$. Then $\lim g_i=\infty$ in $S_G(M)$. Now, let $\eta \in E(S_G(M))$ be the limit of the net $(g_i)$, where the $g_i$'s are treated as translates $\pi_{g_i}(p) := g_i +p$. Then $\eta$ is the constant function taking value $\infty$ which is clearly invariant under the action of $G$. Thus, $(G,S_G(M),\tp(e/M))$ has EDRP by Corollary \ref{cor: amenability_derp}.

On the other hand, by compactness, we easily get that every subset of $G$ is externally definable, so $S_{G,ext}(M) = \beta G$. 
Hence, any subgroup $H$ of $G$ of index $2$ guarantees that there is no $G$-invariant type $p \in S_{G,ext}(M)$. Thus, by Corollary \ref{cor: amenability_derp}, $(G,S_{G,ext}(M),\tp_{ext}(e/M))$ does not have WDRP, so DRP and EDRP either.
\end{proof}

The theory of $M$ in the above example has IP. Paper \cite{CPS} shows that many properties (e.g.\ definable amenability) are preserved under passing to the Shelah expansion. This naturally leads to the following question.

\begin{question}
Is it true that under NIP, 
$$(G,S_G(M),\tp(e/M)) \textrm{ has EDRP iff } (G,S_{G,ext}(M),\tp_{ext}(e/M)) \textrm{ has EDRP}?$$ 
By virtue of Corollaries \ref{corollary: DEERP iff EDERP under NIP for definable groups} and \ref{cor: amenability_derp}, the question is equivalent to: Is definable extreme amenability of $G$ preserved under passing to the Shelah expansion?
\end{question}

\subsection{KPT context}\label{subsection: KPT context}

Here, we apply the abstract context from Section \ref{section: main} to the universal ambits of groups of automorphisms of first order structures treated as topological groups with the pointwise convergence topology.

First, we recall a useful model-theoretic description of the universal ambits of such groups of automorphisms from \cite[Section 2]{KrPi2}. Let $M$ be a first order structure and $G:=\Aut(M)$ be its group of automorphisms equipped with the pointwise convergence topology. Let $\mathcal{M}$ be the structure consisting of two disjoint sorts $G$ and $M$ with predicates for all subsets of all finite Cartesian products of sorts; we call this language {\em full}. Note that the natural action of $G$ on $M$ is $\emptyset$-definable in $\mathcal{M}$, all elements of $\mathcal{M}$ are in $\dcl(\emptyset)$, and all $L$-definable subsets of the finite Cartesian powers of $M$ are $\emptyset$-definable in $\mathcal{M}$. Hence, $L$-formulas can naturally be identified with equivalent formulas from the full language. Types in this full language will be denoted by $\tp^{\textrm{full}}$. 
Let $\mathcal{M}^*=(G^*,M^*,\dots) \succ \mathcal{M}$ be a monster model (of the theory of $\mathcal{M}$). Then $G^*$ acts definably and faithfully as a group of automorphisms of $M^*$ treated as an $L$-structure. Enumerate $M$ as $\bar m$. Define
$$\Sigma^{\mathcal{M}} := \{\tp^{\textrm{full}} (\sigma(\bar m)): \sigma \in G^*\} = \{\tp^{\textrm{full}} (\sigma(\bar m)/\mathcal{M}): \sigma \in G^*\} .$$
%

\begin{fact}[{\cite[Theorem 2.2]{KrPi2}}]\label{fact: universal G-ambit}
The formula $g \cdot \tp^{\textrm{full}}(\sigma(\bar m)) := \tp^{\textrm{full}}(\sigma(g^{-1}(\bar m)))$ yields a well-defined left action of $G$ on $\Sigma^{\mathcal{M}}$, and with this action $(G,\Sigma^{\mathcal{M}}, \tp^{\textrm{full}}(\bar m))$ is the universal left $G$-ambit. In particular, this universal ambit is zero-dimensional.
\end{fact}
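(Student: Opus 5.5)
We have to verify that the action is well defined, that it is a continuous action by homeomorphisms of a compact $0$-dimensional space, that the orbit of $\tp^{\textrm{full}}(\bar m)$ is dense, and that the ambit is universal, i.e.\ that it maps onto every $G$-ambit via an ambit morphism.

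\textbf{Well-definedness.} Suppose $\tp^{\textrm{full}}(\sigma(\bar m))=\tp^{\textrm{full}}(\tau(\bar m))$. Since $g\in G\subseteq \dcl(\emptyset)$, the tuple $\sigma(g^{-1}(\bar m))$ is $\emptyset$-definable from $\sigma(\bar m)$. Concretely, its coordinate indexed by $m$ is the coordinate of $\sigma(\bar m)$ indexed by $g^{-1}(m)$, so it is a fixed re-indexing of $\sigma(\bar m)$. Hence equal types give equal types. The action axioms follow from the same re-indexing computation, using that $G^*$ acts by automorphisms of $M^*$. Also $\Sigma^{\mathcal M}$ is a closed subset of the type space $S_{\bar m}(\emptyset)$ in the full language. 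Indeed, it is the set of types of tuples $\bar x$ for which there exists $\sigma\in G^*$ with $\sigma(\bar m)=\bar x$, and this is type-definable by compactness, since each finite fragment is a formula. So $\Sigma^{\mathcal M}$ is compact and $0$-dimensional, and each $g$ acts by a homeomorphism (a re-indexing of variables).

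\textbf{Dense orbit and continuity.} A basic clopen is $[\varphi(\bar x_{\bar a})]$ for a finite subtuple $\bar a$ of $\bar m$, with $\varphi$ an arbitrary predicate of the full language. If it contains $\tp^{\textrm{full}}(\sigma(\bar m))$, then $\mathcal M^*\models\exists \tau\in G^*\,\varphi(\tau(\bar a))$. By elementarity such a $\tau$ exists in $G$, and $\tp^{\textrm{full}}(\tau(\bar m))=\tau^{-1}\cdot\tp^{\textrm{full}}(\bar m)$, which gives density. For continuity of $G\times\Sigma^{\mathcal M}\to\Sigma^{\mathcal M}$, we use that $g\cdot p\in[\varphi(\bar x_{\bar a})]$ depends only on $g^{-1}\restriction\bar a$, which is a basic open condition on $g$ in the pointwise convergence topology. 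So the action is jointly continuous, even uniformly in $p$.

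\textbf{Universality.} This is the main step. Let $(G,Y,y_0)$ be any $G$-ambit. We define $\Phi(\tp^{\textrm{full}}(\sigma(\bar m))):=\lim$ of $\tau^{-1}y_0$ along $\tau\in G$ with $\tp^{\textrm{full}}(\tau(\bar m))\to\tp^{\textrm{full}}(\sigma(\bar m))$. To see that the limit exists, we use uniform continuity of the orbit map of $y_0$. For each entourage $V$ of $Y$ there is an open subgroup $G_{\bar a}$ (a pointwise stabilizer of a finite $\bar a$) such that $h\in G_{\bar a}$ implies $(hy,y)\in V$ for all $y$, so in particular $(\tau^{-1}y_0,\tau'^{-1}y_0)\in V$ whenever $\tau\restriction\bar a=\tau'\restriction\bar a$. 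Then $\tau\mapsto\tau^{-1}y_0$ is uniformly continuous with respect to the left uniformity, and so is constant up to $V$ on the fibres of $\tau\mapsto\tau(\bar a)$.

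Since the language is full, for every $V$ and each closed $C\subseteq Y$ the set $\{\tau(\bar a):\tau^{-1}y_0\in C\}$ is a predicate. Thus $\Phi(p)$ can be defined as the unique point in $\bigcap\{C: \varphi_C\in p\}$, where $\varphi_C$ ranges over predicates of this form. Nonemptiness follows from compactness of $Y$ and finite satisfiability in $\mathcal M$. Uniqueness follows from the $V$-smallness of the fibres together with Hausdorffness.

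It remains to check three things. First, $\Phi$ is continuous, since preimages of closed sets are type-definable, hence closed. Second, $\Phi$ is $G$-equivariant, because on the dense orbit we have $\Phi(\tau^{-1}\tp^{\textrm{full}}(\bar m))=\tau^{-1}y_0$ and this extends by continuity. Third, $\Phi(\tp^{\textrm{full}}(\bar m))=y_0$. Surjectivity follows from compactness and density of $Gy_0$.

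The delicate point is the uniqueness of $\Phi(p)$. I would handle it by choosing, for each $V$, a finite cover of $Y$ by $V$-small closed sets and using the corresponding finite partition predicates, one of which lies in $p$. Zero-dimensionality of $\Sigma^{\mathcal M}$ then comes for free from its being a Stone space of types.
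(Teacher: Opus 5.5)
The paper gives no proof of this statement; it only quotes it from Krupi\'nski--Pillay, so your argument has to stand on its own. Most of it does. The ambit part is fine: well-definedness by re-indexing, closedness of $\Sigma^{\mathcal M}$, density via elementarity, and joint continuity. Your overall strategy for universality is also fine: extend $\tau^{-1}\cdot\tp^{\textrm{full}}(\bar m)\mapsto\tau^{-1}y_0$ from the dense orbit, using that this map varies by at most $V$ on the fibres of $\tau\mapsto\tau(\bar a)$.

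\textbf{The gap.} The concrete definition you then adopt does not work. You set $\Phi(p)$ to be the point of $\bigcap\{C:\varphi_C\in p\}$, with $\varphi_C=\{\tau(\bar a):\tau\in G,\ \tau^{-1}y_0\in C\}$. This fails at nonemptiness, not at uniqueness.
\begin{itemize}
\item Finite satisfiability only gives $\tau\in G$ with $\tau(\bar a_i)\in\varphi_{C_i}$ for each $i$. That means some $\tau_i$ with $\tau_i(\bar a_i)=\tau(\bar a_i)$ has $\tau_i^{-1}y_0\in C_i$. This puts $\tau^{-1}y_0$ in $V_i[C_i]$, not in $C_i$.
\item Concretely, take $p=\tp^{\textrm{full}}(\bar m)$. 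Then $\varphi_C\in p$ iff $C\cap G_{\bar a}y_0\neq\emptyset$. So if some $h\in G_{\bar a}$ moves $y_0$, both $\{y_0\}$ and $\{hy_0\}$ belong to your family and the intersection is empty. This happens, for example, for $Y=\Sigma^{\mathcal M}$ with $G$ non-discrete, since the orbit map is injective.
\item Restricting to the $C$'s from your finite $V$-small covers does not rescue it. At $p=\tp^{\textrm{full}}(\bar m)$ the selected piece need not contain $y_0$, so $\Phi(\tp^{\textrm{full}}(\bar m))=y_0$ is not guaranteed.
\end{itemize}

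\textbf{The repair.} Intersect closures of images instead of the sets $C$; this is just your first, limit-along-the-orbit definition made precise.
\begin{itemize}
\item For a formula $\psi$ in finitely many of the variables, put $D_\psi:=\overline{\{\tau^{-1}y_0:\tau\in G,\ \mathcal M\models\psi(\tau(\bar m))\}}$. Let $\Phi(p)$ be the point of $\bigcap_{\psi\in p}D_\psi$.
\item This family has the finite intersection property by your density argument. Also $\tau^{-1}y_0\in D_\psi$ for every $\psi\in\tp^{\textrm{full}}(\tau(\bar m))$.
\item For uniqueness and continuity, use your partition idea in this form. Given a closed entourage $V$, take a symmetric $W$ with $W\circ W\circ W\subseteq V$ and a finite $\bar a$ adapted to $W$. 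Cover $Y$ by $W$-small closed sets $C_1,\dots,C_k$. Partition the orbit $G\bar a$ into the predicates $\psi_i:=\varphi_{C_i}\setminus\bigcup_{j<i}\varphi_{C_j}$.
\item Since $p=\tp^{\textrm{full}}(\sigma(\bar m))$ with $\sigma\in G^*$, $p$ contains the predicate for $G\bar a$, hence exactly one $\psi_i$. Moreover $D_{\psi_i}\subseteq\overline{W[C_i]}$, which is $V$-small.
\end{itemize}
Consequently $\bigcap_{\psi\in p}D_\psi$ is a singleton. The map $\Phi$ sends the clopen set $[\psi_i]$ into a $V$-small set containing $\Phi(p)$, which gives continuity. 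Also $\Phi(\tau^{-1}\cdot\tp^{\textrm{full}}(\bar m))=\tau^{-1}y_0$, in particular $\Phi(\tp^{\textrm{full}}(\bar m))=y_0$. Equivariance and surjectivity then follow as you describe.
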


Let us now recall some basic notions from structural Ramsey theory. It is usually done for Fra\"{i}ss\'{e} classes and Fra\"{i}ss\'{e} structures, but let us look at a more general context of an arbitrary structure $M$ as in \cite{KrPi2}. 

For any tuple $\bar a$ in $M$ and $B \subseteq M$, by ${B \choose \bar a}$ we will mean the set $\{ \bar a' : \bar a' \subseteq B \;\, \mbox{and}\;\, \bar a'= f(\bar a)\;\, \mbox{for some}\;\, f \in \Aut(M)\}$; an analogous notation applies when $\bar a$ is replaced by a subset $A$ of $M$.

\begin{definition}\phantomsection\label{definition: Ramsey property}
\begin{enumerate}
\item We say that $M$ has the {\em embedding Ramsey property} (ERP) if for any finite tuple $\bar a$ in $M$ and a finite set $B\subseteq M$ containing $\bar a$, for any $r \in \omega$, there is a finite $C \subseteq M$ containing $B$ such that for every coloring $c\colon {C \choose \bar a} \to r$ there is $B' \in {C \choose B}$ such that ${B' \choose \bar a}$ is monochromatic with respect to $c$.
\item We say that $M$ has {\em separately finite embedding Ramsey degree} (sep.\ fin.\ ERdeg) if for any finite tuple $\bar a$ there exists $k_{\bar a} \in \omega$ such that for every finite $B \subseteq M$ containing $\bar a$ and for any $r \in \omega$ there is a finite $C\subseteq M$ containing $B$ such that for every coloring $c\colon {C \choose \bar a} \to r$ there is $B' \in {C \choose B}$ such that the set $c[{B' \choose \bar a}]$ is of size at most $k_{\bar a}$.
\end{enumerate}
\end{definition}
In the above definition, it is equivalent to replace a finite $C$ by the whole $M$ (see Remarks 3.1 and 6.1 in \cite{KrPi2}).

A fundamental result of KPT theory \cite{KPT} (see \cite[Theorem 3.2]{KrPi2} for arbitrary $M$) says that $M$ has ERP if and only if $G=\Aut(M)$ is extremely amenable (that is, $\Sigma^{\mathcal{M}}$ has a fixed point). Another fundamental result obtained in \cite[Theorem 8.7]{Zuc} (see also \cite[Theorem 6.3]{KrPi2} for an alternative proof) says that a countable $M$ has sep.\ fin.\ ERdeg if and only if the universal minimal $G$-flow is metrizable.

\begin{remark}\label{remark: all RP agree in Sigma^M}
All three properties WDRP, DRP, EDRP of the ambit $(G,\Sigma^{\mathcal{M}},\tp^{\textrm{full}}(\bar m))$ are equivalent. They are also equivalent to $M$ having the ERP.
\end{remark}

\begin{proof}
The first part follows from Corollary \ref{corollary: equivalence of all RP for d-closed algebras}, because on the universal $G$-ambit there is a left continuous semigroup operation extending the action of $G$. The second part follows immediately from the same dynamical characterizations of WDRP of $(G,\Sigma^{\mathcal{M}},\tp^{\textrm{full}}(\bar m))$ and of ERP of $M$, namely the existence of a fixed point in $\Sigma^{\mathcal{M}}$, provided by Corollary \ref{cor: amenability_derp} and \cite[Theorem 3.2]{KrPi2}.
\end{proof}

It is more insightful to see the above remark more directly in terms of colorings. This approach will be used to compare the versions of the notion of sep.\ fin.\ Ramsey degree.

\begin{remark}\label{remark: the notions of colorings are the same for Sigma^M}
The notions of strongly definable, definable, and externally definable coloring coincide for the ambit $\Sigma^{\mathcal{M}}$. This implies the first sentence of Remark \ref{remark: all RP agree in Sigma^M}.
\end{remark}

\begin{proof}
An externally definable coloring $c \colon G \to 2^n$ with respect to $\Sigma^{\mathcal{M}}$ is given by
$$c(g)(i)= \left\{
\begin{array}{cl}
1, & \models \varphi_i(g_i g (\bar m))\\
0, & \models \lnot\varphi_i(g_i g (\bar m))
\end{array}
\right.$$
for some formulas $\varphi_i(\bar x)$ in the full language and some $g_i \in G^*$. Although $g_i$ is an external parameter, the set of realizations of the formula $\varphi_i(g_i (\bar x))$ in $M$ is definable in $M$ by a formula $\varphi_i'(\bar x)$ in the full language. Then
$$c(g)(i)= \left\{
\begin{array}{cl}
1, & \models \varphi_i'(g (\bar m))\\
0, & \models \lnot\varphi_i'(g (\bar m))
\end{array}
\right.,$$
so $c$ is strongly definable.
\end{proof}

As in the context of theories from Subsection \ref{subsection: first order theories}, there is a correspondence between arbitrary colorings of the $G$-copies of a finite tuple from $M$ and strongly definable colorings of elements of $G$ with respect to the $G$-ambit $\Sigma^{\mathcal{M}}$. 

Namely, for a given finite tuple $\bar a$ from $M$ (where without loss of generality the coordinates of $\bar a$ are pairwise distinct) consider a coloring  $c \colon {M \choose \bar a} \to n$. Then, for every $i<n$,  $c^{-1}(i) = \varphi_i(M)$ for some formula $\varphi_i(\bar x')$ in the full language. Let $\bar x \supseteq \bar x'$ be a tuple of variables corresponding to $\bar m$. We obtain a strongly definable coloring $c' \colon G \to 2^n$ given by
$$c'(g)(i)= \left\{
\begin{array}{cll}
1, & \models \varphi_i(g (\bar a)) & (\Leftrightarrow \tp(\bar m) \in g[\varphi_i(\bar x)])\\
0, & \models \lnot\varphi_i(g (\bar a)) & (\Leftrightarrow \tp(\bar m) \notin g[\varphi_i(\bar x)])
\end{array}
\right..$$  

Conversely, start from a strongly definable coloring $c' \colon G \to 2^n$ given by 
$$c'(g)(i)= \left\{
\begin{array}{cll}
1, & \models \tp(\bar m) \in g[\varphi_i(\bar x)]\\
0, & \models \tp(\bar m) \notin g[\varphi_i(\bar x)]
\end{array}
\right.$$  
for some formulas $\varphi_i(\bar x)$. Let $\bar a$ be a finite tuple in $M$ corresponding to the finitely many variables used in $\varphi_0(\bar x),\dots,\varphi_{n-1}(\bar x)$. This  yields a coloring $c \colon  {M \choose \bar a} \to 2^n$ given by 
$$c(g(\bar a))(i)= \left\{
\begin{array}{cll}
1, & \models \varphi_i(g (\bar a))\\
0, & \models \lnot\varphi_i(g (\bar a)) 
\end{array}
\right..$$  

From these translations, the second part of Remark \ref{remark: all RP agree in Sigma^M} follows easily.

While in the context of $(G,\Sigma^{\mathcal{M}},\tp^{\textrm{full}}(\bar m))$ all the Ramsey properties agree and Corollary \ref{cor: amenability_derp} recovers the fundamental theorem of KPT theory, the situation for Ramsey degrees seems different regarding the notion of sep.\ fin.\  EDRdeg. But first we briefly discuss joint.\ fin.\ EDRdeg.

Recall that the notion of joint.\ fin.\ EDRdeg with respect to a family $\Sigma$ of clopen subsets of an ambit was introduced in Definition \ref{definition: joint. fin. EDERdeg with respect to Sigma}. The notions of joint.\ fin.\ DRdeg and joint.\ fin.\ WDRdeg with respect to $\Sigma$ are defined analogously (using definable and strongly definable $\Sigma$-colorings, respectively).

\begin{remark}\label{remark: sep. fin. ERdeg vs joint. fin. EDErdeg}
Let $\bar{x}'$ be a finite subtuple of the infinite tuple $\bar x$ of variables corresponding to $\bar m$. Let $\Sigma_{\bar{x}'}$ be the collection of all clopens in $\Sigma^{\mathcal{M}}$ of the form $[\varphi(\bar{x}')]$, where $\varphi(\bar{x}')$ ranges over the formulas in the full language in variables $\bar{x}'$. Then joint.\ fin.\ EDRdeg, joint.\ fin.\ DRdeg, and joint.\ fin.\ WDRdeg, all with respect to $\Sigma_{\bar{x}'}$, are all equivalent. And the property that these equivalent conditions hold for all finite $\bar{x}' \subseteq \bar x$ is equivalent to $M$ having sep.\ fin.\ ERdeg.
\end{remark}

\begin{proof}
The equivalence of the three notions follows via an obvious analog of Remark \ref{remark: the notions of colorings are the same for Sigma^M} for $\Sigma_{\bar{x}'}$-colorings. The last part follows from the translations explained after Remark \ref{remark: the notions of colorings are the same for Sigma^M}.
\end{proof}

Using Remark \ref{remark: sep. fin. ERdeg vs joint. fin. EDErdeg}, observe that, for a countable structure $M$, Corollary \ref{corollary: criterion for metrizability} applied to the countable cofinal, upward directed family $\mathcal{D}:=\{\Sigma_{\bar{x}'}: \bar{x}' \subseteq \bar{x} \textrm{ finite}\}$ of collections of clopens in $\Sigma^{\mathcal{M}}$ implies the aforementioned Zucker's theorem (see \cite[Theorem 8.7]{Zuc} and \cite[Theorem 6.3]{KrPi2}) saying that if $M$ has sep.\ fin.\ ERdeg, then the universal minimal $\Aut(M)$-flow is metrizable.

The main new notion in Section \ref{section: main} is that of sep.\ fin.\ EDRdeg with a dynamical characterization provided by Corollary \ref{corollary: characterization_sep_fin_EDERdeg} and main consequences obtained in Corollaries \ref{corollary: Ellis group is profinite}, \ref{corollary: sep. fin. EDERdeg implies profiniteness of distal factors}, and Proposition \ref{proposition: relationships}. This notion applied to the ambit $(G,\Sigma^{\mathcal{M}}, \tp^{\textrm{full}}(\bar m))$ seems strictly weaker than $M$ having sep.\ fin.\ ERdeg, but we do not know an example showing strictness. The point is that by Remark \ref{remark: sep. fin. ERdeg vs joint. fin. EDErdeg}, sep.\ fin.\ ERdeg for $M$ is equivalent to joint.\ fin.\ EDRdeg with respect to every $\Sigma_{\bar{x}'}$ (with $\bar{x}' \subseteq \bar{x}$ finite), whereas sep.\ fin.\ EDRdeg means joint.\ fin.\ EDRdeg with respect to every finite subfamily of every $\Sigma_{\bar{x}'}$. 

\begin{problem}\label{problem: sep. fin EDERdeg and sep. fin ERdeg}
Find an example (or prove there is no one) showing that sep.\ fin.\ EDRdeg for $(G,\Sigma^{\mathcal{M}}, \tp^{\textrm{full}}(\bar m))$ does not imply that $M$ has sep.\ fin.\ ERdeg.
\end{problem}

Corollaries \ref{corollary: Ellis group is profinite} and \ref{corollary: sep. fin. EDERdeg implies profiniteness of distal factors} specialize to the following result.

\begin{corollary}\label{corollary: main new result in KPT context}
If $(\Aut(M),\Sigma^{\mathcal{M}}, \tp^{\textrm{full}}(\bar m))$ has sep.\ fin.\ EDRdeg, then the Ellis group of this ambit is profinite and every distal minimal  $\Aut(M)$-flow is a profinite flow (i.e.\ an inverse limit of finite flows). In particular, if $M$ has sep.\ fin.\ ERdeg (which for countable $M$ is equivalent to metrizability of the universal minimal $\Aut(M)$-flow), then every distal minimal $\Aut(M)$-flow is a profinite flow.
\end{corollary}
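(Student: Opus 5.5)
The first assertion is a direct instance of Corollary \ref{corollary: Ellis group is profinite}, applied to the $0$-dimensional ambit $(G,\Sigma^{\mathcal{M}},\tp^{\textrm{full}}(\bar m))$ with $G=\Aut(M)$; Fact \ref{fact: universal G-ambit} guarantees that this ambit is $0$-dimensional. The work is in the second assertion, where the conclusion concerns \emph{every} distal minimal $G$-flow rather than only the distal minimal factors of a fixed ambit.

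To reach all of them I would invoke universality (Fact \ref{fact: universal G-ambit}). Let $(G,Y)$ be a distal minimal flow and pick any $y_0\in Y$. By minimality, $(G,Y,y_0)$ is an ambit, so there is an ambit epimorphism $(G,\Sigma^{\mathcal{M}},\tp^{\textrm{full}}(\bar m))\to (G,Y,y_0)$. Hence $Y$ is a distal minimal factor of $(G,\Sigma^{\mathcal{M}})$, and Corollary \ref{corollary: sep. fin. EDERdeg implies profiniteness of distal factors} shows that $Y$ is a profinite flow.

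One point needs checking: that Corollary \ref{corollary: sep. fin. EDERdeg implies profiniteness of distal factors} is valid for a topological rather than discrete $G$. Its proof goes through Fact \ref{fact: prof. Ellis group gives prof. flow}, which uses only Ellis theory. Ellis theory does not see the topology of $G$, and continuity of the action is inherited by factors. I expect this verification to be the only mildly delicate step. I would also need that the Ellis group of the factor $Y$ is a continuous image of the profinite Ellis group, and hence profinite. I would simply cite the proof of Corollary \ref{corollary: sep. fin. EDERdeg implies profiniteness of distal factors} for this.

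For the ``in particular'' clause, the claim to prove is that sep.\ fin.\ ERdeg of $M$ implies sep.\ fin.\ EDRdeg of the ambit. By Remark \ref{remark: sep. fin. ERdeg vs joint. fin. EDErdeg}, sep.\ fin.\ ERdeg of $M$ gives joint.\ fin.\ EDRdeg with respect to every $\Sigma_{\bar x'}$. Now take any finite $\Sigma\subseteq\CB$. Every clopen of $\Sigma^{\mathcal{M}}$ has the form $[\varphi(\bar x')]$ for some finite $\bar x'$, so $\Sigma\subseteq\Sigma_{\bar x'}$ for a single finite $\bar x'$. Then the bound $l$ for $\Sigma_{\bar x'}$ also works for all externally definable $\Sigma$-colorings, which is exactly sep.\ fin.\ EDRdeg; the first part then applies. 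Finally, the parenthetical equivalence with metrizability of the universal minimal flow for countable $M$ is Zucker's theorem \cite[Theorem 8.7]{Zuc}, which I would cite rather than prove.
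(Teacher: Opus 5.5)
Your proposal is correct and follows the paper, which obtains this corollary by specializing Corollaries~\ref{corollary: Ellis group is profinite} and~\ref{corollary: sep. fin. EDERdeg implies profiniteness of distal factors} to the universal ambit, with universality (Fact~\ref{fact: universal G-ambit}) making every distal minimal $\Aut(M)$-flow a factor of it. The paper derives the ``in particular'' clause exactly as you do, from Remark~\ref{remark: sep. fin. ERdeg vs joint. fin. EDErdeg} together with the fact that every finite family of clopens lies in a single $\Sigma_{\bar x'}$. One small correction to your aside: profiniteness does not pass to a factor's Ellis group merely because that group is a $\tau$-continuous image, since the $\tau$-topology is only $T_1$. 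What is really needed is that the induced epimorphism of Ellis groups is a $\tau$-quotient map; this does not affect your argument, because you defer that step to the paper's corollary.
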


Regarding examples, for every structure $M$ from the huge family of Fra\"{i}ss\'{e} structures with finite ERdeg we clearly have that $(G,\Sigma^{\mathcal{M}}, \tp^{\textrm{full}}(\bar m))$ has sep.\ fin.\ EDRdeg. A simple non-example is easily obtained from Example \ref{example: not sep. fin. DERdeg}:

\begin{example}
For $M:=(\mathbb{Z},\leq)$ the ambit $(G,\Sigma^{\mathcal{M}}, \tp^{\textrm{full}}(\bar m))$ does not have sep.\ fin.\ EDRdeg (it does not even have sep.\ fin.\ DRdeg).
\end{example}

\begin{proof}
$\Aut(M)$ consists of integer shifts, so is isomorphic to $(\mathbb{Z},+)$. Since it is countable and Polish (as a closed subgroup of $S_\infty$), it must be discrete. Thus $\Sigma^{\mathcal{M}} = \beta G$, and the conclusion follows from Example \ref{example: not sep. fin. DERdeg}.
\end{proof}

\section{Comments, questions, and further examples}\label{section: questions and examples}

Regarding the conditions in Proposition \ref{proposition: relationships}, in the context of first order theories from \cite{KLM} (see also Subsection \ref{subsection: first order theories}), we do not know any examples which would show that $(A) \not\Rightarrow (B)$ and that $(C) \not\Rightarrow (D)$; similarly in the context of definable groups from Subsection \ref{subsection: definable groups} and in the classical KPT context (discussed in Subsection \ref{subsection: KPT context}). Regarding  $(C) \Rightarrow (D)$, we do not know any counterexample even in the abstract context.

\begin{question}
Does $(C)$ imply $(D)$ in Proposition \ref{proposition: relationships}.
\end{question}

However, in the general abstract context from Proposition \ref{proposition: relationships}, we give an example showing that $(A) \not\Rightarrow (B)$. Before that let us briefly describe a background concerning Toeplitz flows. The relevant notions and facts can be found in \cite{Dow} or in the preliminaries in \cite{SeSi} and in the references there.

Let $s=(s_i)$ be a sequence of positive integers such that $s_{i+1} \mid s_i$ for all $i$. The {\em $s$-odometer} is the profinite space of $s$-adic integers $\invlim_{i} \mathbb{Z}/s_i\mathbb{Z}$ with the action of $\mathbb{Z}$ given by $n \cdot x:= n+x$. By a {\em 2-adic odometer} we mean the $s$-odometer for $s_i=2^i$, i.e.\ $Z_2:= \invlim_i \mathbb{Z}/2^i \mathbb{Z}$. Every odometer is a minimal equicontinuous flow.

Let $A$ be a finite set (in our example, $A:=\{0,1\}$ is enough). A {\em subshift} over $A$ is any $\mathbb{Z}$-subflow of the shift $\mathbb{Z}$-flow $A^{\mathbb{Z}}$. It is easy to see that an infinite subshift is never equicontinuous. A {\em Toeplitz sequence} is a sequence $\omega \colon \mathbb{I} \to A$ (where $\mathbb{I}= \mathbb{N}$ or $\mathbb{I}= \mathbb{Z}$) satisfying: for every $n \in \mathbb{I}$ there is $l \in \mathbb{N}$ such that for every $k \in \mathbb{N}$ we have $\omega(n)=\omega(n + kl)$. A {\em Toeplitz flow} is the subflow of $A^I$ generated by a Toeplitz sequence. It turns out that each Toeplitz flow is minimal  and is an almost 1-1 extension of an odometer which is its maximal equicontinuous factor (where recall that $X$ is an {\em almost 1-1} extension of $Y$ if there an epimorphism $\pi \colon X \to Y$ such that the set $\{ y \in Y: |\pi^{-1}(y)|=1\}$ is dense in $Y$). If $Z$ is a factor of a Toeplitz flow $X$, then $Z$ is an almost 1-1 extension of a factor of the maximal equicontinuous factor of $X$. A Toeplitz flow will be  called {\em proper} if it is not equicontinuous. Since infinite subshifts are never equicontinuous, each Toeplitz flow generated by an aperiodic Toeplitz sequence is proper. An example of an aperiodic Toeplitz sequence generating a Toeplitz flow with maximal equicontinuous factor being the $2$-adic odometer $Z_2$ can be found in \cite[Section 10]{Dow}; the obtained Toeplitz flow is therefore proper.

\begin{example}\label{example: (A) does not imply (B)}
Let $X$ be a proper Toeplitz flow which is an almost 1-1 extension of the odometer $Z_2:=\invlim_i \mathbb{Z}/2^i \mathbb{Z}$ (naturally treated as a $\mathbb{Z}$-flow). Then the Ellis group of $(\mathbb{Z},X)$ is the group $Z_2$ of $2$-adic integers, whereas $X$ is not isomorphic to an inverse limit $\invlim_j X_i$ of $\mathbb{Z}$-flows $X_i$ with finite Ellis groups. 
\end{example}

\begin{proof}
Since the natural embedding of $\mathbb{Z}$ into $Z_2$ is a group compactification of $\mathbb{Z}$, we get that the Ellis semigroup $E(Z_2)$ is a compact group topologically isomorphic to the profinite group $Z_2$. Since  $E(Z_2)$ is a compact group, it coincides (as a topological group) with the Ellis group of the odometer $Z_2$ (e.g.\ by Lemmas 3.11 and 3.12 in the first arXiv version of \cite{KLM}). Thus, the Ellis group of the odometer $Z_2$ is profinite. Therefore, profiniteness of the Ellis group of the flow $(\mathbb{Z},X)$ follows from \cite[Lemma 5.7]{CGK}, which tells us that the Ellis group of a minimal flow is preserved under taking minimal almost 1-1 extensions.

Now, suppose for a contradiction that $X \cong \invlim_j X_j$, where each $X_j$ has finite Ellis group. Since each $X_j$ is factor of $X$, it is an almost 1-1 extension of a factor $Y_j$ of $Z_2$. 
Since the Ellis group of $Y_j$ is finite and $Y_j$ is equicontinuous and minimal, we deduce that the Ellis semigroup $E(Y_j)$ is finite and so is $Y_j$ as a quotient of $E(Y_j)$. Hence, as $(\mathbb{Z},Y_j)$ is minimal, we conclude the action of $\mathbb{Z}$ on $Y_j$ is transitive. Therefore, since $X_j$ is an almost 1-1 extension of $Y_j$, we get that it is a 1-1 extension, so $X_j$ is finite. Thus, $X$ is a profinite flow, and so it is equicontinuous. This contradicts the assumption that $X$ was a proper Toeplitz flow.
\end{proof}

Proposition \ref{proposition: relationships} yields a Ramsey-theoretic criterion (namely sep.\ fin.\ EDRdeg) for profiniteness of the Ellis group of $X$ and in consequence for profiniteness of the Bohr compactification below $(G,X,x_0)$. But it is not a characterization (i.e.\ an equivalent condition) which is illustrated by Example \ref{example: (A) does not imply (B)}. It would be interesting to find Ramsey-theoretic characterizations of profiniteness of the Ellis group of $(G,X)$ and also of the Bohr compactification below $(G,X,x_0)$. This would be particularity important in the context of first order theories, where (as will be shown in a forthcoming paper of the first author with Daniel Hoffmann) the Bohr compactification below $(\Aut(\C),S_{\bar c}(\C),\tp(\bar c /\C))$ is precisely $\Gal_{KP}(T)$. It would be a combinatorial characterization of equality of Shelah and Kim-Pillay strong types, which could shed new light on a long-standing conjecture that this equality holds in simple theories. Regarding first order theories, we do not know whether sep.\ fin.\ EDRdeg is a characterization of profiniteness of the Ellis group or only a criterion, as we do not know whether $(A)$ implies $(B)$ and whether $(C)$ implies $(D)$. But we know that profiniteness of $\Gal_{KP}(T)$ is strictly weaker than profiniteness of the Ellis group, as is shown by Example 5.11 in \cite{KLM}.

Regarding the relationships  between EDRP, DRP, WDRP, by Example \ref{example: WDERP does not imply DERP},  WDRP $\not\Rightarrow$ DRP, although both these notions are equivalent in all three specializations considered in this paper. The implication DRP $\Rightarrow$ EDRP does not hold for theories and for definable groups (by \cite[Example 5.7]{KLM} and Example \ref{example: DERP but not EDERP fr definable groups}), but it holds in the KPT context. However, assuming NIP, it holds for theories (by Proposition \ref{proposition: under NIP DERP iff EDERP for theories}) and also for definable groups (by Corollary \ref{corollary: DEERP iff EDERP under NIP for definable groups}). 
It is well-known (first observed independently by Chernikov and Simon and by Ibarlucia) that NIP corresponds to tameness in topological dynamics. A precise statement of this form (see e.g.\ \cite[Corollary 5.8]{KrRz}) is that a theory $T$ has NIP if and only if the flow $(\Aut(\C), S_{\bar c}(\C))$ is tame. So this suggests a question whether  tameness is a sufficient assumption for the implication DRP $\Rightarrow$ EDRP.
It turns out that there exists a counterexample 
(which we skip, as it has been recently found with a help of AI during the work on \cite{HoKr})
 In Corollary \ref{cor: amenability_derp} and Remark \ref{remark: fixed points in powers}, we got the following dynamical characterizations of EDRP of an arbitrary $0$-dimensional ambit $(G,X,x_0)$:

\begin{enumerate}
\item Every subflow of $X$ has a fixed point (which we could call {\em hereditary extreme amenability} of (G,X)).
\item Every subflow of every finite Cartesian power $X^n$ has a fixed point (i.e., $X^n$ is hereditarily extremely amenable).
\item $E(X)$ has a fixed point (i.e., $E(X)$ is extremely amenable).
\end{enumerate} 

While these three conditions are equivalent without the tameness assumption, it is interesting to remark that, by the very recent preprint \cite{HoKr} (whose results were obtained much later than all the results of this paper), when extreme amenability is replaced by amenability, then the last two items above remain equivalent in general but the first two do not, and one of the main results in \cite{HoKr} says that they are equivalent under tameness. 

By Remark \ref{remark: equivalent dynamical characterizations of EDERP} and Corollary \ref{cor: amenability_derp}, EDRP is a Ramsey-theoretic characterization of triviality of the minimal left ideals in $E(X)$, and so it is a criterion for triviality of the Ellis group of $(G,X)$ and of the Bohr compactification below $(G,X,x_0)$. It would be interesting to find characterizations of each of the last two properties. In \cite{The}, there is a Ramsey-theoretic characterization of triviality of the Bohr compactification in the KPT context. This kind of result for first order theories could be very interesting, as it would be a Ramsey-theoretic characterization of equality of Kim-Pillay types and complete types. It is not hard to give strongly related examples in the context of first order theories and in the KPT context showing that triviality of the Ellis group does not not even imply DRP. Namely, in the context of theories, take $T$ to be the theory of the unit complex circle equipped with the ternary relation of the circular order. $T$ has quantifier elimination, NIP, and $\emptyset$ is not an extension base for forking. Thus, $T$ is not amenable by \cite[Corollary 2.24]{HKP}, so not extremely amenable, hence DRP fails. On the other hand, it is easy to see that there is $\eta \in E(S_{\bar c}(\C))$ with a singleton image (cf. \cite[Proposition A.1]{KrRz}), which implies that the Ellis group in $E(S_{\bar c}(\C))$ is trivial. In the KPT context, one can take as $M$ the countable dense circular order. Then $\Aut(M)$ is not extremely amenable, because finite substructures of $M$ are not rigid. On the other hand, using \cite[Theorem 5]{The0}, it is easy to compute that the universal minimal $\Aut(M)$-flow is just the natural action of $\Aut(M)$ on the space $\widehat{M}$ of all cuts of $M$ equipped with the $0$-dimensional topology taken from the space of liner orders on $M$ compatible with the circular order. A similar argument to the one for theories yields an element of the Ellis semigroup of $\widehat{M}$ with singleton image, so the Ellis group of the flow $\widehat{M}$ is trivial. Finally, this last trivial Ellis group coincides with the Ellis group of the universal $\Aut(M)$-ambit by \cite[Lemma 5.16]{CGK}.  

Some questions on relationships between the various  versions of finite Ramsey degrees already appeared in earlier sections, see  Questions \ref{question: sep. fin. EDERdeg for beta G}, \ref{question: sep. fin DERdeg vs sep. fin. EDERdeg}, and Problem \ref{problem: sep. fin EDERdeg and sep. fin ERdeg}. Among other problems, let us mention a strengthening of Question  \ref{question: sep. fin DERdeg vs sep. fin. EDERdeg}: Does DRP imply sep.\ fin.\ EDRdeg? We expect negative answers.

\section*{No AI statement}
No AI was involved in this paper. The project was initiated in 2019, after completing \cite{KLM}. The results were obtained between 2019 and 2025 and discussed by the first author in several conferences and seminars in those years.

\printbibliography

\end{document}